\newtheorem{thm}{Theorem}[subsection]
\newtheorem*{thmszn}{Theorem} %számozatlan tétel környzet
\newtheorem{cor}{Corollary}[subsection]
\newtheorem{rem}{Remark}[subsection]
\newtheorem{Opp}{Open Problem}
\numberwithin{equation}{subsection}
\title[Some functional equations ...]{Some functional equations related to the characterizations of information measures and their stability}
\author[E.~Gselmann]{Eszter Gselmann}
\address{
Department of Analysis\\
Institute of Mathematics\\
University of Debrecen\\
P. O. Box: 12.\\
Debrecen\\
H--4010\\
Hungary}
\email{gselmann@science.unideb.hu}
\author[Gy.~Maksa]{Gyula Maksa}
\address{
Department of Analysis\\
Institute of Mathematics\\
University of Debrecen\\
P. O. Box: 12.\\
Debrecen\\
H--4010\\
Hungary}
\email{maksa@science.unideb.hu}
\thanks{This research has been supported by the Hungarian Scientific Research Fund (OTKA)
Grant NK 814 02 and by the T\'{A}MOP 4.2.1./B-09/1/KONV-2010-0007 project implemented
through the New Hungary Development Plan co-financed by the European Social Fund and
the European Regional Development Fund.}
\subjclass[2010]{Primary 39B82; Secondary 94A17}
\keywords{Stability, superstability,
parametric fundamental equation of information, entropy of degree alpha, Shannon entropy, Tsallis entropy,
entropy equation}
\date{\today}
\begin{document}
%\nocite{*}
\maketitle

\begin{abstract}
The main  purpose of this paper is to investigate the stability problem of
some functional equations that appear in the characterization
problem of information measures.
\end{abstract}

%\tableofcontents

\section{Introduction and preliminaries}

Throughout this paper
$\mathbb{N}, \mathbb{Z}, \mathbb{Q}, \mathbb{R}$ and $\mathbb{C}$
will stand for the set of the positive integers, the integers,
the rational numbers, the reals and the set of the complex numbers, respectively.
Furthermore,
$\mathbb{R}_{+}$ and $\mathbb{R}_{++}$ will denote the
set of the nonnegative and the positive real numbers, respectively.

In this section, firstly we summarize some notations and preliminaries
that will be used subsequently.
We begin with the introduction of the information measures. Here their definition and
some results concerning them will follow.

The second section of our paper will be devoted to the
topic of information functions. Here -- among others -- the general solution of the
(parametric) fundamental equation of information will be described.
Furthermore, some results concerning the so-called sum form information measures will also be listed.

Finally, in the last part of this paper, we will investigate the stability problem
for the functional equations that appeared in the second section.
Here some open problems will also be presented.

\subsection{Information measures}

The question 'How information can be measured?', was firstly raised by Hartley in 1928.
In his paper \cite{Har28}, Hartley considered only those systems of events, in which
every event occurs with the same probability.
After that, in 1948 the celebrated paper of Shannon \cite{Sha48} appeared where the
information quantity contained in a complete (discrete) probability distribution was defined.

In what follows, based on the notions and the results of the monograph
Acz\'{e}l--Dar\'{o}czy \cite{AczDar75}, a short introduction to
information measures will follow.

Let $n\in\mathbb{N}$, $n\geq 2$ be arbitrarily fixed and define the sets
\[
\Gamma^{\circ}_{n}=
\left\{
(p_{1}, \ldots, p_{n})\in\mathbb{R}^{n} \vert p_{i}>0, i=1, \ldots, n, \sum^{n}_{i=1}p_{i}=1
\right\}
\]
and
\[
\Gamma_{n}=
\left\{
(p_{1}, \ldots, p_{n})\in\mathbb{R}^{n} \vert
p_{i}\geq 0, i=1, \ldots, n, \sum^{n}_{i=1}p_{i}=1
\right\},
\]
respectively. We say that the sequence of functions $\left(I_{n}\right)_{n=2}^{\infty}$ (or simply $\left(I_{n}\right)$) is an
\emph{information measure}, if either
$I_{n}\colon \Gamma^{\circ}_{n}\to\mathbb{N}$ for all $n\geq 2$ or
$I_{n}\colon \Gamma_{n}\to\mathbb{N}$ for all $n\geq 2.$

We have to mention that, in the literature, 'information measures' depending on not only
probabilities but on the events themselves (inset information measures) (see e.g Acz\'el--Dar\'oczy \cite{AczDar78})
or depending on several probability distributions (see Ebanks--Sahoo--Sander \cite{EbaSahSan97}) are also investigated. Here
we do not involve these cases. On the other hand, originally the zero probabilities were allowed adopting the conventions

\begin{equation}\label{Eq1.1.1}
0\log_{2}(0)=\frac{0}{0+0}=0^{\alpha}=0 \qquad (\alpha\in\mathbb{R})
\end{equation}
in the formulas. In this paper, we follow these conventions and we denote $\Gamma^{\circ}_{n}$ or $\Gamma_{n}$  by $\mathcal{G}_{n}$
provided that it does not matter that the zero probabilities are excluded or not.

Certainly, the most common information measures are the
\emph{Shannon entropy} (see Shannon \cite{Sha48}), i.e.,
\[
H^{1}_{n}(p_{1}, \ldots, p_{n})=-\sum^{n}_{i=1}p_{i}\log_{2}(p_{i}),
\quad
\left((p_{1}, \ldots, p_{n})\in\mathcal{G}_{n}, n\geq 2\right)
\]
and the so-called \emph{entropy of degree $\alpha$}, or the
\emph{Havrda--Charv\'{a}t entropy} (see Acz\'{e}l--Dar\'{o}czy \cite{AczDar75},
Dar\'{o}czy \cite{Dar70}, Kullback \cite{Kul59}, Tsallis \cite{Tsa88}), i.e.,
\[
H^{\alpha}_{n}(p_{1}, \ldots, p_{n})=\left\{
\begin{array}{lcl}
\left(2^{1-\alpha}-1\right)^{-1}\left(\sum^{n}_{i=1}p_{i}^{\alpha}-1\right), &\text{if}& \alpha\neq 1 \\
H^{1}_{n}(p_{1}, \ldots, p_{n}), &\text{if}& \alpha=1
\end{array}
\right.,
\]
where $n\in\mathbb{N},\,\, n\geq 2, \,\, \alpha\in\mathbb{R}\,\,$ and
$(p_{1}, \ldots, p_{n})\in\mathcal{G}_{n}$.

$(H_{n}^{1})$ was first introduced to the statistical
thermodynamics by Boltzmann and Gipps, to the information theory by Shannon \cite{Sha48},
while $(H_{n}^{\alpha})$\,\,(for $\alpha\neq 1$) was first investigated from cybernetic point of view
by Havrda and Charv\'at \cite{HavCha67}, from information theoretical point of view by Dar\'oczy \cite{Dar70},
and was rediscovered by Tsallis \cite{Tsa88} for the Physics community.

It is easy to see that, for arbitrarily fixed $n\geq 2$ and
$(p_{1}, \ldots, p_{n})\in\mathcal{G}_{n}$,
\[
\lim_{\alpha\rightarrow 1}
H^{\alpha}_{n}\left(p_{1}, \ldots, p_{n}\right)=
H^{1}_{n}\left(p_{1}, \ldots, p_{n}\right),
\]
which shows that the Shannon entropy can continuously be embedded to the
family of entropies of degree $\alpha$. As it is formulated in \cite{AczDar75}, the \emph{characterization problem}
for the information measure $(H^{\alpha}_{n})$ is the following: What properties have to be imposed upon an
information measure $(I_{n})$ in order that $(I_{n})=(H^{\alpha}_{n})$ be valid.

In what follows, we list the properties which seem to be reasonable for characterizing $(H^{\alpha}_{n})$. It is not difficult to check
that the information measure $(H_{n}^{\alpha})$ has these properties.

An information measure  $\left(I_{n}\right)$ is called
\emph{symmetric} if
\begin{equation}\label{Eq1.1.2}
I_{n}\left(p_{1}, \ldots, p_{n}\right)=
I_{n}\left(p_{\sigma(1)}, \ldots, p_{\sigma(n)}\right)
\end{equation}
is satisfied for all  $n\geq 2$, $(p_{1}, \ldots, p_{n})\in \mathcal{G}_{n}$ and
for arbitrary permutation $\sigma:\left\{1, \ldots, n\right\}\rightarrow\left\{1, \ldots, n\right\}.$
Further, we say that  $(I_{n})$ is \emph{3-semi-symmetric} if
\begin{equation}\label{Eq1.1.3}
I_{3}(p_{1}, p_{2}, p_{3})=I_{3}(p_{1}, p_{3}, p_{2})
\end{equation}
holds for all $(p_{1}, p_{2}, p_{3})\in\mathcal{G}_{3}$.

$(I_{n})$ is  called \emph{normalized} if
\begin{equation}\label{Eq1.1.4}
I_{2}\left(\frac{1}{2}, \frac{1}{2}\right)=1,
\end{equation}
and it is called \emph{$\alpha$-recursive} if
\begin{multline}\label{Eq1.1.5}
I_{n}\left(p_{1}, \ldots, p_{n}\right) \\
=I_{n-1}\left(p_{1}+p_{2}, p_{3}, \ldots, p_{n}\right)+
\left(p_{1}+p_{2}\right)^{\alpha}I_{2}\left(\frac{p_{1}}{p_{1}+p_{2}}, \frac{p_{2}}{p_{1}+p_{2}}\right)
\end{multline}
holds for all for all $n\geq 3$ and $(p_{1}, \ldots, p_{n})\in \mathcal{G}_{n}$.
In case $\alpha =1$, we say simply that $(I_{n})$ is \emph{recursive}.

For a fixed $\alpha\in\mathbb{R}$ and $2\leq n\in\mathbb{N}, 2\leq m\in\mathbb{N}$, the information measure $(I_{n})$ is said to be
\emph{($\alpha, n,m)$- additive}, if

\begin{equation}\label{Eq1.1.6}
I_{nm}\left(P\ast Q\right)=I_{n}\left(P\right)+I_{m}\left(Q\right)+(2^{1-\alpha}-1)I_{n}\left(P\right)I_{m}\left(Q\right)
\end{equation}
holds for all $P=(p_{1}, \dots, p_{n})\in\mathcal{G}_{n},\,\,\, Q=(q_{1}, \dots, q_{m})\in\mathcal{G}_{m}$  where  \\
$P\ast Q=(p_{1}q_{1}, \dots, p_{1}q_{m}, \dots, p_{n}q_{1}, \dots, p_{n}q_{m})\in\mathcal{G}_{nm}$. Finally, we say that
an information measure  $\left(I_{n}\right)$ has the \emph{sum property}, if there exists a function
$f:I\to\mathbb{R}$ such that

\begin{equation}\label{Eq1.1.7}
I_{n}(p_{1}, \dots, p_{n})=\sum_{i=1}^{n}f(p_{i}) \qquad ((p_{1}, \dots, p_{n})\in\mathcal{G}_{n})
\end{equation}
for all $2\leq n\in\mathbb{N}$. Here (and through the paper) $I$ denotes the closed unit interval $[0,1]$ if $\mathcal{G}_{n}=\Gamma_{n}$ for all $2\leq n\in\mathbb{N}$
and the open unit interval $]0,1[$ if $\mathcal{G}_{n}=\Gamma^{\circ}_{n}$ for all $2\leq n\in\mathbb{N}$. Such a function $f$ satisfying \eqref{Eq1.1.7} is called a
\emph{generating function} of $(I_{n})$.

\subsection{The characterization problem and functional equations}

The properties listed above are of algebraic nature. This is the reason why they lead to functional equations. In this section,
we present how they imply the so-called parametric fundamental equation of information and the sum form functional equations. Following the
ideas of Dar\'oczy \cite{Dar69} (see also \cite{AczDar75}), suppose first that the information measure $(I_{n})$ is  \eqref{Eq1.1.5}  $\alpha$-recursive and
\eqref{Eq1.1.3} 3-semi-symmetric, and define the function $f$ on $I$ by
\[
 f(x)=I_{2}(x,1-x)
\]
and the set $D^{\circ}=\{(x,y)\, \vert \,  x,y,x+y \in I\}$, if $I=]0,1[$ and  $D=\{(x,y)\, \vert \,  x,y\in [0,1[, \, x+y \in I\}$ if $I=[0,1]$,
respectively. Let now $(x,y)\in D^{\circ}\cup D$ and  $n=3, p_{1}=1-x-y, p_{2}=y, p_{3}=x$ in \eqref{Eq1.1.5}.
Then we have that
\begin{multline*}
I_{3}(1-x-y,y,x)=I_{2}(1-x,x)+(1-x)^{\alpha}I_{2}\left(1-\frac{y}{1-x},\frac{y}{1-x}\right)\\
=f(x)+(1-x)^{\alpha}f\left(\frac{y}{1-x}\right)
\end{multline*}
which, by \eqref{Eq1.1.3}, implies that
\begin{equation}\label{Eq1.2.1}
f(x)+(1-x)^{\alpha}f\left(\frac{y}{1-x}\right)=
f(y)+(1-y)^{\alpha}f\left(\frac{x}{1-y}\right)
\end{equation}
holds on $D^{\circ}$ and on $D$, respectively. Functional equation \eqref{Eq1.2.1} is called the
\emph{parametric fundamental equation of information}, (in case $\alpha=1$ simply the \emph{fundamental equation of information}).

Furthermore, in case $\alpha=1$ and the domain $D$, its solutions $f:[0,1]\to\mathbb{R}$ satisfying
the additional requirements $f(0)=f(1), \, f\left(\frac{1}{2}\right)=1$
are the \emph{information functions}.

The role of the $\alpha$-recursivity is very important since, with the aid of this property, we can determine
the entire information measure from its initial element $I_{2}$. On the other hand, this idea shows the importance
of equation \eqref{Eq1.2.1}, as well.

The appearance of the sum form functional equations in the characterization problems of information measures is more evident.
Indeed, the \eqref{Eq1.1.6} $(\alpha, n,m)$- additivity and the \eqref{Eq1.1.7} sum property immediately imply the
functional equation
\begin{equation}\label{Eq1.2.2}
\sum_{i=1}^{n}\sum_{j=1}^{m}f(p_{i}q_{j})=\sum_{i=1}^{n}f(p_{i})+\sum_{j=1}^{m}f(q_{j})+(2^{1-\alpha}-1)\sum_{i=1}^{n}f(p_{i})\sum_{j=1}^{m}f(q_{j})
\end{equation}
for the generating function $f$.

As we shall see in the sections below, the solutions of \eqref{Eq1.2.1} and (in many cases) also of \eqref{Eq1.2.2} can be expressed by
the solutions of some well-known and well-discussed functional equations. In what follows we remind the reader some basic facts from
this part of the theory of functional equations.

\subsection{Prerequisites from the theory of functional equations}

All the results of this subsection can be found in the monographs Acz\'{e}l \cite{Acz66} and Kuczma \cite{Kuc09}.

Let $A\subset \mathbb{R}$ be an arbitrary nonempty set and
\[
\mathcal{A}=\left\{(x, y)\in\mathbb{R}^{2}\, \vert\,  x, y, x+y\in A\right\}.
\]
A function $a\colon I\to\mathbb{R}$ is called \emph{additive on $A$}, if for all $(x, y)\in\mathcal{A}$
\begin{equation}\label{Eq1.3.1}
a(x+y)=a(x)+a(y).
\end{equation}
If $A=\mathbb{R}$, then the function $a$ will be called simply \emph{additive}.
It is well-known that the solutions of the equation above, under some
mild regularity condition, are of the form
\[
 a(x)=cx \qquad \left(x\in I\right),
\]
with a certain real constant $c$. For example, it is true that those additive functions which are
bounded above or below on a set of positive Lebesgue measure have the form
\[
 a(x)=cx \qquad \left(x\in\mathbb{R}\right)
\]
with some $c\in\mathbb{R}$.
It is also known, however, that there are additive functions the graph of which is dense in the plain.
A great number of basic functional equations can easily be reduced to \eqref{Eq1.3.1}. In the following, we list
some of them.

Let
\[
\mathcal{M}=\left\{(x,y)\in\mathbb{R}^{2}\, \vert\,  x, y, xy\in A\right\}.
\]
A function $m\colon A\to\mathbb{R}$ is called
\emph{multiplicative on $A$}, if for all $(x, y)\in\mathcal{M}$
\begin{equation*}
m(xy)=m(x)m(y).
\end{equation*}
If $A=\mathbb{R_{+}}$ or $A=\mathbb{R_{++}}$ then the function $m$ is called simply \emph{multiplicative}.

Furthermore, we say that the function
$\ell\colon A\to\mathbb{R}$ is \emph{logarithmic on $A$} if for any
$(x,y)\in\mathcal{M}$,
\begin{equation*}
\ell(xy)=\ell(x)+\ell(y)
\end{equation*}

The functional equation
\begin{equation}\label{Eq1.3.2}
\varphi(xy)=x\varphi(y)+y\varphi(x)
\end{equation}
has an important role in the following and it can easily be reduced to the functional equation of logarithmic functions by introducing
the function $\ell(x)=\frac{\varphi(x)}{x}$. Finally, we will use functions $d:\mathbb{R}\to\mathbb{R}$ that are both additive and they are
solutions of functional equation \eqref{Eq1.3.2}, that is,
\begin{equation*}
d(xy)=xd(y)+yd(x)
\end{equation*}
is also satisfied for all $x,y\in\mathbb{R}$. This kind of functions are called \emph{real derivations}. Their complete description can be found in
Kuczma \cite{Kuc09} from which it turns out the somewhat surprising fact that \emph{there are non-identically zero real derivations}. Of course, if a
real derivation bounded from one side on a set of positive Lebesgue measure then it must be identically zero, otherwise its graph is dense in the plain.

In the subsequent sections it will occur that the equations introduced above
are fulfilled only on restricted domains. Most of these cases it can be proved that the functions in question
are the restrictions of some functions which satisfy the above equations on its natural domains.
The results of this type are the so-called \emph{extension theorems}, and the first classical ones are due to
Acz\'el--Erd\H os \cite{AczErd65} and Dar\'{o}czy--Losonczi \cite{DarLos77}. As a typical and important example,
we cite the following extension theorem (see \cite{DarLos77}).

\begin{thmszn}
Assume that the function
$a_{0}:[0,1]\to\mathbb{R}$ is additive on $[0,1]$. Then there
exists a uniquely determined function $a:\mathbb{R}\to\mathbb{R}$ which is
additive on $\mathbb{R}$ such that
\[
 a_{0}(x)=a(x)
\]
holds for all $x\in [0,1]$.
\end{thmszn}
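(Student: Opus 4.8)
The plan is to build the extension using the only tool available, namely additivity on the restricted domain, by first converting it into an internal homogeneity property and then rescaling. \emph{First} I would record the elementary consequences of additivity on $[0,1]$. Putting $x=y=0$ into \eqref{Eq1.3.1} gives $a_0(0)=0$. For a fixed $x\in[0,1]$ and $n\in\mathbb{N}$, all partial sums $kx/n$ ($k=0,\dots,n$) lie in $[0,1]$, so iterating the Cauchy relation yields, by induction on $k$,
\[
a_0(x)=n\,a_0\!\left(\tfrac{x}{n}\right),\qquad\text{equivalently}\qquad a_0\!\left(\tfrac{x}{n}\right)=\tfrac1n\,a_0(x).
\]
This ``internal homogeneity'' is precisely the mechanism that will let us push $a_0$ outside $[0,1]$.

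\emph{Next}, for an arbitrary $x\in\mathbb{R}_{+}$ I would choose any $N\in\mathbb{N}$ large enough that $x/N\in[0,1]$ and set $a(x):=N\,a_0(x/N)$. The crucial point is \emph{well-definedness}: if both $N$ and $M$ work, comparing each with the common refinement $x/(NM)\in[0,1]$ via the homogeneity relation gives $N\,a_0(x/N)=NM\,a_0(x/(NM))=M\,a_0(x/M)$, so the value is independent of $N$. Taking $N=1$ shows $a$ agrees with $a_0$ on $[0,1]$.

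\emph{Then} I would verify additivity on $\mathbb{R}_{+}$: given $x,y\ge 0$, pick a single integer $N\ge x+y$, so that $x/N,\,y/N,\,(x+y)/N$ all lie in $[0,1]$ and $(x+y)/N=x/N+y/N$ is an admissible split; additivity of $a_0$ there gives $a(x+y)=N\,a_0((x+y)/N)=N\,a_0(x/N)+N\,a_0(y/N)=a(x)+a(y)$. Finally I would extend to all of $\mathbb{R}$ by the odd reflection $a(-x):=-a(x)$ for $x>0$ and confirm the full Cauchy equation by a short case analysis on the signs of $x,y,x+y$, each case collapsing to additivity already established on $\mathbb{R}_{+}$. \emph{Uniqueness} is then immediate: if $a$ and $b$ are both additive on $\mathbb{R}$ and agree with $a_0$ on $[0,1]$, their difference is additive and vanishes on $[0,1]$; the same rescaling (namely $(a-b)(x)=N(a-b)(x/N)=0$ once $x/N\in[0,1]$) together with oddness forces $a-b\equiv 0$.

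I expect the main obstacle to be the well-definedness step, since everything hinges on the homogeneity relation $a_0(x/n)=a_0(x)/n$ holding for \emph{real} $x\in[0,1]$ rather than merely rational $x$, and on checking that the domain constraint $x,y,x+y\in[0,1]$ is genuinely respected at every application of \eqref{Eq1.3.1}. It is exactly the freedom to shrink all arguments by an arbitrarily large integer factor that keeps them inside $[0,1]$ and makes the construction consistent.
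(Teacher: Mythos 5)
Your proposal is correct. Note, however, that the paper itself gives no proof of this statement: it is quoted as a classical extension theorem and attributed to Dar\'oczy--Losonczi \cite{DarLos77}, so there is no in-paper argument to compare against. Your construction is the standard route to such results, and every step checks out: the iterated Cauchy relation $a_{0}(x)=n\,a_{0}(x/n)$ is legitimate because $kx/n\le x\le 1$ keeps every intermediate pair inside the restricted domain $\mathcal{A}$; the well-definedness of $a(x):=N\,a_{0}(x/N)$ via the common refinement $x/(NM)$ is exactly the right argument, and it is crucial (as you observe) that the homogeneity relation holds for all real $x\in[0,1]$, not just rational ones, which your induction indeed delivers; additivity on $\mathbb{R}_{+}$ follows cleanly by choosing one $N\ge x+y$ so that the split $(x+y)/N=x/N+y/N$ stays in $[0,1]$; and the odd extension plus the sign case analysis (rewriting, e.g., $x=(x+y)+(-y)$ so that all arguments are nonnegative) is routine and standard, as is the uniqueness argument, where $\mathbb{N}$-homogeneity of the additive difference on all of $\mathbb{R}$ needs no domain restriction. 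The only place where you compress detail is that case analysis for mixed signs, but each case does collapse to the $\mathbb{R}_{+}$ statement as you claim. An equivalent variant found in the literature defines $a(x)$ for $x>0$ by decomposing $x=x_{1}+\dots+x_{n}$ with $x_{i}\in[0,1]$ and setting $a(x)=\sum_{i}a_{0}(x_{i})$; your dilation formulation is the same idea with a cleaner well-definedness check, since it reduces all comparisons to a single common refinement rather than to arbitrary pairs of decompositions.
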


Since all the other functional equations mentioned above in this subsection can be reduced to \eqref{Eq1.3.1} we can easily
get extension theorems for them as consequences, and their regular (say bounded on a set of positive Lebesgue measure)
solutions can also be obtained easily. In particular, the typical regular (say bounded from one side on a set of positive Lebesque measure)
solutions $\varphi:[0,+\infty[ \to\mathbb{R}$ of \eqref{Eq1.3.2} are of the form $\varphi(x)=cx\log_{2}(x)$ for all $0\leq x\in\mathbb{R}$
and for some $c\in\mathbb{R}$.

\section{Results on the fundamental equation of information and on the sum form equations}

\subsection{Information functions}

The first characterization theorem concerning the Shannon entropy (the case $\alpha=1$) considered on $\Gamma_{n}$
is due to Shannon himself, see \cite{Sha48}.
The second one, which is more abstract and mathematically well-based, can be found in
Khinchin \cite{Khi59}. In 1956, Faddeev succeed to reduce the
system of axioms used by the two previous authors, see \cite{Fad56}.
Faddeev assumed only symmetry, the normalization property, recursivity and
that the function $f\colon [0,1]\to\mathbb{R}$ defined by
\[
 f(x)=I_{2}(x,1-x) \qquad (x,y\in [0,1])
\]
is continuous. After that, the regularity assumption in the result of Faddeev was replaced by
weaker and weaker assumptions. For example, together with the above three algebraic properties
Tverberg \cite{Tve58} assumed (Lebesgue) integrability,
Lee \cite{Lee64} measurability, Dar\'oczy \cite{Dar69} continuity at zero ('small for small probabilities'),
and Diderrich \cite{Did86} boundedness on a set of positive measure, and they
showed that the above properties determine uniquely the Shannon entropy. We mention here the result of
Kendall \cite{Ken64} and Borges \cite{Bor67} who suppose monotonicity on the interval $[0,1/2[$ and proved
the same.

Concerning the characterization of the Shannon entropy, an 1969 paper of Dar\'{o}czy \cite{Dar69} meant a breakthrough.
He recognized that this characterization problem is equivalent with finding information functions that are identical with the
Shannon information function $S$ defined by
\[
S(x)=x\log_{2}(x)+(1-x)\log_{2}(1-x) \qquad (x\in I).
\]

An other important contribution was to find the general form of information functions (see  \cite{AczDar75})
which is the following.

\begin{thmszn}
A function $f:[0,1]\to\mathbb{R}$ is an information function if, and only if,
\begin{equation}\label{Eq2.1.1}
f(x)=\varphi(x)+\varphi(1-x) \quad \left(x\in [0, 1]\right)
\end{equation}
with some function $\varphi:[0,+\infty[\rightarrow \mathbb{R}$ satisfying the functional equation
\begin{equation}\label{Eq2.1.2}
\varphi(xy)=x\varphi(y)+y\varphi(x) \qquad (x,y\in [0,+\infty[)
\end{equation}
and $\varphi\left(\frac{1}{2}\right)=\frac{1}{2}$.
\end{thmszn}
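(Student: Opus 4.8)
The plan is to treat the two implications separately, the sufficiency being a direct verification and the necessity requiring us to manufacture the logarithmic ingredient $\varphi$ out of $f$.

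For the \emph{sufficiency} (``if'') part, assume $\varphi\colon[0,+\infty[\to\mathbb{R}$ satisfies \eqref{Eq2.1.2} with $\varphi\left(\tfrac12\right)=\tfrac12$ and put $f(x)=\varphi(x)+\varphi(1-x)$. First I would record the boundary values: substituting $x=y=0$ and then $x=y=1$ in \eqref{Eq2.1.2} gives $\varphi(0)=\varphi(1)=0$, whence $f(0)=f(1)=0$, while $f\left(\tfrac12\right)=2\varphi\left(\tfrac12\right)=1$. The main computation is to check \eqref{Eq1.2.1} with $\alpha=1$. The key identity is that, applying \eqref{Eq2.1.2} to the factorization $z=(1-x)\cdot\frac{z}{1-x}$, one obtains
\[
(1-x)\varphi\!\left(\tfrac{z}{1-x}\right)=\varphi(z)-\tfrac{z}{1-x}\,\varphi(1-x)\qquad(0\le x<1).
\]
Using this for $z=y$ and for $z=1-x-y$ collapses the left-hand side of \eqref{Eq1.2.1} to $\varphi(x)+\varphi(y)+\varphi(1-x-y)$, which is symmetric in $x$ and $y$; hence \eqref{Eq1.2.1} holds and $f$ is an information function.

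For the \emph{necessity} (``only if'') part, let $f$ be an information function. I would begin with the standard reductions drawn from \eqref{Eq1.2.1}: the substitution $y=0$ yields $-x f(0)=0$, so $f(0)=0$ and therefore $f(1)=0$, and the substitution $y=1-x$ yields $f(x)=f(1-x)$, i.e.\ $f$ is symmetric. The heart of the matter is to produce a function $\varphi$ (equivalently, a logarithmic $\ell$ with $\varphi(x)=x\ell(x)$) such that $f(x)=\varphi(x)+\varphi(1-x)$. Exploiting symmetry, the substitution $x\mapsto 1-s$, $y\mapsto st$ in \eqref{Eq1.2.1} produces the multiplicative-type relation, expressed purely in terms of $f$,
\[
f(s)+s\,f(t)=f(st)+(1-st)\,f\!\left(\tfrac{1-s}{1-st}\right),
\]
and a suitable regrouping of this identity is used to show the existence of a logarithmic function $\ell$ on $]0,1]$ with $f(x)=x\ell(x)+(1-x)\ell(1-x)$; equivalently, that the associated $\varphi$ satisfies \eqref{Eq2.1.2} on a restricted triangular domain.

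Finally, I would pass from the restricted domain to the full half-line. Since, as noted in the preliminaries, equation \eqref{Eq2.1.2} reduces via $\ell(x)=\varphi(x)/x$ to the logarithmic equation, and this in turn reduces to additivity, the extension theorem of Dar\'oczy--Losonczi quoted above applies: the additive-on-$[0,1]$ function extends uniquely to an additive function on $\mathbb{R}$, and translating back yields a single $\varphi\colon[0,+\infty[\to\mathbb{R}$ satisfying \eqref{Eq2.1.2} on the whole quarter-plane and with $f(x)=\varphi(x)+\varphi(1-x)$. The normalization $\varphi\left(\tfrac12\right)=\tfrac12$ then follows from $f\left(\tfrac12\right)=2\varphi\left(\tfrac12\right)=1$. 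I expect the \textbf{main obstacle} to be this middle step: extracting the logarithmic equation for $\ell$ from the fundamental equation, since one must choose the substitutions so that the ``complementary'' terms of the form $f\!\left(\tfrac{1-s}{1-st}\right)$ cancel and the variables genuinely separate multiplicatively; by contrast, the final extension to $[0,+\infty[$ is routine once the cited extension theorem is invoked.
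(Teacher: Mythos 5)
Your \emph{if} direction is complete and correct: the identity $(1-x)\varphi\left(\frac{z}{1-x}\right)=\varphi(z)-\frac{z}{1-x}\varphi(1-x)$, applied with $z=y$ and $z=1-x-y$, does collapse the left-hand side of \eqref{Eq1.2.1} (with $\alpha=1$) to the symmetric expression $\varphi(x)+\varphi(y)+\varphi(1-x-y)$, and the boundary values $f(0)=f(1)=0$, $f\left(\frac12\right)=1$ come out as you say. The preliminary reductions in the \emph{only if} direction are also correct: $f(0)=f(1)=0$, the symmetry $f(x)=f(1-x)$, and the relation $f(s)+sf(t)=f(st)+(1-st)f\left(\frac{1-s}{1-st}\right)$ all follow from the substitutions you indicate.

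The gap is exactly the step you flag as the ``main obstacle'': the existence of a logarithmic $\ell$ on $]0,1]$ with $f(x)=x\ell(x)+(1-x)\ell(1-x)$ is asserted (``a suitable regrouping of this identity is used to show\ldots''), not proved, and this existence statement \emph{is} the mathematical content of the theorem. Note that your multiplicative-type relation is merely \eqref{Eq1.2.1} rewritten via symmetry, so it carries no information beyond the original equation; extracting $\ell$ from it is exactly as hard as the theorem itself, and no finite chain of substitutions can do it, because such manipulations only produce further identities satisfied by $f$, not a new function. Indeed $\ell$ cannot be given by any formula canonically attached to $f$: as the paper points out immediately after the statement, the representing $\varphi$ is non-unique (one may replace $\varphi$ by $\varphi+d$ for any real derivation $d$, equivalently $\ell$ by $\ell(x)+d(x)/x$, which is again logarithmic), and the known existence proofs are non-constructive. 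The paper's route supplies the missing input through the cocycle equation: for an information function $f$, the function $F(x,y)=(x+y)f\left(\frac{y}{x+y}\right)$, $x,y\in\mathbb{R}_{+}$, $x+y\in\mathbb{R}_{++}$, is symmetric, positively homogeneous of degree one, and satisfies $F(x+y,z)+F(x,y)=F(x,y+z)+F(y,z)$; the purely algebraic results of Jessen--Karpf--Thorup on this equation (symmetric cocycles are coboundaries, compatibly with the homogeneity) then yield $\varphi$ satisfying \eqref{Eq2.1.2} with $F(x,y)=\varphi(x)+\varphi(y)-\varphi(x+y)$, whence $f(x)=F(1-x,x)=\varphi(x)+\varphi(1-x)$ since $\varphi(1)=0$. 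Some substitute for this cohomological input is unavoidable, and your proposal does not contain it. A minor further point: the Dar\'oczy--Losonczi theorem you invoke extends additive functions from $[0,1]$, whereas your $\ell$ lives on $]0,1]$ and corresponds to an additive function on $]-\infty,0]$; the extension is still routine (set $\ell(x)=-\ell(1/x)$ for $x>1$), but it is not literally the cited theorem.
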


The proof of this theorem is based on some results and ideas of purely algebraic nature in Jessen, Karpf,
and Thorup \cite{JesKarTho68} on the cocycle equation
\[
F(x+y,z)+F(x,y)=F(x,y+z)+F(y,z)
\]
that is satisfied, provided that
\[
F(x,y)=(x+y)f\left(\frac{y}{x+y}\right) \qquad (x,y\in \mathbb{R}_{+}, x+y\in \mathbb{R}_{++})
\]
where $f$ is an information function. Supposing that

\begin{equation*}
f(x)+(1-x)f\left(\frac{y}{1-x}\right)=
f(y)+(1-y)f\left(\frac{x}{1-y}\right)
\end{equation*}
holds only on the open domain $D^{\circ}=\{(x,y): x,y,x+y \in ]0,1[\}$
for the unknown function $f:]0,1[\to\mathbb{R}$, Maksa and Ng \cite{MakNg86} proved that $f(x)=\varphi(x)+\varphi(1-x)+ax$
for all $x\in ]0,1[$ and for some function $\varphi:[0,+\infty[\rightarrow \mathbb{R}$ satisfying functional equation
\eqref{Eq2.1.2} and for some $a\in\mathbb{R}$.

Obviously, if $\varphi(x)=-x\log_{2}x$, $x\in [0, +\infty[$ then
$\varphi\left(\frac{1}{2}\right)=\frac{1}{2}$, $\varphi$ satisfies \eqref{Eq2.1.2},
and \eqref{Eq2.1.1} implies that $f=S$. However, as it was pointed out in Acz\'el \cite{Acz81},
$f$ does not determine $\varphi$ unambiguously by \eqref{Eq2.1.1}.
Indeed, if $d:\mathbb{R}\rightarrow\mathbb{R}$  is a real derivation, that is,
$d$ satisfies both functional equations
\[
d(x+y)=d(x)+d(y) \,\,\,\,\, \text{and} \,\,\,\,\, d(xy)=xd(y)+yd(x)
\]
then \eqref{Eq2.1.1} and \eqref{Eq2.1.2} hold also with $\varphi +d$ instead of $\varphi$, moreover
$(\varphi +d)\left(\frac{1}{2}\right)=\frac{1}{2}$ is valid, as well. Thus, since there are non-identically zero
real derivations, the function $\varphi$ in \eqref{Eq2.1.1} does not inherit the regularity properties of $f$. So even for
very regular $f$ the function $\varphi$ may be very irregular. This is the main difficulty in deriving the regular solutions from the
general one.

The first successful attempt in this direction is due to Dar\'oczy \cite{Dar76}. 
By his observation, \eqref{Eq2.1.1} and \eqref{Eq2.1.2}
imply that
\begin{multline}\label{Eq2.1.3}
(x+y)f\left(\frac{y}{x+y}\right)=\varphi(x)+\varphi(y)-\varphi(x+y) 
\\
(x,y\in \mathbb{R}_{+}, x+y\in \mathbb{R}_{++}).
\end{multline}
If $f$ is (say) continuous then, for all fixed $y\in\mathbb{R}_{+}$ the difference functions $x\mapsto \varphi(x+y)-\varphi(x), \,\,x\in\mathbb{R}_{+}$
so are. Therefore, by a theorem of de Bruijn  \cite{Bru51}, $\varphi$ is a sum of a continuous and an additive function. It is not difficult to
show that the additive function is a real derivation and the other summand is a continuous solution of \eqref{Eq2.1.2}.

This is the point at which the stability idea first appeared in the investigation. Namely, supposing that the information function $f$
is bounded by a positive real number $\varepsilon$, \eqref{Eq2.1.3} implies that
\[
\left|\varphi(x)+\varphi(y)-\varphi(x+y)\right|\leq \varepsilon  \qquad (x,y\in \mathbb{R}_{+}, x+y\leq 1),
\]
that is, the Chauchy difference of $\varphi$ is bounded on a triangle. While de Bruijn type theorem is not true for
this case we could apply the stability theory in Maksa \cite{Mak80} to determine the bounded information functions
by giving a new and short proof of Diderrich's theorem published in \cite{Did86}.

At this point, we have to highlight the problem of nonnegative information functions. First of all,
we emphasis that the requirment of the nonnegativity for an information function is very natural from
information theoretical point of view, since $f(x)$ is the measure of information
belonging to the probability distribution $\left\{x, 1-x\right\},\,\, x\in [0,1]$. On the other hand, the
one-sided boundedness is important also from theoretical point of view, as well. Indeed, the solutions of
the Cauchy equation  \eqref{Eq1.3.1} bounded below or above on a set of positive Lebesgue measure are continuous linear functions.
Therefore, it was natural to expect that something similar is true for the information functions that are bounded from one side
(say nonnegative on $[0,1]$). Indeed, it was conjectured in Acz\'el--Dar\'oczy \cite{AczDar75} (supported by the partial result
Dar\'oczy--K\'atai \cite{DarKat70} by which the nonnegative information functions coincide with the Shannon one at the rational
points of $[0,1]$) that the only nonnegative information function is the Shannon one. The following counter example in Dar\'oczy--Maksa \cite{DarMak80}
however disproves this conjecture since, with any non-identically zero real derivation $d$, the function $f_{0}$ defined by

\[
f_{0}(x)=\left\{
\begin{array}{lcl}
S(x)+\dfrac{d(x)^{2}}{x(1-x)}, &\text{if}& x\in ]0, 1[ \\
0, &\text{if}& x\in\left\{0, 1\right\}
\end{array}
\right.
\]
is a nonnegative information function different from $S$. Of course there are positive results, as well. For example,
it is also proved in \cite{DarMak80} that $S(x)\leq f(x)$ for all nonnegative information function $f$ and for all $x\in [0,1]$.
An other one is about the set $K(f)=\left\{x\in [0, 1] \vert f(x)=S(x)\right\}$ which was introduced by Lawrence \cite{Law81}
and called the Shannon kernel of the nonnegative information function $f$. It is proved in Gselmann--Maksa \cite{GseMak09a}
that $K(f)$ has the form $[0,1]\cap L_{f}$ where $L_{f}$ is a subfield of $\mathbb{R}$
containing the square roots of its non-negative elements. Furthermore, if $K$ denotes the intersection of all
Shannon kernels (belonging to nonnegative information functions) then all the elements of $K$ are algebraic
over $\mathbb{Q}$ and $K$ contains all the algebraic elements of $[0,1]$ of degree at most $3$. Our first open problem is related to
these latter facts.

\begin{Opp}
Prove or disprove that all algebraic elements of the closed interval $[0,1]$ is contained by $K$, in other words
any nonnegative information function coincides with $S$ at the algebraic points of the closed unit interval.
\end{Opp}

We remark that Lawrence's conjecture in \cite{Law81} is affirmative.

The last sentences of this subsection are devoted to the case $\alpha\neq 1$ which is much simpler than the case $\alpha=1$. Indeed,
in \cite{Dar70}, Dar\'oczy determined all the solutions $f:[0,1]\to\mathbb{R}$ of \eqref{Eq1.2.1} satisfying the additional requirements
$f(0)=f(1), \, f\left(\frac{1}{2}\right)=1$. Thus he characterized the entropy of degree $\alpha$ on $\Gamma_{n}$ by using purely algebraic properties:
semi-symmetry, normalization and $\alpha$-recursivity. Since then, these results have been extended to the open domain case, as well (see e.g. the sections
about the stability).

\subsection{Sum form equations}

As we have seen earlier, the sum form equation \eqref{Eq1.2.2} is the consequence of the $(\alpha, n,m)$-additivity and the sum property. In connection with the
characterization properties discussed above, we should remark here the following implication: the sum property follows from the symmetry \eqref{Eq1.1.2}
and $\alpha$-recursivity  \eqref{Eq1.1.5}, as it is shown in \cite{AczDar75}.

In several characterization theorems for the entropy of degree $\alpha$ based on $(\alpha, n,m)$-additivity and the sum property, an additional regularity
condition was supposed for the generating function $f$ and also on the parameters $\alpha, \, n$ and $m$. We list some of results of these type in chronological order.

We begin with the Shannon case $\alpha=1$. Chaundy--McLeod \cite{ChaMcL60} proved that, if $f:[0,1]\to\mathbb{R}$ is continuous and
\begin{equation}\label{Eq2.2.1}
\sum_{i=1}^{n}\sum_{j=1}^{m}f(p_{i}q_{j})=\sum_{i=1}^{n}f(p_{i})+\sum_{j=1}^{m}f(q_{j})
\end{equation}
holds for all $(p_{1}, \ldots, p_{n})\in\Gamma_{n},\,\,(q_{1}, \ldots, q_{m})\in\Gamma_{m}$ and
for all $n\geq 2, m\geq 2$ then
\begin{equation}\label{Eq2.2.2}
f(x)=cx\log_{2}(x) \qquad (x\in [0,1])
\end{equation}
with some $c\in\mathbb{R}$. The same was proved by Acz\'el and Dar\'oczy \cite{AczDar63} supposing that $f$ is continuous and
\eqref{Eq2.2.1} holds for all $n=m\geq 2$. Dar\'oczy \cite{Dar71} determined the measurable solutions $f$ supposing that $n=3, m=2, f(1)=0$.
Dar\'oczy and J\'arai \cite{DarJar79} found the measurable solutions of \eqref{Eq2.2.1} in the case $n=m=2$ discovering solutions that are not
solutions when $n\geq 3$ or $m\geq 3$. This was one of the starting point of developing the regularity theory of functional
equations (see J\'arai \cite{Jar05}). In Maksa \cite{Mak81}, the solutions bounded from on a set of positive Lebesgue measure of \eqref{Eq2.2.1} were determined.
These are the same as in the continuous case (see \eqref{Eq2.2.2}) while it was also shown that the supposition of the  one-sided boundedness does not
lead to the same result. Counterexample can be given by real derivations (see Maksa \cite{Mak89}). Connected with these investigations the following problem
is still open.

\begin{Opp}
Find the general solution of equation \eqref{Eq2.2.1} for a fixed pair $(n,m), \, n\geq 2, m\geq 2$, particularly find all functions $f:I\to\mathbb{R}$
satisfying the functional equation
\begin{multline*}
f(xy)+f((1-x)y)+f(x(1-y))+f((1-x)(1-y))\\
=f(x)+f(1-x)+f(y)+f(1-y)
\end{multline*}
for all $x,y\in I$.
\end{Opp}

A partial result can be found in Losonczi--Maksa \cite{LosMak82}.

As we have already mentioned, in the characterization theorems for the entropy of degree $\alpha$ based on $(\alpha, n,m)$-additivity and
the sum property, an additional regularity condition was supposed for the generating function $f$. Now we present here an exceptional case
(see Maksa \cite{Mak89}) in which all the conditions refer to the information measure itself and there is no condition on the
generating function. The stability idea appears again. Indeed, suppose that the information measure $(I_{n})$ is $(1, n,m)$-additive for some
$n\geq 3, \, m\geq 2$, has the sum property with generating function $f:[0,1]\to\mathbb{R}$ and $I_{3}$ is bounded by the real number $K$, that is,
\begin{equation}\label{Eq2.2.3}
|I_{3}(p_{1},p_{2},p_{3})|\leq K \qquad ((p_{1},p_{2},p_{3})\in \Gamma_{3}).
\end{equation}
Let $x,y\in [0,1]$ such that $x+y\leq 1$ and apply \eqref{Eq2.2.3} to the probability distributions $(x,y,1-x-y)\in \Gamma_{3}$ and then
to $(x+y,1-x-y,0)\in \Gamma_{3}$, respectively to get that
\[
|I_{3}(x,y,1-x-y)|\leq K \,\, \text{and} \,\, |I_{3}(x+y,1-x-y,0)|\leq K.
\]
Therefore, because of the triangle inequality, for the generating function $f$, we have that
\[
|f(x+y)-f(x)-f(y)+f(0)|\leq 2K,
\]
that is, the stability inequality holds for the function $f-f(0)$ on a triangle. The details together with the consequences are in \cite{Mak89}.

The brief history of the case $\alpha\neq 1$ follows. The continuous solutions, supposing that \eqref{Eq1.2.2} holds for all $n\geq 2, m\geq 2$ were
determined by Behara and Nath \cite{BehNat73}, Kannappan \cite{Kan74} and Mittal \cite{Mit70} independently of each other. They found that the
continuous solutions either a sum of a continuous additive function and a constant or the sum of a continuous additive function and a continuous multiplicative
function (power function). The same was proved by Losonczi \cite{Los81} supposing that \eqref{Eq1.2.2} holds for a fixed pair $(n,m), \,\,n\geq 3, m\geq 2$
and the generating function $f$ in \eqref{Eq1.2.2} is measurable. Contrary to the case $\alpha=1$, in the case $\alpha\neq 1$ the general solution has been determined
(see Losonczi-Maksa \cite{LosMak82} and Maksa \cite{Mak87}) supposing that $n\geq 3$ and $m\geq 2$ are fixed. Characterization theorems for the entropy of degree
$\alpha$ can easily be derived from these results (see \cite{Mak89}).

In the case $\alpha\neq 1$, with the definition $g(p)=p+(2^{1-\alpha}-1)f(p), \,\, p\in I$, equation \eqref{Eq1.2.2} can be reduced to equation

\begin{equation}\label{Eq2.2.4}
\sum_{i=1}^{n}\sum_{j=1}^{m}g(p_{i}q_{j})=\sum_{i=1}^{n}g(p_{i})\sum_{j=1}^{m}g(q_{j}).
\end{equation}

The general solution of which is not known when $n=m=2$. Therefore we formulate the following open problem.

\begin{Opp}
Find all functions $g:I\to\mathbb{R}$ satisfying the functional equation
\begin{multline*}
g(xy)+g((1-x)y)+g(x(1-y))+g((1-x)(1-y))\\
=\left(g(x)+g(1-x)\right)\left(g(y)+g(1-y)\right)
\end{multline*}
for all $x,y\in I$.
\end{Opp}
A partial result is proved in Losonczi \cite{Los89}.

Further investigations related to sum form equations on open domain or for functions in several variables can be found among others in
Losonczi \cite{Los85} and in the survey paper Ebanks--Kannappan--Sahoo--Sander \cite{EbaKanSahSan97}.

\section{Stability problems}

During one of his talk, held at the University of
Wisconsin S.~Ulam posed several problems.
One of these problems has became the cornerstone
of the stability theory of functional equations,
see \cite{Ula64}.
Ulam's problem reads as follows.

Let $(G, \circ)$ be a group and
$(H, \ast)$ be a metric group with the metric
$d$.
Let $\varepsilon\geq 0$ and $f:G\rightarrow\mathbb{R}$ be
a function such that
\[
d\left(f(x\circ y), f(x)\ast f(y)\right)\leq \varepsilon
\]
holds for all $x, y\in G$.
Is it true that there exist $\delta\geq 0$ and a function
$g:G\rightarrow\mathbb{R}$ such that
\[
g(x\circ y)=g(x)\ast g(y),
\quad
\left(x, y\in G\right)
\]
so that
\[
d\left(f(x), g(x)\right)\leq \delta
\]
holds for all $x\in G$?

This question was first answered in 1941 by
D.~H.~Hyers by proving the following theorem, see \cite{Hye41}.

\begin{thmszn}
Let $\varepsilon\geq 0$, $X, Y$ be Banach spaces and
$f:X\rightarrow Y$ be a function.
Suppose that
\[
\left\|f(x+y)-f(x)-f(y)\right\|\leq \varepsilon
\]
holds for all $x, y\in X$.
Then,  for all $x\in X$, the limit
\[
a(x)=\lim_{n\rightarrow\mathbb{R}}\frac{f(2^{n}x)}{2^{n}}
\]
does exist, the function $a:X\rightarrow\mathbb{R}$ is additive
on $X$, i.e.,
\[
a(x+y)=a(x)+a(y)
\]
holds for all $x, y\in X$, furthermore,
\[
\left\|f(x)-a(x)\right\|\leq \varepsilon
\]
is fulfilled for arbitrary $x\in X$.
Additionally, the function $a:\mathbb{R}\rightarrow\mathbb{R}$ is
uniquely determined by the above formula.
\end{thmszn}

The above theorem briefly expresses the following.
Assume that $X, Y$ are Banach spaces and the function
$f:X\rightarrow Y$ satisfies the additive Cauchy equation
only 'approximatively'.
Then there exists a unique additive function
$a:X\rightarrow Y$ which is 'close' to the function $f$.
Since 1941 this result has been extended and generalized in a several
ways. Furthermore,
Ulam's problem can obviously be raised concerning not only
the Cauchy equation but also in connection with other equations, as well.
For further result consult the monograph Hyers--Isac--Rassias \cite{HyeIsaRas98}.

For instance, the stability problem of the
multiplicative Cauchy equation highlighted a
new phenomenon, which is nowadays called
\emph{superstability}.
In this case the so--called stability inequality
implies that the function in question is either
bounded or it is the exact solution of the
functional equation in question, see Baker \cite{Bak80}.

In this work we will meet an other notion, namely the
\emph{hyperstability}.
In this case, from the stability inequality,
we get that the function in question can be nothing else
than the exact solution of the functional equation in question,
see, e.g. Maksa--Páles \cite{MakPal01}.

Since the above result of D.~H.~Hyers appeared, the stability theory of functional
equations became a rapidly developing area.
Presently, in the theory of stability there exist several methods, e.g., the
Hyers' method (c.f. Forti \cite{For95}), the method of invariant means
(see Székelyhidi \cite{Sze82, Sze95}) and the method that is based on separation theorems
(see Badora--Ger--Páles \cite{BadGerPal03}).

As we will see in the following subsections, in case of the functional equations, we will deal with,
\emph{none of the above methods will work}. More precisely, in some cases the method of invariant means
is used. However, basically we have to develop new ideas to prove stability type theorems for the
functional equations, we mentioned in the introduction.
Concerning topic of invariant means, we offer the expository paper Day \cite{Day57}.
Although the only result needed from \cite{Day57} is,
that on every commutative semigroup there
exist an invariant mean, that is,
every commutative semigroup is \emph{amenable}.

The aim of this paper is to investigate the stability
of some functional equations that appear in the theory of information.
Firstly, we will investigate the above problem concerning
the parametric fundamental equation of information.
The main results and also the applications will
be listed in the subsequent subsections.
We will prove stability, superstability and hyperstability
according to the value of the parameter $\alpha$.
The results, we will present can be found in
Gselmann \cite{Gse09a, Gse09b, Gse09, Gse10} and in
Gselmann--Maksa \cite{GseMak09}.

Concerning the stability of the parametric fundamental equation of information, the first result was
the stability of equation \eqref{Eq1.1.6} on the set $D$,
assuming that $1\neq \alpha>0$ (see Maksa \cite{Mak08}). Furthermore
the stability constant, got in that paper is much smaller than that of our.
However, the method, used in Maksa \cite{Mak08} does not work if
$\alpha=1$ or $\alpha\leq 0$ or if we consider the problem on the open domain.

After that, it was proved that equation \eqref{Eq1.1.6} is stable in the sense
of Hyers and Ulam on the set $D^{\circ}$ as well as on $D$, assuming that $\alpha\leq 0$
(see \cite{GseMak09}). After that it turned out that this method is appropriate
to prove superstability  in case $1\neq \alpha>0$.
This enabled us to give a unified proof for
the stability problem of equation \eqref{Eq1.1.6}.
Finally, using a different approach,
in \cite{Gse09a} it was showed that in case $\alpha<0$, the parametric
fundamental equation of information is hyperstable on $D^{\circ}$ as well as on $D$.

\subsection{The cases $\alpha=0$ and $0<\alpha\neq 1$}

In this part of the paper we will investigate the stability of the parametric
fundamental equation of information in case for the parameter $\alpha$,
$\alpha=0$ or $0<\alpha\neq 1$ holds. The method, we will use during the proofs
were firstly developed for the case $\alpha<0$. However, it turned out that this approach
works in this case also. The results we will present here can
be found in \cite{Gse09, Gse09b} and also in \cite{Mak08}.

\begin{thm}\label{T3.1.1}
Let $\alpha, \varepsilon\in\mathbb{R}$ be fixed,
$1\neq\alpha\geq 0, \varepsilon\geq 0$.
Suppose that the function $f:]0, 1[\to\mathbb{R}$
satisfies the inequality
\begin{equation}\label{Eq3.1.1}
\left|f(x)+(1-x)^{\alpha}f\left(\frac{y}{1-x}\right)
-f(y)-(1-y)^{\alpha}f\left(\frac{x}{1-y}\right)
\right|\leq\varepsilon
\end{equation}
for all $(x, y)\in D^{\circ}$.
Then, in case $\alpha=0$,
there exists a logarithmic function
$l:]0, 1[\to\mathbb{R}$ and $c\in\mathbb{R}$ such that
\begin{equation}\label{Eq3.1.2}
\left|f(x)-\left[l(1-x)+c\right]\right|\leq K(\alpha)\varepsilon,
\quad \left(x\in ]0, 1[\right)
\end{equation}
furthermore, if $\alpha\notin \left\{0, 1\right\}$, there
exist $a, b\in\mathbb{R}$ such that
\begin{equation}\label{Eq3.1.3}
\left|f(x)-\left[ax^{\alpha}+b(1-x)^{\alpha}-b\right]\right|\leq K(\alpha)\varepsilon
\end{equation}
holds for all $x\in ]0, 1[$, where
\[
K(\alpha)=
\left|2^{1-\alpha}-1\right|^{-1}
\left(3+12\cdot2^{\alpha}+\frac{32\cdot3^{\alpha+1}}{\left|2^{-\alpha}-1\right|}\right).
\]
\end{thm}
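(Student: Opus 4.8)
The plan is to reproduce, from the approximate equation \eqref{Eq3.1.1}, the two regular solution families of the exact equation \eqref{Eq1.2.1}, treating $\alpha=0$ and $0<\alpha\neq 1$ in parallel but via two genuinely different stability engines. First I would record the relevant exact solutions. A direct substitution shows that for $\alpha\notin\{0,1\}$ every function $x\mapsto ax^{\alpha}+b\left((1-x)^{\alpha}-1\right)$ solves \eqref{Eq1.2.1}: the summand $ax^{\alpha}$ makes the left-hand side equal to $a\left(x^{\alpha}+y^{\alpha}\right)$, and the summand $b\left((1-x)^{\alpha}-1\right)$ makes it equal to $b\left((1-x-y)^{\alpha}-1\right)$, both symmetric in $x$ and $y$. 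For $\alpha=0$ the functions $x\mapsto l(1-x)+c$ with $l$ logarithmic play the same role, since $l(1-x)+l\left(\frac{y}{1-x}\right)=l(1-x-y)$ is symmetric. As these are precisely the functions occurring on the right of \eqref{Eq3.1.3} and \eqref{Eq3.1.2}, it suffices to manufacture the coefficients $a,b$ (respectively $l,c$) and to estimate the deviation.

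The engine of the proof is a substitution that moves the weight $(1-x)^{\alpha}$ onto a \emph{product} argument and thereby exposes the multiplicative (for $\alpha\neq0$) or logarithmic (for $\alpha=0$) structure hidden in \eqref{Eq1.2.1}. Putting $x=1-s$ and $y=st$ with $s,t\in\;]0,1[$ (so that $x,y,x+y\in\;]0,1[$) turns \eqref{Eq3.1.1} into
\[
\left| f(1-s)+s^{\alpha}f(t)-f(st)-(1-st)^{\alpha}f\!\left(\frac{1-s}{1-st}\right)\right|\le\varepsilon ,
\]
in which a value $f(st)$ at a product now appears. Freezing $s$ or $t$ at convenient points such as $\tfrac12$ and $\tfrac13$ produces one-variable relations between $f$ at a point and at its half or third; the constants $12\cdot 2^{\alpha}$ and $32\cdot 3^{\alpha+1}$ in $K(\alpha)$ are exactly the bookkeeping costs of these finitely many special substitutions.

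For $0<\alpha\neq1$ I would then subtract the expected power-law behaviour and iterate in the spirit of Hyers \cite{Hye41} and of Baker's superstability theorem \cite{Bak80} for the multiplicative Cauchy equation. Because the weight $s^{\alpha}$ has $\alpha\neq0$, the choice $s=\tfrac12$ furnishes a genuine geometric contraction with ratio $2^{-\alpha}\neq1$; summing the resulting telescoping series converts the approximate multiplicativity into the exact power $x^{\alpha}$ and simultaneously fixes $a$ and $b$. The geometric summation contributes the factors $\left|2^{-\alpha}-1\right|^{-1}$ and $\left|2^{1-\alpha}-1\right|^{-1}$ to $K(\alpha)$, the latter being the same normalising constant that already appears in the $(\alpha,n,m)$-additivity \eqref{Eq1.1.6}. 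For $\alpha=0$ the weight degenerates to $s^{\alpha}\equiv1$, the contraction ratio becomes $1$ and the iteration diverges; instead the displayed inequality reduces to an approximate logarithmic equation $\left|l(st)-l(s)-l(t)\right|\le\mathrm{const}\cdot\varepsilon$ on $]0,1[$, which I would solve by the method of invariant means, using that every commutative semigroup is amenable (\cite{Day57}, \cite{Sze82}), to extract an \emph{exact} logarithmic $l$ within bounded distance together with the additive constant $c$.

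The step I expect to be the main obstacle is the \emph{coupling} term $(1-st)^{\alpha}f\!\left(\frac{1-s}{1-st}\right)$: it is itself an uncontrolled value of $f$, so no single substitution closes into a clean one-variable recursion. Resolving this requires combining at least two of the special substitutions so that the coupling terms cancel up to a controlled error before the Hyers/Baker iteration is invoked, and it is precisely this cancellation that forces the explicit, rather large constant $K(\alpha)$. A secondary difficulty is \emph{uniformity}: the substitutions reach only a subinterval of $]0,1[$, so after building the approximant there I would transport it to the whole interval by an extension theorem of Dar\'oczy--Losonczi type \cite{DarLos77} and then conclude the estimate for every $x\in\;]0,1[$. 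Finally, the exclusion of $\alpha=1$ is intrinsic to the method: there $2^{1-\alpha}-1=0$, the contraction ratio equals $1$, and the exact solutions acquire an irregular real-derivation component, so a separate argument is unavoidable.
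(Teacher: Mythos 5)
Your proposal correctly lists the exact solution families, correctly identifies the substitution $x=1-s$, $y=st$ (the paper uses exactly this, as $x=1-p$, $y=pq$, in the second half of each case), and correctly names the coupling term $(1-st)^{\alpha}f\bigl(\tfrac{1-s}{1-st}\bigr)$ as the main obstacle. But the obstacle is never resolved: "combining at least two of the special substitutions so that the coupling terms cancel" is precisely the step that constitutes the proof, and it is left as a hope. The paper's resolution is a genuinely different device that your outline does not contain: one embeds $f$ into the $\alpha$-homogeneous two-variable function $F(u,v)=(u+v)^{\alpha}f\bigl(\tfrac{v}{u+v}\bigr)$ on $\mathbb{R}^{2}_{++}$, so that \eqref{Eq3.1.1} becomes an approximate cocycle inequality $\left|F(u+v,w)+F(u,v)-F(u+w,v)-F(u,w)\right|\leq\varepsilon(u+v+w)^{\alpha}$; then one forms $g(u)=F(u,1)-F(1,u)$ and $G(u,v)=F(u,v)+g(v)$, proves approximate symmetry of $G$, and exploits the homogeneity identity $G(tu,tv)-t^{\alpha}G(u,v)=g(tv)-t^{\alpha}g(v)$ to show that $g$ is approximately logarithmic (case $\alpha=0$) or approximately $c(x^{\alpha}-1)$ (case $\alpha\neq 0$, by the single substitution $t=\tfrac12$ --- no iteration at all). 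Finally, explicit algebraic identities (the paper's \eqref{Eq3.1.19} and \eqref{Eq3.1.32}) express $f_{0}(p)\left[q^{\alpha}+(1-q)^{\alpha}-1\right]-f_{0}(q)\left[p^{\alpha}+(1-p)^{\alpha}-1\right]$ through boundedly many bounded terms for \emph{all} $p,q\in\,]0,1[$, and $q=\tfrac12$ delivers $a,b$ and $K(\alpha)$. It is in these identities, not in any iteration, that the coupling terms cancel.

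Two further steps of your plan would fail as stated. First, the Hyers--Baker telescoping engine: with the coupling term present, the recursion $t\mapsto t/2$ generated by $s=\tfrac12$ produces at every step a new uncontrolled value $f\bigl(\tfrac{1}{2-t}\bigr)$, so the telescoping series cannot even be written down; this is exactly why the paper emphasizes that Hyers' method (and invariant means, except for the one sub-step of approximating an already approximately logarithmic function by an exact one) does not work for this equation. Second, the uniformity repair via a Dar\'oczy--Losonczi extension theorem is a category error: such theorems extend \emph{exact} additive or logarithmic functions from a restricted domain, not stability inequalities; a bound $|f(x)-h(x)|\leq K\varepsilon$ proved on a subinterval does not propagate to $]0,1[$ by extending $h$. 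The paper needs no such step because its identities hold for all $p,q\in\,]0,1[$ from the outset. Also, for $\alpha=0$ your claim that the substituted inequality "reduces to an approximate logarithmic equation" ignores the terms $f(1-s)$ and $f\bigl(\tfrac{1-s}{1-st}\bigr)$: the approximately logarithmic object is not $f$ itself but the auxiliary function $g$ built from the two-variable extension, which your proposal lacks.
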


\begin{proof}
Define the function $F$ on $\mathbb{R}^{2}_{++}$ by
\begin{equation}\label{Eq3.1.4}
F(u, v)=(u+v)^{\alpha}f\left(\frac{v}{u+v}\right).
\end{equation}
Then
\begin{equation}\label{Eq3.1.5}
F(tu, tv)=t^{\alpha}F(u, v) \quad \left(t, u, v \in \mathbb{R}_{++}\right)
\end{equation}
and
\begin{equation}\label{Eq3.1.6}
f(x)=F(1-x, x), \quad \left(x\in ]0,1[\right)
\end{equation}
furthermore, with the substitutions
\[
x=\frac{w}{u+v+w}, \quad y=\frac{v}{u+v+w} \quad \left(u, v, w\in \mathbb{R}_{++}\right)
\]
inequality \eqref{Eq3.1.1} implies that
\begin{multline}\label{Eq3.1.7}
\left|f\left(\frac{w}{u+v+w}\right)+\frac{(u+v)^{\alpha}}{(u+v+w)^{\alpha}}f\left(\frac{v}{u+v}\right)\right.
\\\left. -f\left(\frac{v}{u+v+w}\right)-\frac{(u+w)^{\alpha}}{(u+v+w)^{\alpha}}f\left(\frac{w}{u+w}\right)\right|
\leq \varepsilon
\end{multline}
whence, by \eqref{Eq3.1.4}
\begin{equation}\label{Eq3.1.8}
\left|F(u+v, w)+F(u, v)-F(u+w, v)-F(u, w)\right|\leq \varepsilon (u+v+w)^{\alpha}
\end{equation}
follows for all $u, v, w\in\mathbb{R}_{++}$.

In the next step we define the functions $g$ and $G$ on
$\mathbb{R}_{++}$ and on $\mathbb{R}_{++}^{2}$, respectively by
\begin{equation}\label{Eq3.1.9}
g(u)=F(u, 1)-F(1, u)
\end{equation}
and
\begin{equation}\label{Eq3.1.10}
G(u, v)=F(u, v)+g(v).
\end{equation}
We will show that
\begin{equation}\label{Eq3.1.11}
\left|G(u, v)-G(v, u)\right|\leq 3\varepsilon (u+v+1)^{\alpha}.
\quad \left(u, v\in \mathbb{R}_{++}\right)
\end{equation}
Indeed, with the substitution $w=1$, inequality \eqref{Eq3.1.8}
implies that
\begin{equation}\label{Eq3.1.12}
\left|F(u+v, 1)+F(u, v)-F(u+1, v)-F(u, 1)\right|\leq \varepsilon (u+v+1)^{\alpha}.
\end{equation}
Interchanging $u$ and $v$, it follows from \eqref{Eq3.1.12} that
\begin{multline*}
\left|-F(u+v, 1)-F(v, u)+F(v+1, u)-F(v, 1)\right|\leq \varepsilon (u+v+1)^{\alpha}
\\ \left(u, v\in\mathbb{R}_{++}\right). 
\end{multline*}
This inequality, together with \eqref{Eq3.1.12} and the triangle inequality
imply that
\begin{multline}\label{Eq3.1.13}
\left|F(u, v)-F(v, u)-F(u+1, v)-F(u, 1)+F(v+1, u)+F(v, 1)\right|
\\
\leq 2\varepsilon (u+v+1)^{\alpha}
\end{multline}
holds for all $u, v\in\mathbb{R}_{++}$.
On the other hand, with $u=1$, we get from \eqref{Eq3.1.8} that
\[
\left|F(1+v, w)+F(1, v)-F(1+w, w)-F(1, w)\right|\leq \varepsilon(1+v+w)^{\alpha}.
\]
Replacing here $v$ by $u$ and $w$ by $v$, respectively, we have that
\begin{multline*}
\left|F(u+1, v)+F(1, u)-F(v+1, u)-F(1, v)\right|\leq \varepsilon (u+v+1)^{\alpha}
\\ \left(u, v\in\mathbb{R}_{++}\right). 
\end{multline*}
Again, by the triangle inequality and the definitions
\eqref{Eq3.1.9} and \eqref{Eq3.1.10}, \eqref{Eq3.1.13} and the last inequality imply \eqref{Eq3.1.11}.

In what follows we will investigate the function $g$.
At this point of the proof we have to distinguish two cases.

\emph{Case I. ($\alpha=0$)}.In this case we will show that there exists a logarithmic function
$l:\mathbb{R}_{++}\to\mathbb{R}$ such that
\[
\left|g(u)-l(u)\right|\leq 6\varepsilon
\]
for all $u\in\mathbb{R}_{++}$.
Indeed, \eqref{Eq3.1.11} yields in this case that
\[
\left|G(u, v)-G(v, u)\right|\leq 3\varepsilon.
\quad \left(u, v\in \mathbb{R}_{++}\right)
\]
Due to \eqref{Eq3.1.5} and \eqref{Eq3.1.10} we obtain that
\[
G(tu, tv)=F(tu, tv)+g(tv)
=F(u, v)+g(tv)
=G(u, v)-g(v)+g(tv)
\]
that is,
\[
G(tu, tv)-G(u, v)=g(tv)-g(v),  \quad \left(t, u, v\in\mathbb{R}_{++}\right)
\]
therefore
\begin{multline}\label{Eq3.1.14}
\left|g(tv)-g(v)+g(u)-g(tu)\right|
=\left|G(tu, tv)-G(u, v)-G(tv, tu)+G(v, u)\right| \\
\leq \left|G(tu, tv)-G(tv, tu)\right|+\left|G(v, u)-G(u, v)\right|
\leq 6\varepsilon
\end{multline}
for all $t, u, v\in\mathbb{R}_{++}$. Now \eqref{Eq3.1.14} with the substitution
$u=1$ implies that
\[
\left|g(tv)-g(v)-g(t)\right|\leq 6\varepsilon
\]
holds for all $t, v\in\mathbb{R}_{++}$, since obviously $g(1)=0$.
This means that the function $g$ is approximately logarithmic on
$\mathbb{R}_{++}$. Thus there exists
a logarithmic function $l:\mathbb{R}_{++}\to\mathbb{R}$ such that
\[
\left|g(u)-l(u)\right|\leq 6\varepsilon
\]
holds for all $u\in \mathbb{R}_{++}$.

Furthermore,
\begin{multline}\label{Eq3.1.15}
\left|f(x)-l(1-x)-\left(f(1-x)-l(x)\right)\right| \\
=\left|F(1-x, x)-l(1-x)-F(x, 1-x)+l(x)\right| \\
=\left|F(1-x, x)+g(x)-g(x)-l(1-x)\right. 
\\
-F(x, 1-x)+g(1-x)-g(1-x)+l(x)\left.\right| \\
\leq \left|F(1-x, x)+g(x)-\left(F(x, 1-x)+g(1-x)\right)\right| \\
+\left|g(1-x)-l(1-x)\right|+\left|l(x)-g(x)\right| \\
=\left|G(1-x, x)-G(x, 1-x)\right|
+\left|g(1-x)-l(1-x)\right|+\left|l(x)-g(x)\right| \\
\leq 3\varepsilon+6\varepsilon+6\varepsilon=15 \varepsilon
\end{multline}
Define the functions $f_{0}$ and
$F_{0}$ on $]0, 1[$ and on $]0, 1[^{2}$, respectively, by
\[
f_{0}(x)=f(x)-l(1-x)
\]
and
\[
F_{0}(p, q)=f_{0}(p)+f_{0}(q)-f_{0}(pq)-f_{0}\left(\frac{1-p}{1-pq}\right)
\]
Due to \eqref{Eq3.1.15}
\begin{equation}\label{Eq3.1.16}
\left|f_{0}(x)-f_{0}(1-x)\right|\leq 15\varepsilon
\end{equation}
holds for all $x\in ]0, 1[$.
Furthermore, with the substitutions $x=1-p$, $y=pq$ ($p, q\in ]0, 1[$)
inequality \eqref{Eq3.1.1} implies, that
\begin{equation}\label{Eq3.1.17}
\left|f_{0}(1-p)+f_{0}(q)-
f_{0}(pq)-f_{0}\left(\frac{1-p}{1-pq}\right)\right|\leq\varepsilon
\end{equation}
is fulfilled for all $p, q\in ]0, 1[$.
Inequalities \eqref{Eq3.1.16}  and \eqref{Eq3.1.17} and the triangle inequality
imply that
\begin{equation}\label{Eq3.1.18}
\left|F_{0}(p, q)\right|\leq 16\varepsilon
\end{equation}
for all $p, q\in ]0, 1[$.
An easy calculation shows that
\begin{multline*}
f_{0}(p)-f_{0}(q)\\
=F_{0}(q, p)-F_{0}(p, q)+F_{0}\left(\frac{1-p}{1-pq}, p\right)-
f_{0}\left(1-\frac{1-p}{1-pq}\right)+f_{0}\left(\frac{1-p}{1-pq}\right)
\end{multline*}
therefore,
\begin{multline}\label{Eq3.1.19}
\left|f_{0}(p)-f_{0}(q)\right|\\
\leq
\left|F_{0}(q, p)\right|+\left|F_{0}(p, q)\right|+
\left|F_{0}\left(\frac{1-p}{1-pq}, p\right)\right|
\\
+
\left|f_{0}\left(1-\frac{1-p}{1-pq}\right)-f_{0}\left(\frac{1-p}{1-pq}\right)\right| \\
\leq 3\cdot 16\varepsilon+15\varepsilon=63\varepsilon
\end{multline}
holds for all $p, q\in ]0, 1[$.
With the substitution $q=\frac{1}{2}$ inequality \eqref{Eq3.1.19} implies that
\[
\left|f_{0}(p)-f_{0}\left(\frac{1}{2}\right)\right| \leq 63\varepsilon.
\quad \left(p\in ]0, 1[\right)
\]
Using the definition of the function $f_{0}$, we obtain that inequality
\[
\left|f(x)-l(1-x)-c\right|\leq 63\varepsilon
\]
is satisfied for all $x\in]0, 1[$, where $c=f_{0}\left(\frac{1}{2}\right)$.
Hence inequality \eqref{Eq3.1.2} holds, indeed.

\emph{Case II. ($1\neq \alpha\geq 0$).}Finally, we will prove that
there exists $c\in\mathbb{R}$ such that
\[
\left|g(x)-c(x^{\alpha}-1)\right|
\leq \frac{4\cdot 3^{\alpha+1}\varepsilon}{\left|2^{-\alpha}-1\right|}
\]
holds for all $x\in ]0, 1[$.

Due to inequalities \eqref{Eq3.1.4} and \eqref{Eq3.1.9},
\begin{multline*}
G(tu, tv)=F(tu, tv)+g(tv)
 =t^{\alpha}F(u, v)+g(tv)\\
 =t^{\alpha}G(u, v)-t^{\alpha}g(v)+g(tv),
\end{multline*}
that is,
\[
G(tu, tv)-t^{\alpha}G(u, v)=g(tv)-t^{\alpha}g(v)
\]
holds for all $t, v\in\mathbb{R}_{++}$.
Therefore,
\begin{multline}\label{Eq3.1.20}
\left|g(tv)-t^{\alpha}g(v)+t^{\alpha}g(u)-g(tu)\right| \\
=\left|G(tu, tv)-G(u, v)-G(tv, tu)+G(v, u)\right| \\
\leq \left|G(tu, tv)-G(tv, tu)\right|+\left|G(u, v)-G(v, u)\right| \\
\leq 3\varepsilon(t(u+v)+1)^{\alpha}+3\varepsilon(u+v+1)^{\alpha}
\end{multline}
holds for all $t, u, v\in\mathbb{R}_{++}$, where we used \eqref{Eq3.1.1}.
With the substitution $u=1$, \eqref{Eq3.1.20} implies that
\begin{multline}\label{Eq3.1.21}
\left|g(tv)-t^{\alpha}g(v)-g(t)\right|
\leq 3\varepsilon(t(v+1)+1)^{\alpha}+3\varepsilon(v+2)^{\alpha}
\quad \left(t, v\in\mathbb{R}_{++}\right)
\end{multline}
Interchanging $t$ and $v$ in \eqref{Eq3.1.21}, we obtain that
\begin{multline}\label{Eq3.1.22}
\left|g(tv)-v^{\alpha}g(t)-g(v)\right|
\leq 3\varepsilon(v(t+1)+1)^{\alpha}+3\varepsilon(t+2)^{\alpha}
\quad \left(t, v\in\mathbb{R}_{++}\right)
\end{multline}
Inequalities \eqref{Eq3.1.21}, \eqref{Eq3.1.22} and the triangle inequality
imply that
\begin{equation}\label{Eq3.1.23}
\left|t^{\alpha}g(v)+g(t)-v^{\alpha}g(t)-g(v)\right|
\leq B(t, v)
\end{equation}
is fulfilled for all $t, v\in\mathbb{R}_{++}$, where

\[
B(t, v)= 3\varepsilon(t(v+1)+1)^{\alpha}+3\varepsilon(v+2)^{\alpha}
 +3\varepsilon(v(t+1)+1)^{\alpha}+3\varepsilon(t+2)^{\alpha}.
\]

With the substitution $t=\frac{1}{2}$ and with the definition
$c=\frac{g\left(\frac{1}{2}\right)}{2^{-\alpha}-1}$, we obtain
\begin{equation}\label{Eq3.1.24}
\left|g(v)-c(v^{\alpha}-1)\right|\leq \frac{B\left(\frac{1}{2}, v\right)}{\left|2^{-\alpha}-1\right|}
\end{equation}
for all $v\in\mathbb{R}_{++}$.

Let us observe that
\[
\left|B(t, v)\right|\leq 4\cdot 3^{\alpha+1}\varepsilon
\]
holds, if $t, v\in ]0, 1[$.
Thus
\begin{equation}\label{Eq3.1.25}
\left|g(v)-c(v^{\alpha}-1)\right|\leq \frac{B\left(\frac{1}{2}, v\right)}{\left|2^{-\alpha}-1\right|}
\leq \frac{4\cdot 3^{\alpha+1}\varepsilon}{\left|2^{-\alpha}-1\right|}
\end{equation}
for all $v\in ]0, 1[$.
Therefore \eqref{Eq3.1.6}, \eqref{Eq3.1.10}, \eqref{Eq3.1.11}, \eqref{Eq3.1.25} and the
triangle inequality imply that
\begin{multline}\label{Eq3.1.26}
\left|f(x)-c(1-x)^{\alpha}+c-\left(f(1-x)-cx^{\alpha}+c\right)\right| \\
=\left|F(1-x, x)-c(1-x)^{\alpha}+c-\left(F(x, 1-x)-cx^{\alpha}+c\right)\right| \\
\leq \left|F(1-x, x)+g(x)-F(x, 1-x)-g(1-x)\right| \\
+ \left|g(x)-c(x^{\alpha}-1)\right|+\left|g(1-x)-c((1-x)^{\alpha}-1)\right| \\
=\left|G(1-x, x)-G(x, 1-x)\right| \\
+ \left|g(x)-c(x^{\alpha}-1)\right|+\left|g(1-x)-c((1-x)^{\alpha}-1)\right| \\
\leq 3\cdot 2^{\alpha}\varepsilon+\frac{8\cdot 3^{\alpha+1}\varepsilon}{\left|2^{-\alpha}-1\right|}
\end{multline}
holds for all $x\in ]0, 1[$.

As in the previous cases, we define the functions $f_{0}$ and
$F_{0}$ on $]0, 1[$ and on $]0, 1[^{2}$ by
\begin{equation}\label{Eq3.1.27}
f_{0}(x)=f(x)-c(1-x)^{\alpha}
\end{equation}
and
\begin{equation}\label{Eq3.1.28}
F_{0}(p, q)=f_{0}(p)+p^{\alpha}f_{0}(q)-f_{0}(pq)-
(1-pq)^{\alpha}f_{0}\left(\frac{1-p}{1-pq}\right),
\end{equation}
respectively.
Then \eqref{Eq3.1.1}, \eqref{Eq3.1.26} and \eqref{Eq3.1.27} imply that
\begin{equation}\label{Eq3.1.29}
\left|f_{0}(x)+(1-x)^{\alpha}f_{0}\left(\frac{y}{1-x}\right)
-f_{0}(y)-(1-y)^{\alpha}f_{0}\left(\frac{x}{1-y}\right)\right|\leq \varepsilon
\end{equation}
for all $(x, y)\in D^{\circ}$ and
\begin{equation}\label{Eq3.1.30}
\left|f_{0}(x)-f_{0}(1-x)\right|\leq 3\cdot 2^{\alpha}\varepsilon+\frac{8\cdot 3^{\alpha+1}\varepsilon}{\left|2^{-\alpha}-1\right|}.
\quad \left(x\in ]0, 1[\right)
\end{equation}
Furthermore, with the substitutions $x=1-p$, $y=pq$ ($p, q\in ]0, 1[$), \eqref{Eq3.1.29}
implies that
\begin{equation}\label{Eq3.1.31}
\left|f_{0}(1-p)+p^{\alpha}f_{0}(q)-
f_{0}(pq)-(1-pq)^{\alpha}f_{0}\left(\frac{1-p}{1-pq}\right)\right|\leq\varepsilon
\end{equation}
holds for all $p, q\in ]0, 1[$.
Thus \eqref{Eq3.1.30} and \eqref{Eq3.1.31} and the triangle
inequality imply that
\[
\left|F_{0}(p, q)\right|\leq \varepsilon+3\cdot 2^{\alpha}\varepsilon+\frac{8\cdot 3^{\alpha+1}\varepsilon}{\left|2^{-\alpha}-1\right|} .
\quad \left(x\in ]0, 1[\right)
\]
Similarly to the previous case, it is easy to see that the identity
\begin{multline}\label{Eq3.1.32}
f_{0}(p)\left[q^{\alpha}+(1-q)^{\alpha}-1\right]-f_{0}(q)\left[p^{\alpha}+(1-p)^{\alpha}-1\right] \\
=F_{0}(q, p)-F_{0}(p, q)\\-(1-pq)^{\alpha}
\left[F_{0}\left(\frac{1-q}{1-pq}, p\right)+
f_{0}\left(1-\frac{1-p}{1-pq}\right)
-f_{0}\left(\frac{1-p}{1-pq}\right)\right]
\end{multline}
is satisfied for all $p, q\in ]0, 1[$. Therefore
\begin{multline*}
\left|f_{0}(p)-\frac{f_{0}(q)}{q^{\alpha}+(1-q)^{\alpha}-1}
\left[p^{\alpha}+(1-p)^{\alpha}-1\right]\right| \\
\leq \left|q^{\alpha}+(1-q)^{\alpha}-1\right|^{-1} \times  
\\
\times \left(3\left(\varepsilon+3\cdot 2^{\alpha}\varepsilon+\frac{8\cdot 3^{\alpha+1}\varepsilon}{\left|2^{-\alpha}-1\right|}\right)+
3\cdot 2^{\alpha}\varepsilon+\frac{8\cdot 3^{\alpha+1}\varepsilon}{\left|2^{-\alpha}-1\right|}\right)
\end{multline*}

for all $p, q\in ]0, 1[$.
In view of \eqref{Eq3.1.27}, with $q=\frac{1}{2}$ with the definitions
\[
a=f_{0}\left(\frac{1}{2}\right)\left(2^{1-\alpha}-1\right)^{-1}\quad  \text{and } \quad
b=a+c,
\]
this inequality implies that
\begin{equation}\label{Eq3.1.33}
\left|f(p)-\left[ap^{\alpha}+b(1-p)^{\alpha}-b\right]\right|\leq K(\alpha)\varepsilon
\end{equation}
holds for all $p\in ]0, 1[$, where
\[
K(\alpha)=
\left|2^{1-\alpha}-1\right|^{-1}
\left(3+12\cdot2^{\alpha}+\frac{32\cdot3^{\alpha+1}}{\left|2^{-\alpha}-1\right|}\right),
\]
which had to be proved.
\end{proof}

In the following theorem we shall prove that the parametric fundamental equation of information
is stable not only on $D^{\circ}$ but also
on $D$. During the proof of this theorem the following function
will be needed.
For all $1\neq \alpha >0$ we define the function $T(\alpha)$ by
\[
T(\alpha)= 3\cdot 2^{\alpha} +\frac{8\cdot 3^{\alpha+1}}{\left|2^{-\alpha}-1\right|}.
\]
Furthermore, the following relationship is fulfilled between $K(\alpha)$ and $T(\alpha)$
\[
K(\alpha)=\frac{4T(\alpha)+3}{\left|2^{1-\alpha}-1\right|}
\]
for all $1\neq \alpha>0$.

\begin{thm}\label{T3.1.2}
Let $\alpha, \varepsilon\in\mathbb{R}$ be fixed, $0\leq\alpha\neq 1$, $\varepsilon\geq 0$.
Suppose that the function $f:[0,1]\to\mathbb{R}$ satisfies inequality
\eqref{Eq3.1.1} for all $(x, y)\in D$.
Then, in case $\alpha\neq 0$ there
exist $a, b\in\mathbb{R}$ such that the function $h_{1}$
defined on $[0, 1]$ by
\[
h_{1}(x)=\left\{
\begin{array}{lcl}
0, & \text{if} & x=0\\
ax^{\alpha}+b(1-x)^{\alpha}-b, &\text{if}& x\in \left.]0, 1[\right. \\
a-b, & \text{if} & x=1
\end{array}
\right.
\]
is a solution of \eqref{Eq1.1.6} on $D$ and
\begin{equation}\label{Eq3.1.34}
\left|f(x)-h_{1}(x)\right|\leq \max\left\{K(\alpha), T(\alpha)+1\right\}\varepsilon
\quad \left(x\in [0, 1]\right)
\end{equation}
holds.
In case $\alpha=0$, there exists $c\in\mathbb{R}$ such
that the function $h_{2}$ defined on $[0, 1]$ by
\[
h_{2}(x)=\left\{
\begin{array}{lcl}
f(0), & \text{if} & x=0\\
c, &\text{if}& x\in \left.]0, 1[\right. \\
f(1), & \text{if} & x=1
\end{array}
\right.
\]
is a solution of \eqref{Eq1.1.6} on $D$ and
\begin{equation}\label{Eq3.1.35}
\left|f(x)-h_{2}(x)\right|\leq K(\alpha)\varepsilon.
\quad \left(x\in [0, 1]\right)
\end{equation}
is fulfilled.
\end{thm}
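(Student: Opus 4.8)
The plan is to bootstrap from Theorem~\ref{T3.1.1}. Since $D^{\circ}\subset D$, the restriction of $f$ to $]0,1[$ already satisfies \eqref{Eq3.1.1} on $D^{\circ}$, so Theorem~\ref{T3.1.1} hands us real numbers $a,b$ (resp.\ a logarithmic $l$ and a constant $c$ when $\alpha=0$) together with the interior estimate $|f(x)-h_{1}(x)|\le K(\alpha)\varepsilon$ (resp.\ $|f(x)-l(1-x)-c|\le K(\alpha)\varepsilon$) for every $x\in]0,1[$. Thus the whole problem splits into two tasks: first, to check that the proposed $h_{1},h_{2}$ are genuine solutions of the parametric fundamental equation of information \eqref{Eq1.2.1} on the closed domain $D$; and second, to control the error at the two endpoints $x=0$ and $x=1$, which $D$ — unlike $D^{\circ}$ — now lets us probe directly by admitting boundary substitutions.

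For the case $\alpha\neq 0$ I would first record that the boundary values of $h_{1}$ are \emph{forced}, not free. Substituting $x=0$ into \eqref{Eq1.2.1} gives $h_{1}(0)\bigl(1-(1-y)^{\alpha}\bigr)=0$, whence $h_{1}(0)=0$ because $\alpha\neq 0$; substituting $y=1-x$ reduces, after cancellation, to exactly the identity satisfied by $h_{1}(1)=a-b$. A short direct computation (splitting $D$ into its interior, the faces $x=0$ and $y=0$, and the diagonal $x+y=1$) then confirms that $h_{1}$ solves \eqref{Eq1.2.1} on all of $D$. For the endpoint errors I would reuse the same substitutions, now in the \emph{inequality} \eqref{Eq3.1.1}. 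Taking $x=0$ and letting $y\to 1^{-}$ yields $|f(0)|\le\varepsilon$, i.e.\ $|f(0)-h_{1}(0)|\le\varepsilon$. Taking $y=1-x$ and rewriting through $f_{0}(x)=f(x)-c(1-x)^{\alpha}$ turns \eqref{Eq3.1.1} into $\bigl|[f_{0}(x)-f_{0}(1-x)]+((1-x)^{\alpha}-x^{\alpha})(c+f(1))\bigr|\le\varepsilon$; combining this with the antisymmetry estimate $|f_{0}(x)-f_{0}(1-x)|\le T(\alpha)\varepsilon$ already isolated in \eqref{Eq3.1.30} and letting $x\to 1^{-}$ gives $|f(1)-h_{1}(1)|=|f(1)+c|\le (T(\alpha)+1)\varepsilon$, since $h_{1}(1)=a-b=-c$. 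With the interior bound this establishes \eqref{Eq3.1.34} with constant $\max\{K(\alpha),T(\alpha)+1\}$.

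For $\alpha=0$ the closed domain carries strictly more information than $D^{\circ}$, and this is what collapses the logarithmic term. Indeed, taking $y=1-x$ in \eqref{Eq3.1.1} now reads simply $|f(x)-f(1-x)|\le\varepsilon$ on $]0,1[$. Re-examining the auxiliary function $g$ from the proof of Theorem~\ref{T3.1.1}, which for $\alpha=0$ is $g(u)=f(1-x)-f(x)$ with $x=u/(u+1)$, this approximate symmetry forces $|g|\le\varepsilon$ on all of $\mathbb{R}_{++}$. Since $|g-l|\le 6\varepsilon$, the logarithmic function $l$ is bounded, and a bounded logarithmic function vanishes identically (from $l(u^{n})=n\,l(u)$). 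Hence $l\equiv 0$, the interior estimate degenerates to $|f(x)-c|\le K(0)\varepsilon$, and because the $\alpha=0$ equation imposes no constraint on the endpoint values, I may simply set $h_{2}(0)=f(0)$ and $h_{2}(1)=f(1)$, making the endpoint error vanish and $h_{2}$ a solution on $D$ by a direct check; this yields \eqref{Eq3.1.35}.

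I expect two delicate points. The more technical is squeezing out the sharp constant $T(\alpha)+1$ at $x=1$: a naive triangle-inequality estimate routed through the \emph{final} approximation of Theorem~\ref{T3.1.1} would only give $(1+2K(\alpha))\varepsilon$, so it is essential to recycle the \emph{intermediate} antisymmetry bound \eqref{Eq3.1.30} instead, and to verify that the factor $|(1-x)^{\alpha}-x^{\alpha}|$ does tend to $1$ as $x\to 1^{-}$ (this is where $\alpha>0$ is used). The more conceptual point is the vanishing of $l$ when $\alpha=0$: identifying $g$ with the antisymmetric combination $f(1-x)-f(x)$ and then invoking that a bounded logarithmic function must be zero is precisely the step that separates the closed-domain statement from the open-domain one.
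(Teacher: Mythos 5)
Your proposal is correct and follows essentially the same route as the paper's own proof: bootstrap the interior estimate from Theorem \ref{T3.1.1}, obtain $\left|f(0)\right|\leq\varepsilon$ from the substitution $x=0$ (the paper's ``$y\to 0$'' is a typo for $y\to 1^{-}$, which you state correctly), derive $\left|f(1)+c\right|\leq\left(T(\alpha)+1\right)\varepsilon$ from the substitution $y=1-x$ rewritten via $f_{0}$ and combined with the intermediate antisymmetry bound \eqref{Eq3.1.30}, and, in the case $\alpha=0$, use the closed-domain symmetry $\left|f(x)-f(1-x)\right|\leq\varepsilon$ to force the logarithmic term to be bounded and hence identically zero. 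The only cosmetic differences are that the paper establishes the boundedness of $l$ by extending it to $\mathbb{R}_{++}$ and substituting $x=p/(p+q)$ (getting $127\varepsilon$) rather than through the auxiliary function $g$ as you do, and it takes the limit $x\to 0^{+}$ instead of $x\to 1^{-}$, both of which are immaterial.
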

\begin{proof}
 An easy calculation shows that the functions
$h_{1}$ and $h_{2}$ are the solutions of equation \eqref{Eq1.1.6}
on $D$ in case $\alpha\neq 0$ and $\alpha=0$, respectively.

At first, we deal with the case $\alpha>0$.
Substituting $x=0$ into \eqref{Eq3.1.1} and with $y\to 0$ we obtain that
\[
\left|f(0)\right|\leq \varepsilon \leq K(\alpha) \varepsilon,
\]
that is, \eqref{Eq3.1.34} holds for $x=0$.
If $x\in ]0, 1[$, then inequality \eqref{Eq3.1.34} follows immediately from
Theorem \ref{T3.1.1}.
Furthermore, with the substitution $y=1-x$ ($x\in ]0, 1[$) inequality \eqref{Eq3.1.1} implies that
\[
\left|f(x)+(1-x)^{\alpha}f(1)-f(1-x)-x^{\alpha}f(1)\right|\leq \varepsilon .
\quad \left(x\in ]0, 1[\right)
\]
From the proof of Theorem \ref{T3.1.1} (see definition \eqref{Eq3.1.27} it is known that
\[
f(x)=f_{0}(x)+c(1-x)^{\alpha} ,
\quad \left(x\in ]0, 1[\right)
\]
therefore the last inequality yields that
\begin{equation}\label{Eq3.1.36}
\left|f_{0}(x)-f_{0}(1-x)+c(1-x)^{\alpha}-cx^{\alpha}+(1-x)^{\alpha}f(1)-x^{\alpha}f(1)\right|\leq \varepsilon
\end{equation}
holds for all $x\in ]0, 1[$.
Whereas
\[
\left|f_{0}(x)-f_{0}(1-x)\right|\leq T(\alpha).
\quad \left(x\in ]0, 1[\right)
\]
Thus after rearranging \eqref{Eq3.1.36}, we get that
\[
\left|f_{0}(x)-f_{0}(1-x)-[c+f(1)][x^{\alpha}-(1-x)^{\alpha}]\right|\leq \varepsilon,
\quad \left(x\in ]0, 1[\right)
\]
that is,
\[
\left|\left|f_{0}(x)-f_{0}(1-x)\right|-\left|c+f(1)\right|\cdot \left|x^{\alpha}-(1-x)^{\alpha}\right|\right|
\leq \varepsilon
\]
holds for all $x\in ]0, 1[$. Therefore
\[
\left|c+f(1)\right|\cdot \left|x^{\alpha}-(1-x)^{\alpha}\right|\leq (T(\alpha)+1)\varepsilon
\]
for all $x\in ]0, 1[$.
Taking the limit $x\to 0+$, we obtain that
\[
\left|c+f(1)\right|\leq (T(\alpha)+1)\varepsilon.
\]
However, in the proof of Theorem \ref{T3.1.1}. we used the definition $c=b-a$, thus
\[
\left|f(1)-(a-b)\right|\leq (T(\alpha)+1)\varepsilon,
\]
so \eqref{Eq3.1.34} holds, indeed.

Finally, we investigate the case $\alpha=0$.
If $x=0$ or $x=1$, then \eqref{Eq3.1.35} trivially holds, since
\[
\left|f(0)-h_{2}(0)\right|=\left|f(0)-f(0)\right|=0\leq K(\alpha)\varepsilon
\]
and
\[
\left|f(1)-h_{2}(1)\right|=\left|f(1)-f(1)\right|=0\leq K(\alpha)\varepsilon .
\]
Let now $x\in ]0, 1[$ and $y=1-x$ in \eqref{Eq3.1.1}, then we obtain that
\begin{equation}\label{Eq3.1.37}
\left|f(x)-f(1-x)\right|\leq \varepsilon,  \quad \left(x\in ]0, 1[\right)
\end{equation}
if fulfilled for all $x\in ]0, 1[$.

Due to Theorem \ref{T3.1.1}. there
exists a logarithmic function $l:]0, 1[\to\mathbb{R}$ and
$c\in\mathbb{R}$ such that
\begin{equation}\label{Eq3.1.38}
\left|f(x)-l(1-x)-c\right|\leq 63\varepsilon
\end{equation}
holds for all $x\in ]0, 1[$. Hence it is enough to prove that the
function $l$ is identically zero on $]0, 1[$.
Indeed, due to \eqref{Eq3.1.37} and \eqref{Eq3.1.38}
\begin{multline}\label{Eq3.1.39}
\left|l(1-x)-l(x)\right| \\
=\left|l(1-x)-f(1-x)+f(1-x)+c
-l(x)+f(x)-f(x)-c\right| \\
\leq
\left|l(1-x)+c-f(x)\right|+\left|f(1-x)-l(x)-c\right|
+\left|f(x)-f(1-x)\right| \\
\leq 127 \varepsilon
\end{multline}
holds for all $x\in ]0, 1[$. Since the function $l$
is uniquely extendable to $\mathbb{R}_{++}$, with the
substitution $x=\frac{p}{p+q}$ ($p, q\in\mathbb{R}$), we get that
\[
\left|l(p)-l(q)\right|\leq 127\varepsilon,  \quad \left(p, q\in \mathbb{R}_{++}\right)
\]
where we used the fact that $l$ is logarithmic, as well.
This last inequality, with the substitution $q=1$ implies that
\[
\left|l(p)\right|\leq 127\varepsilon
\]
holds for all $p\in\mathbb{R}_{++}$, since $l(1)=0$. Thus
$l$ is bounded on $\mathbb{R}_{++}$.
However, the only bounded, logarithmic function on
$\mathbb{R}_{++}$ is the identically zero function.
Therefore,
\[
\left|f(x)-c\right|\leq 63\varepsilon
\]
holds for all $x\in ]0, 1[$, i.e., \eqref{Eq3.1.35} is proved.
\end{proof}

Since
\[
\lim_{\alpha\to 1}K(\alpha)=+\infty,
\]
our method is inappropriate if $\alpha=1$. Hence we cannot prove stability
concerning the fundamental equation of information neither on the set $D^{\circ}$  nor on $D$.

The stability problem for the fundamental equation of information
was raised by L.~Sz\'ekelyhidi, see 38.~Problem in \cite{Sze91}, and
to the best of the authors' knowledge, it is still open. Therefore,
we also can formulate the following.

\begin{Opp}
 Let $\varepsilon\geq 0$ be arbitrary and
$f\colon ]0, 1[\to\mathbb{R}$ be a functions.
Suppose that
\[
 \left|f(x)+(1-x)f\left(\frac{y}{1-x}\right)-f(y)-(1-y)f\left(\frac{x}{1-y}\right)\right|\leq
\]
holds for all $(x, y)\in D^{\circ}$.
Is it true that in this case there exists a solution of the fundamental
equation of information $h\colon ]0, 1[\to\mathbb{R}$ and a constant $K(\varepsilon)\in\mathbb{R}$ depending
only on $\varepsilon$ such that
\[
 \left|f(x)-h(x)\right|\leq K(\varepsilon)
\]
is fulfilled for any $x\in ]0, 1[$?
\end{Opp}

Concerning this problem, we remark that for the
system of recursive, $3$-semi-symmetric information measures, some partial
results are known, see Morando \cite{Mor01}.

Applying Theorem \ref{T3.1.1}. we can prove the stability of a
system of functional equations that characterizes the
$\alpha$-recursive, $3$-semi-symmetric information measures.

\begin{thm}\label{T3.1.3}
 Let $n\geq 2$ be a fixed positive integer
and $(I_{n})$ be the sequence of functions
$I_{n}:\Gamma^{\circ}_{n}\to\mathbb{R}$
and suppose that there exist a sequence $(\varepsilon_{n})$
of nonnegative real numbers and a real number $0\leq\alpha\neq 1$
such that
\begin{multline}\label{Eq3.1.40}
\left|I_{n}(p_{1}, \ldots, p_{n})\right.\\
-I_{n-1}(p_{1}+p_{2}, p_{3}, \ldots, p_{n})-
\left.(p_{1}+p_{2})^{\alpha}I_{2}\left(\frac{p_{1}}{p_{1}+p_{2}}, \frac{p_{2}}{p_{1}+p_{2}}\right)\right|
\leq
\varepsilon_{n-1}
\end{multline}
for all $n\geq 3$ and $(p_{1}, \ldots, p_{n})\in\Gamma^{\circ}_{n}$, and
\begin{equation}\label{Eq3.1.41}
\left|I_{3}(p_{1}, p_{2}, p_{3})-I_{3}(p_{1}, p_{3}, p_{2})\right|\leq \varepsilon_{1}
\end{equation}
holds on $\Gamma^{\circ}_{n}$.
Then, in case $\alpha=0$ there exists a logarithmic function
$l:]0, 1[\to\mathbb{R}$ and $c\in\mathbb{R}$ such that
\begin{multline}\label{Eq3.1.42}
\left|I_{n}\left(p_{1}, \ldots, p_{n}\right)-\left[cH^{0}_{n}\left(p_{1}, \ldots, p_{n}\right)+l(p_{1})\right]\right|\\
\leq \sum^{n-1}_{k=2}\varepsilon_{k}+\left(n-1\right)K(\alpha)\left(2\varepsilon_{2}+\varepsilon_{1}\right)
\end{multline}
for all $n\geq 2$ and $\left(p_{1}, \ldots, p_{n}\right)\in\Gamma^{\circ}_{n}$.
Furthermore, if $\alpha>0$ then
there exist $c, d\in\mathbb{R}$ such that
\begin{multline}\label{Eq3.1.43}
\left|I_{n}(p_{1}, \ldots, p_{n})-
\left[cH^{\alpha}_{n}(p_{1}, \ldots, p_{n})+d(p^{\alpha}_{1}-1)\right]\right|\\
\leq \sum^{n-1}_{k=2}\varepsilon_{k}+(n-1)K(\alpha)(2\varepsilon_{2}+\varepsilon_{1})
\end{multline}
holds for all $n\geq 2$ and $(p_{1}, \ldots, p_{n})\in\Gamma^{\circ}_{n}$, where the convention
$\sum^{1}_{k=2}\varepsilon_{k}=0$ is adopted.
\end{thm}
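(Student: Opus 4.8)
The plan is to reduce the whole system to the single stability inequality \eqref{Eq3.1.1} treated in Theorem \ref{T3.1.1}, and then to propagate the resulting $n=2$ estimate to all $n$ by an induction driven by the approximate recursivity \eqref{Eq3.1.40}. First I would set $f(t)=I_2(1-t,t)$ for $t\in\,]0,1[$ and show that $f$ satisfies \eqref{Eq3.1.1} with $\varepsilon$ replaced by $2\varepsilon_2+\varepsilon_1$. To this end, fix $(x,y)\in D^{\circ}$ and apply \eqref{Eq3.1.40} with $n=3$ to the two distributions $(1-x-y,y,x)$ and $(1-x-y,x,y)$, and apply the approximate $3$-semi-symmetry \eqref{Eq3.1.41} to $(1-x-y,y,x)$. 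Adding the three inequalities, the two $I_3$-values cancel, and since $f(t)=I_2(1-t,t)$ turns $I_2(1-x,x)$ into $f(x)$, it turns $I_2\bigl(\tfrac{1-x-y}{1-x},\tfrac{y}{1-x}\bigr)$ into $f\bigl(\tfrac{y}{1-x}\bigr)$, and similarly for the $y$-terms, so the triangle inequality yields exactly \eqref{Eq3.1.1} for $f$ with bound $2\varepsilon_2+\varepsilon_1$ on $D^{\circ}$.

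With this in hand, Theorem \ref{T3.1.1} gives the $n=2$ case directly. For $\alpha>0$ there are $a,b\in\mathbb{R}$ with $\bigl|f(x)-[ax^{\alpha}+b(1-x)^{\alpha}-b]\bigr|\le K(\alpha)(2\varepsilon_2+\varepsilon_1)$; rewriting $I_2(s,1-s)=f(1-s)$ and matching coefficients against $cH^{\alpha}_2(s,1-s)+d(s^{\alpha}-1)$ forces $c=a(2^{1-\alpha}-1)$ and $d=b-a$, so that \eqref{Eq3.1.43} holds for $n=2$. For $\alpha=0$ the logarithmic part of Theorem \ref{T3.1.1}, together with $H^{0}_2=1$, gives \eqref{Eq3.1.42} for $n=2$ with the same $l$ and $c$.

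Next I would run an induction on $n$. The key structural observation is that the proposed approximant is an \emph{exact} solution of the recursion \eqref{Eq1.1.5}: since $(H^{\alpha}_n)$ is $\alpha$-recursive by construction, it suffices to check that the correction term is recursive as well. For $\alpha>0$ this is the identity $d(p_1^{\alpha}-1)=d((p_1+p_2)^{\alpha}-1)+(p_1+p_2)^{\alpha}d\bigl((\tfrac{p_1}{p_1+p_2})^{\alpha}-1\bigr)$, and for $\alpha=0$ it is the logarithmic identity $l(p_1)=l(p_1+p_2)+l\bigl(\tfrac{p_1}{p_1+p_2}\bigr)$. Granting this, I apply \eqref{Eq3.1.40}, estimate $I_{n-1}(p_1+p_2,p_3,\dots,p_n)$ by the inductive hypothesis and $I_2\bigl(\tfrac{p_1}{p_1+p_2},\tfrac{p_2}{p_1+p_2}\bigr)$ by the $n=2$ estimate, and combine by the triangle inequality. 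Because $0<p_1+p_2<1$ and $\alpha\ge0$ give $(p_1+p_2)^{\alpha}\le1$, the $I_2$-error contributes at most $K(\alpha)(2\varepsilon_2+\varepsilon_1)$, so the bound grows from $\sum_{k=2}^{n-2}\varepsilon_k+(n-2)K(\alpha)(2\varepsilon_2+\varepsilon_1)$ to $\sum_{k=2}^{n-1}\varepsilon_k+(n-1)K(\alpha)(2\varepsilon_2+\varepsilon_1)$, closing the induction and yielding \eqref{Eq3.1.42} and \eqref{Eq3.1.43}.

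The routine parts are the cancellation of the $I_3$-terms and the coefficient matching; the two places that require genuine care are choosing the approximant so that it satisfies the recursion \emph{exactly} (otherwise a second error source would appear at every level and the linear-in-$n$ bound would be lost) and controlling the $I_2$-error by the factor $(p_1+p_2)^{\alpha}\le1$ rather than letting it accumulate multiplicatively. I expect the main obstacle to be the bookkeeping of the constants, so that the final bound is precisely $\sum_{k=2}^{n-1}\varepsilon_k+(n-1)K(\alpha)(2\varepsilon_2+\varepsilon_1)$ and so that the same constants $c,d$ (respectively $c,l$) produced at the $n=2$ step are exactly the ones that make the recursion identity hold at every higher level.
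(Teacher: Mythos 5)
Your proposal is correct and follows essentially the same route as the paper: reduce to inequality \eqref{Eq3.1.1} for $f(x)=I_{2}(1-x,x)$ with $\varepsilon=2\varepsilon_{2}+\varepsilon_{1}$, invoke Theorem \ref{T3.1.1} for the $n=2$ case with the same constant matching $c=(2^{1-\alpha}-1)a$, $d=b-a$, and then induct on $n$ using the fact that the approximant satisfies the recursion \eqref{Eq1.1.5} exactly, so that each step adds only $\varepsilon_{n-1}$ plus one $K(\alpha)(2\varepsilon_{2}+\varepsilon_{1})$ term. If anything, your write-up is more explicit than the paper's at two points where its notation slips: you keep the correction term $d(p_{1}^{\alpha}-1)$ inside the approximant $J_{n}$ (the paper's displayed definition of $J_{n}$ accidentally omits it) and you verify the identity that makes that correction term $\alpha$-recursive.
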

\begin{proof}
Similarly as in the proof of Theorem \ref{T3.1.3}., due to \eqref{Eq3.1.40} and \eqref{Eq3.1.41},
it can be proved that,
for the function $f$ defined on $]0, 1[$ by
$f(x)=I_{2}(1-x, x)$ we get that
\[
\left|f(x)+(1-x)^{\alpha}f\left(\frac{y}{1-x}\right)
-f(y)-(1-y)^{\alpha}f\left(\frac{x}{1-y}\right)\right|\leq 2\varepsilon_{2}+\varepsilon_{1}
\]
for all $(x, y)\in D^{\circ}$, i.e., \eqref{Eq3.1.1} holds with
$\varepsilon=2\varepsilon_{2}+\varepsilon_{1}$.
Therefore, applying Theorem \ref{T3.1.3}. we obtain
\eqref{Eq3.1.2} and \eqref{Eq3.1.3}, respectively, with some
$a, b, c\in\mathbb{R}$ and a logarithmic function $l:]0, 1[\to\mathbb{R}$
and
$\varepsilon=2\varepsilon_{2}+\varepsilon_{1}$, i.e.,

\[
\left|I_{2}\left(1-x, x\right)-\left(ax^{\alpha}+b(1-x)^{\alpha}-b\right)\right|\leq K(\alpha)(2\varepsilon_{2}+\varepsilon_{1}),
\quad \left(x\in ]0, 1[\right)
\]
in case $\alpha\neq 0$, and
\[
\left|I_{2}\left(1-x, x\right)-\left(l(1-x)+c\right)\right|\leq K(\alpha)(2\varepsilon_{2}+\varepsilon_{1})
\quad \left(x\in ]0, 1[\right)
\]
in case $\alpha=0$.

Therefore \eqref{Eq3.1.42} holds  with $c=(2^{1-\alpha}-1)a$,
$d=b-a$ in case $\alpha>0$
and \eqref{Eq3.1.43} holds in case $\alpha=0$, respectively, for $n=2$.

We continue the proof by induction on $n$.
Suppose that \eqref{Eq3.1.42} and \eqref{Eq3.1.43} hold, resp., and for the
sake of brevity, introduce the notation
\[
J_{n}(p_{1}, \ldots, p_{n})=\left\{
\begin{array}{lcl}
cH^{\alpha}_{n}(p_{1}, \ldots, p_{n}),&\text{if}& \alpha\neq 0 \\
cH^{0}_{n}(p_{1}, \ldots, p_{n})+l(p_{1}),&\text{if}& \alpha=0
\end{array}
\right.
\]
for all $n\geq 2$, $(p_{1}, \ldots, p_{n})\in\Gamma^{\circ}_{n}$.
It can easily be seen that \eqref{Eq3.1.42} and \eqref{Eq3.1.43} hold
on $\Gamma^{\circ}_{n}$ for $J_{n}$ instead of $I_{n}$ ($n\geq 3$)
with $\varepsilon_{n}=0$ ($n\geq 2$).

Therefore, if $\alpha=0$, \eqref{Eq3.1.40} (with $n+1$ instead of $n$),
\eqref{Eq3.1.42} with $n=2$ and the induction hypothesis
(applying to $(p_{1}+p_{2}, \ldots, p_{n+1})$ instead of $(p_{1}, \ldots, p_{n})$)
imply that

\begin{multline*}
\left|I_{n+1}(p_{1}, \ldots, p_{n+1})-J_{n+1}(p_{1}, \ldots, p_{n+1})\right| \\
\leq \varepsilon_{n}+\sum^{n-1}_{k=2}\varepsilon_{k}+K(\alpha)(n-1)(2\varepsilon_{2}+\varepsilon_{1})+
K(\alpha)(2\varepsilon_{2}+\varepsilon_{1}) \\
=\sum^{n}_{k=2}\varepsilon_{k}+K(\alpha)n(2\varepsilon_{2}+\varepsilon_{1}).
\end{multline*}

This yields that \eqref{Eq3.1.42} holds for $n+1$ instead of $n$.

Furthermore, if $\alpha>0$, then \eqref{Eq3.1.40} (with $n+1$ instead of $n$),
\eqref{Eq3.1.43} with $n=2$ and the induction hypothesis
(applying to $(p_{1}+p_{2}, \ldots, p_{n+1})$ instead of $(p_{1}, \ldots, p_{n})$)
imply that

\begin{multline*}
\left|I_{n+1}(p_{1}, \ldots, p_{n+1})-J_{n+1}(p_{1}, \ldots, p_{n+1})\right| \\
\leq \varepsilon_{n}+\sum^{n-1}_{k=2}\varepsilon_{k}+K(\alpha)(n-1)(2\varepsilon_{2}+\varepsilon_{1})+
K(\alpha)(2\varepsilon_{2}+\varepsilon_{1}) \\
=\sum^{n}_{k=2}\varepsilon_{k}+K(\alpha)n(2\varepsilon_{2}+\varepsilon_{1}),
\end{multline*}

that is, \eqref{Eq3.1.43} holds for $n+1$ instead of $n$.
\end{proof}

\subsection{The case $\alpha<0$}

At this part of the paper we will turn to investigate the
case $\alpha<0$. Here it will be proved the for negative parameters,
the parametric fundamental equation of information is
\emph{hyperstable} on $D^{\circ}$ as well as on $D$. As an application of these
results, we will deduce that the system of $\alpha$-recursive, $3$-semi-symmetric
information measures is stable.

\begin{thm}\label{T3.2.1}
Let $\alpha, \varepsilon\in\mathbb{R}$, $\alpha<0$, $\varepsilon\geq 0$
and $f:]0, 1[\to\mathbb{R}$ be a function.
Assume that
\begin{equation}\label{Eq3.2.1}
\left|f(x)+(1-x)^{\alpha}f\left(\frac{y}{1-x}\right)-
f(y)-(1-y)^{\alpha}f\left(\frac{x}{1-y}\right)\right|\leq \varepsilon
\end{equation}
holds for all $(x, y)\in D^{\circ}$.
Then, and only then, there exist $c, d\in\mathbb{R}$ such that
\begin{equation}\label{Eq3.2.2}
f(x)=cx^{\alpha}+d(1-x)^{\alpha}-d
\end{equation}
for all $x\in ]0, 1[$.
\end{thm}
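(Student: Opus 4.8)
The plan is to treat the two implications separately, the converse being the substance of the statement (this is a \emph{hyperstability} phenomenon: the inequality forces $f$ to be an exact solution).

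The sufficiency (the ``and only then'') is a direct verification. Substituting $f(x)=cx^{\alpha}+d(1-x)^{\alpha}-d$ into the left-hand side of \eqref{Eq3.2.1} one computes $f(x)+(1-x)^{\alpha}f\!\left(\frac{y}{1-x}\right)=cx^{\alpha}+cy^{\alpha}+d(1-x-y)^{\alpha}-d$, which is symmetric in $x$ and $y$; hence the expression inside the absolute value vanishes identically and the inequality holds for every $\varepsilon\ge 0$. Thus every function of the form \eqref{Eq3.2.2} satisfies \eqref{Eq3.2.1}.

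For the necessity I would follow the homogenization used in Theorem \ref{T3.1.1}. Put $F(u,v)=(u+v)^{\alpha}f\!\left(\frac{v}{u+v}\right)$ on $\mathbb{R}^{2}_{++}$, so that $F(tu,tv)=t^{\alpha}F(u,v)$ and $f(x)=F(1-x,x)$; then \eqref{Eq3.2.1} becomes $\left|F(u+v,w)+F(u,v)-F(u+w,v)-F(u,w)\right|\le\varepsilon(u+v+w)^{\alpha}$. The decisive new feature when $\alpha<0$ is that the right-hand side tends to $0$ as $u+v+w\to\infty$; this is exactly what converts mere stability into hyperstability. With $g(u)=F(u,1)-F(1,u)$ and $G(u,v)=F(u,v)+g(v)$ one obtains, as in Theorem \ref{T3.1.1}, the bound $\left|G(u,v)-G(v,u)\right|\le 3\varepsilon(u+v+1)^{\alpha}$ and the relation \eqref{Eq3.1.20} for $g$. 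I would then exploit the decay twice. First, rewriting \eqref{Eq3.1.20} in terms of $\psi_{t}(u)=g(tu)-t^{\alpha}g(u)$, the increments $\psi_{t}(u)-\psi_{t}(v)$ are bounded by quantities that vanish as the free variable tends to $+\infty$; hence $\psi_{t}$ is Cauchy at infinity and, letting that variable escape, $\psi_{t}$ must be \emph{constant} in its argument. This yields the \emph{exact} identity $g(tv)-t^{\alpha}g(v)=g(t)$, and interchanging $t$ and $v$ forces $g(v)=\gamma(v^{\alpha}-1)$ for some $\gamma\in\mathbb{R}$. Second, after subtracting the exact solution $\gamma\bigl((1-x)^{\alpha}-1\bigr)$ from $f$ I reduce to a function $f_{1}$ whose associated $g$ vanishes, so that $G$ coincides with $F_{1}$ and $\left|F_{1}(u,v)-F_{1}(v,u)\right|\le 3\varepsilon(u+v+1)^{\alpha}$; since the left-hand side is homogeneous of degree $\alpha$, writing $u=sv$ and letting $v\to 0^{+}$ makes the homogeneity factor $v^{\alpha}\to+\infty$ dominate the bounded numerator, forcing $F_{1}(u,v)=F_{1}(v,u)$, i.e. $f_{1}(x)=f_{1}(1-x)$.

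The hard part will be the last step: pinning the \emph{symmetric} part. After the two reductions $f_{1}$ is symmetric but still only satisfies the stability inequality with constant $\varepsilon$, and here the scaling tricks stall, because the natural substitution $x=1-p,\ y=pq$ keeps all three homogenization variables on the same scale, so the error bound stays homogeneous of degree $\alpha$ and cannot be driven to $0$ by dilation; moreover naive limits would probe $f_{1}$ near the endpoints $0$ and $1$, where no regularity is available. I expect to overcome this by a further fixed-variable-plus-scaling argument that first breaks the homogeneity (just as the ``$+1$'' did in the estimate for $G$) and then exploits the $\alpha<0$ decay to upgrade the reduced inequality to an equation. Once $f$ is shown to satisfy the parametric fundamental equation of information \eqref{Eq1.2.1} \emph{exactly} on $D^{\circ}$, the known description of its general solution for $\alpha\neq 1$ on the open domain gives $f(x)=cx^{\alpha}+d(1-x)^{\alpha}-d$, completing the proof.
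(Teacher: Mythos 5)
Your sufficiency check and the first two reductions of the necessity part are sound: the homogenized inequality \eqref{Eq3.1.8} and its consequences \eqref{Eq3.1.11} and \eqref{Eq3.1.20} are derived in Theorem \ref{T3.1.1} by purely algebraic substitutions and the triangle inequality, so they remain valid for $\alpha<0$, and your two uses of the decay of $(u+v+1)^{\alpha}$ at infinity --- first to show that $u\mapsto g(tu)-t^{\alpha}g(u)$ is constant, whence $g(v)=\gamma(v^{\alpha}-1)$ exactly, and then, after subtracting the exact solution $\gamma\bigl((1-x)^{\alpha}-1\bigr)$, to force $F_{1}(u,v)=F_{1}(v,u)$, i.e. $f_{1}(x)=f_{1}(1-x)$ --- are correct. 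But the argument stops exactly where the theorem begins: you must still show that a \emph{symmetric} $f_{1}$ obeying \eqref{Eq3.2.1} is an exact solution, and for this you offer only the hope of ``a further fixed-variable-plus-scaling argument''. Your own diagnosis shows why no such argument can come out of your framework: in the homogenized picture the hypothesis is \eqref{Eq3.1.8}, whose error bound $\varepsilon(u+v+w)^{\alpha}$ is homogeneous of degree $\alpha$, exactly like $F_{1}$ itself, so every dilation returns the same inequality; the scaling information has been exhausted by your two reductions. What remains --- exactness of the symmetric part --- is the actual hyperstability content of the theorem: running the Case II machinery of Theorem \ref{T3.1.1} on $f_{1}$ would only yield $\left|f_{1}(x)-c\left(x^{\alpha}+(1-x)^{\alpha}-1\right)\right|\leq K\varepsilon$, i.e. stability, not the claimed identity. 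So the proposal has a genuine gap at its decisive step.

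The paper's proof supplies precisely the idea your plan lacks, and it bypasses your reductions entirely. Instead of the homogenized $F$, it works with the Cauchy-type difference $G(x,y)=f(x)+(1-x)^{\alpha}f\left(\frac{y}{1-x}\right)-f(x+y)$ on $D^{\circ}$, as in \eqref{Eq3.2.3}. Two facts are decisive: the stability inequality says $\left|G(x,y)-G(y,x)\right|\leq\varepsilon$ with a \emph{constant} error, and $G$ satisfies the cocycle-type identity \eqref{Eq3.2.5} \emph{exactly}, for an arbitrary $f$. Using the identity for three permutations of $(x,y,z)$ and the approximate symmetry, the paper gets $\left|G(tu,tv)-t^{\alpha}G(u,v)\right|\leq\left(2+(1-tu)^{\alpha}\right)\varepsilon$, an approximate homogeneity whose error stays \emph{bounded} as $t\to 0^{+}$; dividing by $t^{\alpha}\to+\infty$ (here $\alpha<0$ enters) gives exact $\alpha$-homogeneity of $G$, and feeding that back into $\left|G(tx,ty)-G(ty,tx)\right|\leq\varepsilon$ gives exact symmetry, i.e. $f$ satisfies \eqref{Eq1.2.1} on $D^{\circ}$; Theorem 3 of Maksa \cite{Mak82} then yields \eqref{Eq3.2.2}. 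If you wish to salvage your outline, apply this cocycle-identity argument to your reduced function $f_{1}$: that is the ``breaking of homogeneity'' you were looking for --- achieved not by a substitution, but by trading $F$ for an object whose defect is measured by the constant $\varepsilon$ rather than by $\varepsilon(u+v+w)^{\alpha}$.
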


\begin{proof}
It is easy to see that for the function $f$ is given by formula
\eqref{Eq3.2.2} functional equation 
\[
f(x)+(1-x)^{\alpha}f\left(\frac{y}{1-x}\right)=
f(y)+(1-y)^{\alpha}f\left(\frac{x}{1-y}\right)
\]
holds for all $(x, y)\in D^{\circ}$.
Thus inequality \eqref{Eq3.2.1} is also satisfied with arbitrary $\varepsilon\geq 0$.
Therefore it is enough to prove the converse direction.

Define the function $G:D^{\circ}\to\mathbb{R}$ by
\begin{equation}\label{Eq3.2.3}
G(x, y)=f(x)+(1-x)^{\alpha}f\left(\frac{y}{1-x}\right)-f(x+y).
\quad \left((x, y)\in D^{\circ}\right)
\end{equation}
Then inequality \eqref{Eq3.2.1} immediately implies that
\begin{equation}\label{Eq3.2.4}
\left|G(x, y)-G(y, x)\right|\leq \varepsilon
\end{equation}
for all $(x, y)\in D^{\circ}$.

Let $(x, y, z)\in D^{\circ}_{3}$, then due to the definition of the function $G$,
\[
G(x+y, z)=f(x+y)+(1-(x+y))^{\alpha}f\left(\frac{z}{1-(x+y)}\right)-f(x+y+z),
\]

\[
G(x, y+z)=f(x)+(1-x)^{\alpha}f\left(\frac{y+z}{1-x}\right)-f(x+y+z)
\]
and

\begin{multline*}
(1-x)^{\alpha}G\left(\frac{y}{1-x}, \frac{z}{1-x}\right)\\
=(1-x)^{\alpha}\left[f\left(\frac{y}{1-x}\right)+
\left(1-\frac{y}{1-x}\right)^{\alpha}f\left(\frac{\frac{z}{1-x}}{1-\frac{y}{1-x}}\right)-
f\left(\frac{y+z}{1-x}\right)\right],
\end{multline*}
therefore
\begin{equation}\label{Eq3.2.5}
G(x, y)+G(x+y, z)
=G(x, y+z)+(1-x)^{\alpha}G\left(\frac{y}{1-x}, \frac{z}{1-x}\right)
\end{equation}
holds on $D^{\circ}_{3}$, where we used the identity
\[
\dfrac{z}{1-(x+y)}=\dfrac{\dfrac{z}{1-x}}{1-\dfrac{y}{1-x}}
\]
also.

In what follows we will show that the function $G$ is $\alpha$--homogeneous.
Indeed, interchanging $x$ and $y$ in \eqref{Eq3.2.5}, we get
\begin{multline*}
G(y, x)+G(x+y, z)\\
=G(y, x+z)+(1-y)^{\alpha}G\left(\frac{x}{1-y}, \frac{z}{1-y}\right).
\quad \left((x, y, z)\in D^{\circ}_{3}\right)
\end{multline*}
Furthermore, equation \eqref{Eq3.2.5} with the substitution
\[
(x, y, z)=(y, z, x)
\]
yields that
\[
G(y, z)+G(y+z, x)=G(y, x+z)+(1-y)^{\alpha}G\left(\frac{z}{1-y}, \frac{x}{1-y}\right)
\]
is fulfilled for all $(x, y, z)\in D^{\circ}_{3}$.

Thus
\begin{multline}\label{Eq3.2.6}
G(y, z)-(1-x)^{\alpha}G\left(\frac{y}{1-x}, \frac{z}{1-x}\right) \\
=\left\{G(x, y)+G(x+y, z)-G(x, y+z)-(1-x)^{\alpha}G\left(\frac{y}{1-x}, \frac{z}{1-x}\right)\right\} \\
-G(x, y)-G(x+y, z)+G(x, y+z) \\
+\left\{G(y, x)+G(x+y, z)-G(y, x+z)-(1-y)^{\alpha}G\left(\frac{x}{1-y}, \frac{z}{1-y}\right)\right\} \\
+\left\{G(y, z)+G(y+z, x)-G(y, x+z)-(1-y)^{\alpha}G\left(\frac{z}{1-y}, \frac{x}{1-y}\right)\right\} \\
-G(y+z, x)+G(y, x+z)+(1-y)^{\alpha}G\left(\frac{z}{1-y}, \frac{x}{1-y}\right) \\
=G(y, x)-G(x, y) +G(x, y+z)-G(y+z, x) \\
+(1-y)^{\alpha}\left(G\left(\frac{z}{1-y}, \frac{x}{1-y}\right)-G\left(\frac{x}{1-y}, \frac{z}{1-y}\right)\right)
\end{multline}
for all $(x, y, z)\in D^{\circ}_{3}$, since the expressions in the curly brackets are zeros.
Thus \eqref{Eq3.2.6}, \eqref{Eq3.2.4} and the triangle inequality imply that
\begin{equation}\label{Eq3.2.6*}
\left|G(y, z)-(1-x)^{\alpha}G\left(\frac{y}{1-x}, \frac{z}{1-x}\right)\right|
\leq \left(2+(1-y)^{\alpha}\right)\varepsilon
\end{equation}
is fulfilled for all $(x, y, z)\in D^{\circ}_{3}$.
Given any $t\in ]0, 1[$, $(u, v)\in D^{\circ}$, let
\[
x=1-t, \quad y=tu \quad \text{and} \quad z=tv.
\]
Then $x, y, z\in ]0, 1[$ and
\[
x+y+z=1-t(1-u-v)\in ]0, 1[,
\]
that is $(x, y, z)\in D^{\circ}_{3}$, and inequality \eqref{Eq3.2.6*} implies that
\[
\left|G(tu, tv)-t^{\alpha}G(u, v)\right|\leq \left(2+(1-tu)^{\alpha}\right)\varepsilon,
\]
or, after rearranging,
\[
\left|\frac{G(tu, tv)}{t^{\alpha}}-G(u, v)\right|\leq \frac{\left(2+(1-tu)^{\alpha}\right)}{t^{\alpha}}\varepsilon
\]
holds for arbitrary $t\in ]0, 1[$ and $(u, v)\in D^{\circ}$.
Taking the limit $t\to 0+$ we obtain that
\[
\lim_{t\to 0+}\frac{G(tu, tv)}{t^{\alpha}}=G(u, v), \quad \left((u, v)\in D^{\circ}\right)
\]
since $\lim_{t\to 0+}(1-tu)^{\alpha}=1$ for all $u\in ]0, 1[$ and
$\lim_{t\to 0+}t^{-\alpha}=0$, since $\alpha<0$.
This implies that the function $G$ is $\alpha$--homogeneous on $D^{\circ}$.
Indeed, for arbitrary $s\in ]0, 1[$ and $(u, v)\in D^{\circ}$

\begin{multline}\label{Eq3.2.7}
G(su, sv)=\lim_{t\to 0+}\frac{G(t(su), t(sv))}{t^{\alpha}}
\\=s^{\alpha}\lim_{t\to 0+}\frac{G\left((ts)u, (ts)v\right)}{(ts)^{\alpha}}=s^{\alpha}G(u, v).
\end{multline}
At this point of the proof we will show that inequality \eqref{Eq3.2.4} and equation
\eqref{Eq3.2.7} together imply the symmetry of the function $G$.
Indeed, due to \eqref{Eq3.2.4}
\[
\left|G\left(tx, ty\right)-G\left(ty, tx\right)\right|\leq \varepsilon
\]
holds for all $(x, y)\in D^{\circ}$ and $t\in ]0, 1[$.
Using the $\alpha$-homogeneity of the function $G$, we obtain that
\[
\left|t^{\alpha}G\left(x, y\right)-t^{\alpha}G\left(y, x\right)\right|\leq\varepsilon,
\quad \left((x, y)\in D^{\circ}, t\in ]0, 1[\right)
\]
or, if we rearrange this,
\[
\left|G\left(x, y\right)-G\left(y, x\right)\right|\leq \frac{\varepsilon}{t^{\alpha}}
\]
holds for all $(x, y)\in D^{\circ}$ and $t\in ]0, 1[$. Taking the limit $t\to 0+$, we get
that
\[
G(x, y)=G(y, x)
\]
is fulfilled for all $(x, y)\in D^{\circ}$, since $\alpha<0$.
Therefore the function $G$ is symmetric. Due to definition \eqref{Eq3.2.3} this
implies that
\[
f(x)+(1-x)^{\alpha}f\left(\frac{y}{1-x}\right)=
f(y)+(1-y)^{\alpha}f\left(\frac{x}{1-y}\right),  \quad \left((x, y)\in D^{\circ}\right)
\]
i.e., the function $f$ satisfies the parametric fundamental equation of information on
$D^{\circ}$. Thus by Theorem 3. of Maksa \cite{Mak82} there exist $c, d\in\mathbb{R}$
such that
\[
f(x)=cx^{\alpha}+d(1-x)^{\alpha}-d
\]
holds for all $x\in ]0, 1[$.
\end{proof}

In what follows, we will show that for negative $\alpha$'s, the
parametric fundamental equation of information is stable also on the set $D$.

\begin{thm}\label{Thm5.1.2}
Let $\alpha, \varepsilon \in\mathbb{R}$ be fixed, $\alpha<0$, $\varepsilon\geq 0$.
Then the function $f:[0,1]\to\mathbb{R}$ satisfies the inequality
\eqref{Eq3.2.1} for all $(x, y)\in D$ if,
and only if, there exist $c, d\in\mathbb{R}$
such that
\begin{equation}\label{Eq3.2.8}
f(x)=\left\{
\begin{array}{lcl}
0, & \mbox{if} & x=0 \\
cx^{\alpha}+d\left(1-x\right)^{\alpha}-d, & \mbox{if} & x\in ]0,1[ \\
c-d, & \mbox{if} & x=1.
\end{array}
\right.
\end{equation}
\end{thm}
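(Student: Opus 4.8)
The plan is to treat the closed-domain statement as a boundary refinement of the open-domain hyperstability result Theorem \ref{T3.2.1}: the substantive analytic work is already done there, so here the only genuine task is to pin down the two endpoint values $f(0)$ and $f(1)$, and the whole point is that the negativity of $\alpha$ makes these values rigid.

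For the sufficiency (``if'') direction I would first observe that, under the convention \eqref{Eq1.1.1} that $0^{\alpha}=0$, the piecewise formula \eqref{Eq3.2.8} is nothing but the single expression $g(x)=cx^{\alpha}+d(1-x)^{\alpha}-d$ evaluated on all of $[0,1]$ (at $x=0$ it reads $d-d=0$, and at $x=1$ it reads $c-d$). I would then verify by direct substitution that $g$ satisfies the parametric fundamental equation of information \emph{exactly} on $D$, which makes \eqref{Eq3.2.1} hold with any $\varepsilon\geq 0$. The computation is short: expanding $(1-x)^{\alpha}g\!\left(\frac{y}{1-x}\right)=cy^{\alpha}+d(1-x-y)^{\alpha}-d(1-x)^{\alpha}$ and adding $g(x)$ yields $cx^{\alpha}+cy^{\alpha}+d(1-x-y)^{\alpha}-d$, which is symmetric in $x$ and $y$; hence the two sides of the equation coincide. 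The only care needed is the bookkeeping at the boundary points of $D$ (where $y=0$, or $x=0$, or $x+y=1$, so that an argument equals $0$ or $1$), where one checks that the convention keeps the displayed expansion valid; this is the most error-prone but entirely routine part.

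For the necessity (``only if'') direction I would argue in three steps. First, since $D^{\circ}\subset D$, the hypothesis gives \eqref{Eq3.2.1} on $D^{\circ}$, so Theorem \ref{T3.2.1} applies and produces constants $c,d\in\mathbb{R}$ with $f(x)=cx^{\alpha}+d(1-x)^{\alpha}-d$ for all $x\in\left]0,1\right[$; it then remains only to identify $f(0)$ and $f(1)$. Second, to get $f(0)$ I would set $x=0$ in \eqref{Eq3.2.1}, which (since then $\frac{y}{1-x}=y$ and $\frac{x}{1-y}=0$) collapses to $\left|f(0)\right|\cdot\left|1-(1-y)^{\alpha}\right|\leq\varepsilon$ for all $y\in\left]0,1\right[$; letting $y\to 1^{-}$ forces $(1-y)^{\alpha}\to+\infty$ because $\alpha<0$, and boundedness by $\varepsilon$ is possible only if $f(0)=0$. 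Third, to get $f(1)$ I would set $y=1-x$ with $x\in\left]0,1\right[$, so that $\frac{y}{1-x}=\frac{x}{1-y}=1$; substituting the open-interval form of $f(x)$ and $f(1-x)$ reduces \eqref{Eq3.2.1} to $\left|(c-d)-f(1)\right|\cdot\left|x^{\alpha}-(1-x)^{\alpha}\right|\leq\varepsilon$, and letting $x\to 0^{+}$ forces $x^{\alpha}\to+\infty$, whence $f(1)=c-d$. Together with the open-interval formula this is exactly \eqref{Eq3.2.8}.

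I do not expect a serious obstacle here, precisely because Theorem \ref{T3.2.1} carries the hard analysis (the $\alpha$-homogeneity and symmetrization of the auxiliary function $G$). The conceptual kernel of the present theorem is simply the recognition that $t\mapsto t^{\alpha}$ blows up as $t\to 0^{+}$ when $\alpha<0$, so that the stability inequality, tested along the two boundary curves $x=0$ and $y=1-x$, admits no slack at the endpoints — this is the hyperstability mechanism specialized to the corners of $D$. If anything is delicate, it is the sufficiency direction's verification at the boundary of $D$ under the convention $0^{\alpha}=0$; that is where I would be most careful to check every degenerate case.
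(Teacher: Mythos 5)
Your proposal is correct and follows essentially the same route as the paper: apply Theorem \ref{T3.2.1} on $D^{\circ}$ to get the form on $]0,1[$, then force $f(0)=0$ and $f(1)=c-d$ via boundary substitutions exploiting the blow-up of $t^{\alpha}$ as $t\to 0^{+}$, with the converse checked by direct computation. The only (immaterial) difference is that you substitute $x=0$ to pin down $f(0)$, whereas the paper substitutes $y=0$.
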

\begin{proof}
Let $y=0$ in \eqref{Eq3.2.1}. Then we have that
\[
\left(\left(1-x\right)^{\alpha}+1\right)\left|f\left(0\right)\right|\leq \varepsilon
\quad \left(x\in ]0,1[\right)
\]
Since $\alpha<0$, this yields that $f\left(0\right)=0$.
On the other hand, by Theorem \ref{T3.2.1},
\[
f\left(x\right)=c x^{\alpha}+d\left(1-x\right)^{\alpha}-d
\quad \left(x\in ]0, 1[\right)
\]
with some $c, d\in\mathbb{R}$. Finally, let $x\in ]0, 1[$ and $y=1-x$ in
\eqref{Eq3.2.1}. Then, again by Theorem \ref{T3.2.1}., there exist $c, d\in \mathbb{R}$
such that
\[
\left|c-d-f\left(1\right)\right|
\left|x^{\alpha}-\left(1-x\right)^{\alpha}\right|\leq
\varepsilon.
\]
Since $\alpha<0$, $f\left(1\right)=c-d$ follows. \\
The converse is an easy computation and it turns out that
$f$ defined by \eqref{Eq3.2.8} is a solution of \eqref{Eq1.1.6} on $D$.
\end{proof}

Our third main result in this section says that the system of
$\alpha$-recursive, $3$-semi-symmetric information measures is stable.

\begin{thm}\label{T3.2.3}
Let $n\geq 2$ be a fixed positive integer,
$\left(I_{n}\right)$ be the sequence of functions
$I_{n}:\Gamma^{\circ}_{n}\to\mathbb{R}$ and suppose that there exist a sequence
$\left(\varepsilon_{n}\right)$ of nonnegative real numbers and a real number
$\alpha<0$ such that
\begin{multline}\label{Eq3.2.9}
\left|I_{n}\left(p_{1}, \ldots, p_{n}\right)-\right. \\
\left. I_{n-1}\left(p_{1}+p_{2}, p_{3}, \ldots, p_{n}\right)-
\left(p_{1}+p_{2}\right)^{\alpha}I_{2}\left(\frac{p_{1}}{p_{1}+p_{2}},
\frac{p_{2}}{p_{1}+p_{2}}\right)\right|
\leq \varepsilon_{n-1}
\end{multline}
holds for all $n\geq 3$ and $\left(p_{1}, \ldots, p_{n}\right)\in\Gamma^{\circ}_{n}$,
and
\begin{equation}\label{Eq3.2.10}
\left|I_{3}\left(p_{1}, p_{2}, p_{3}\right)-I_{3}\left(p_{1}, p_{3}, p_{2}\right)\right|\leq
\varepsilon,
\end{equation}
holds on $D^{\circ}_{3}$.
Then there exist $a, b\in\mathbb{R}$ such that
\begin{equation}\label{Eq3.2.11}
\left|
I_{n}\left(p_{1}, \ldots, p_{n}\right)-
\left(a H^{\alpha}_{n}\left(p_{1}, \ldots, p_{n}\right)+b\left(p^{\alpha}_{1}-1\right)\right)\right|\leq \sum^{n-1}_{k=2}\varepsilon_{k}
\end{equation}
for all $n\geq 2$ and $\left(p_{1}, \ldots, p_{n}\right)\in\Gamma^{\circ}_{n}$,
where the convention $\sum^{1}_{k=2}\varepsilon_{k}=0$ is adopted.
\end{thm}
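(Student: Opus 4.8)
The plan is to reduce the whole statement to the level $n=2$ by exploiting the hyperstability already obtained in Theorem \ref{T3.2.1}, and then to propagate the estimate to every $n$ by a short induction. First I would introduce the one-variable function $f\colon\,]0,1[\,\to\mathbb{R}$, $f(x)=I_2(1-x,x)$, and extract from it an \emph{approximate} parametric fundamental equation of information. Applying the approximate recursivity \eqref{Eq3.2.9} with $n=3$ to the triples $(1-x-y,y,x)$ and $(1-x-y,x,y)$ (both of which lie in $\Gamma^\circ_3$ exactly when $(x,y)\in D^\circ$) yields
\begin{align*}
\left|I_3(1-x-y,y,x)-f(x)-(1-x)^\alpha f\!\left(\tfrac{y}{1-x}\right)\right|&\leq\varepsilon_2,\\
\left|I_3(1-x-y,x,y)-f(y)-(1-y)^\alpha f\!\left(\tfrac{x}{1-y}\right)\right|&\leq\varepsilon_2,
\end{align*}
and combining these with the approximate $3$-semi-symmetry \eqref{Eq3.2.10} through the triangle inequality gives
\[
\left|f(x)+(1-x)^\alpha f\!\left(\tfrac{y}{1-x}\right)-f(y)-(1-y)^\alpha f\!\left(\tfrac{x}{1-y}\right)\right|\leq 2\varepsilon_2+\varepsilon
\]
for all $(x,y)\in D^\circ$; that is, $f$ satisfies \eqref{Eq3.2.1} with the (finite) constant $2\varepsilon_2+\varepsilon$.

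Now comes the decisive step. Since $\alpha<0$, Theorem \ref{T3.2.1} applies and forces $f$ to be an \emph{exact} solution of the parametric fundamental equation of information, so there are $c,d\in\mathbb{R}$ with $f(x)=cx^\alpha+d(1-x)^\alpha-d$ for every $x\in\,]0,1[$. Rewriting this in terms of $H^\alpha_2$ and setting $a=(2^{1-\alpha}-1)c$ and $b=d-c$, a direct computation shows that
\[
I_2(1-x,x)=a H^\alpha_2(1-x,x)+b\bigl((1-x)^\alpha-1\bigr)
\]
holds \emph{with equality}; thus \eqref{Eq3.2.11} is valid for $n=2$ with error $0$, matching the convention $\sum_{k=2}^{1}\varepsilon_k=0$.

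For the induction I would write $J_n=a H^\alpha_n+b(p_1^\alpha-1)$ and first record the purely computational fact that $J_n$ is \emph{exactly} $\alpha$-recursive: this follows from the known recursivity of $(H^\alpha_n)$ together with the telescoping identity $b((p_1+p_2)^\alpha-1)+(p_1+p_2)^\alpha b\bigl((\tfrac{p_1}{p_1+p_2})^\alpha-1\bigr)=b(p_1^\alpha-1)$. Assuming \eqref{Eq3.2.11} at level $n$, I would apply the approximate recursivity \eqref{Eq3.2.9} at level $n+1$, the exact recursivity of $J_{n+1}$, and the induction hypothesis to the shortened distribution $(p_1+p_2,p_3,\dots,p_{n+1})\in\Gamma^\circ_n$. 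The crucial point is that the branching term $(p_1+p_2)^\alpha I_2(\tfrac{p_1}{p_1+p_2},\tfrac{p_2}{p_1+p_2})$ coincides with $(p_1+p_2)^\alpha J_2(\tfrac{p_1}{p_1+p_2},\tfrac{p_2}{p_1+p_2})$ \emph{exactly}, because the $n=2$ case holds with equality; hence it contributes nothing to the error, and the triangle inequality gives
\[
\left|I_{n+1}-J_{n+1}\right|\leq \varepsilon_n+\sum_{k=2}^{n-1}\varepsilon_k=\sum_{k=2}^{n}\varepsilon_k,
\]
which is precisely \eqref{Eq3.2.11} at level $n+1$.

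The only genuinely delicate ingredient is the passage to the exact form of $f$ via hyperstability; everything else is bookkeeping. It is exactly this exactness at the base level $n=2$ that explains why the final estimate \eqref{Eq3.2.11} is so clean: no multiplicative factor $n-1$ and no contribution of the $3$-semi-symmetry constant survive, in sharp contrast with the merely stable case treated in Theorem \ref{T3.1.3}, where the nonzero base-level error $K(\alpha)(2\varepsilon_2+\varepsilon_1)$ genuinely propagates through the induction.
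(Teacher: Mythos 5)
Your proposal is correct and follows essentially the same route as the paper's own proof: reduce to the base case $n=2$ via the derived approximate fundamental equation and the hyperstability result of Theorem \ref{T3.2.1} (with the same constants $a=(2^{1-\alpha}-1)c$, $b=d-c$), then induct on $n$ using the exact $\alpha$-recursivity of $J_n=aH^{\alpha}_n+b(p_1^{\alpha}-1)$ and the fact that the level-$2$ identity holds with zero error. Your explicit verification of the telescoping identity for the $b$-term and the closing remark contrasting this with the error propagation in Theorem \ref{T3.1.3} are accurate elaborations of steps the paper leaves implicit.
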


\begin{proof}
As in Maksa \cite{Mak08}, it can be proved that, due to
\eqref{Eq3.2.9} and \eqref{Eq3.2.10}, for the function $f$
defined by $f(x)=I_{2}\left(1-x, x\right)$, $x\in ]0,1[$ we get that
\[
\left|
f\left(x\right)+\left(1-x\right)^{\alpha}f\left(\frac{y}{1-x}\right)-
f\left(y\right)-\left(1-y\right)^{\alpha}f\left(\frac{x}{1-y}\right)\right|
\leq 2\varepsilon_{2}+\varepsilon_{1}
\]
for all $\left(x, y\right)\in D^{\circ}$, i.e., \eqref{Eq3.2.1}
holds with $\varepsilon=2\varepsilon_{2}+\varepsilon_{1}$.
Therefore, applying Theorem \ref{T3.2.1}., we obtain
\eqref{Eq3.2.2} with some $c, d\in\mathbb{R}$, i.e.,
\[
I_{2}\left(1-x, x\right)=cx^{\alpha}+ d \left(1-x\right)^{\alpha}-d,
\quad \left(x\in ]0,1[\right)
\]
i.e., \eqref{Eq3.2.11} holds for $n=2$ with $a=(2^{1-\alpha}-1)c$, $b=d-c$.

We continue the proof by induction on $n$.
Suppose that \eqref{Eq3.2.11} holds and, for the sake of brevity, introduce the
notation
\[
J_{n}\left(p_{1}, \ldots, p_{n}\right)=
a H^{\alpha}_{n}\left(p_{1}, \ldots, p_{n}\right)+b\left(p^{\alpha}_{1}-1\right)
\]
for all $n\geq 2$, $\left(p_{1}, \ldots, p_{n}\right)\in\Gamma^{\circ}_{n}$.
It can easily be seen that \eqref{Eq3.2.9} and \eqref{Eq3.2.10} hold on
$\Gamma^{\circ}_{n}$ for $J_{n}$ instead of $I_{n}$ $(n\geq 3)$ with
$\varepsilon_{n}=0$ $(n\geq 2)$.
Thus for all $\left(p_{1}, \ldots, p_{n+1}\right)\in\Gamma^{\circ}_{n+1}$,
we get that

\begin{multline*}
I_{n+1}\left(p_{1}, \ldots, p_{n+1}\right)-J_{n+1}\left(p_{1}, \ldots, p_{n+1}\right) \\
= I_{n+1}\left(p_{1}, \ldots, p_{n+1}\right)-
J_{n}\left(p_{1}+p_{2}, p_{3}, \ldots, p_{n+1}\right)\\-
\left(p_{1}+p_{2}\right)^{\alpha}
J_{2}\left(\frac{p_{1}}{p_{1}+p_{2}}, \frac{p_{2}}{p_{1}+p_{2}}\right) \\
= I_{n+1}\left(p_{1}, \ldots, p_{n+1}\right)-
I_{n}\left(p_{1}+p_{2}, p_{3}, \ldots, p_{n+1}\right)\\-
\left(p_{1}+p_{2}\right)^{\alpha}
I_{2}\left(\frac{p_{1}}{p_{1}+p_{2}}, \frac{p_{2}}{p_{1}+p_{2}}\right) \\
+ I_{n}\left(p_{1}+p_{2}, p_{3}, \ldots, p_{n+1}\right)-
J_{n}\left(p_{1}+p_{2}, p_{3}, \ldots, p_{n+1}\right) \\
+ \left(p_{1}+p_{2}\right)^{\alpha}
I_{2}\left(\frac{p_{1}}{p_{1}+p_{2}}, \frac{p_{2}}{p_{1}+p_{2}}\right)\\-
\left(p_{1}+p_{2}\right)^{\alpha}
J_{2}\left(\frac{p_{1}}{p_{1}+p_{2}}, \frac{p_{2}}{p_{1}+p_{2}}\right).
\end{multline*}

Therefore \eqref{Eq3.2.11} with $n=2$ and the induction hypothesis imply that
\[
\left|
I_{n+1}\left(p_{1}, \ldots, p_{n+1}\right)-
J_{n}\left(p_{1}, \ldots, p_{n+1}\right)\right|\leq
\varepsilon_{n}+\sum^{n-1}_{k=2}\varepsilon_{k}=
\sum^{n}_{k=2}\varepsilon_{k},
\]
that is, \eqref{Eq3.2.11} holds for $n+1$ instead of $n$.
\end{proof}

\begin{cor}
Applying Theorem \ref{T3.2.3}. with the choice $\varepsilon_{n}=0$
for all $n\in\mathbb{N}$, we get the $\alpha$-recursive, $3$-semi-symmetric
information measures. Hence the previous theorem says that the system of
$\alpha$-recursive and $3$-semi-symmetric information measures is stable.
\end{cor}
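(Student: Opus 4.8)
The plan is to obtain the Corollary as the $\varepsilon_n \equiv 0$ specialization of Theorem \ref{T3.2.3}, which was proved for an \emph{arbitrary} nonnegative error sequence. First I would set $\varepsilon_n = 0$ for every $n \in \mathbb{N}$ (and, correspondingly, $\varepsilon = 0$ in \eqref{Eq3.2.10}) and inspect what the hypotheses \eqref{Eq3.2.9} and \eqref{Eq3.2.10} become. With vanishing right-hand sides they reduce to the exact identities
\[
I_n(p_1,\ldots,p_n) = I_{n-1}(p_1+p_2,p_3,\ldots,p_n) + (p_1+p_2)^{\alpha} I_2\!\left(\tfrac{p_1}{p_1+p_2},\tfrac{p_2}{p_1+p_2}\right)
\]
and $I_3(p_1,p_2,p_3) = I_3(p_1,p_3,p_2)$, that is, precisely the $\alpha$-recursivity \eqref{Eq1.1.5} and the $3$-semi-symmetry \eqref{Eq1.1.3}. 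Conversely, every $\alpha$-recursive, $3$-semi-symmetric $(I_n)$ trivially satisfies \eqref{Eq3.2.9} and \eqref{Eq3.2.10} with all error constants zero, so under this choice the admissible sequences $(I_n)$ are exactly the $\alpha$-recursive, $3$-semi-symmetric information measures.

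Next I would feed this into the conclusion \eqref{Eq3.2.11}. Since now $\sum_{k=2}^{n-1}\varepsilon_k = 0$, the bound there collapses to an equality, yielding constants $a,b \in \mathbb{R}$ with
\[
I_n(p_1,\ldots,p_n) = a\,H^{\alpha}_n(p_1,\ldots,p_n) + b\,(p_1^{\alpha}-1)
\]
for all $n \geq 2$ and $(p_1,\ldots,p_n) \in \Gamma^{\circ}_n$. This is the known closed form of the $\alpha$-recursive, $3$-semi-symmetric information measures, so the specialization simultaneously recovers that exact characterization and exhibits it as the $\varepsilon = 0$ instance of Theorem \ref{T3.2.3}.

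Finally I would record the stability reading. Theorem \ref{T3.2.3} asserts that whenever $(I_n)$ satisfies the defining axioms only up to errors $\varepsilon_n$, it still lies within $\sum_{k=2}^{n-1}\varepsilon_k$ of a genuine member $a H^{\alpha}_n + b(p_1^{\alpha}-1)$ of this family, and this deviation tends to $0$ as the error constants do; combined with the $\varepsilon_n = 0$ computation above, this is exactly the assertion that the system of $\alpha$-recursive, $3$-semi-symmetric information measures is stable. There is essentially no obstacle here: the entire analytic content already sits in Theorem \ref{T3.2.3} (and, through it, in the hyperstability Theorem \ref{T3.2.1} and the representation result of Maksa invoked in its proof), so that the Corollary is a direct specialization and interpretation of that theorem.
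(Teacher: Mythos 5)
Your proposal is correct and follows exactly the paper's own reasoning: the corollary is proved simply by specializing Theorem \ref{T3.2.3} to $\varepsilon_{n}=0$, observing that the hypotheses \eqref{Eq3.2.9}--\eqref{Eq3.2.10} then become exact $\alpha$-recursivity and $3$-semi-symmetry, and that the bound \eqref{Eq3.2.11} collapses to the exact representation $I_{n}=aH^{\alpha}_{n}+b(p_{1}^{\alpha}-1)$, which is precisely the stability interpretation the paper intends.
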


\subsection{Related equations}

In the previous subsections we have investigated the stability problem
of the parametric fundamental equation of information.
In the remaining part of our paper, we will discuss the stability problem
of some functional equation that also have information theoretical background.
Firstly, we will show that the so-called \emph{entropy equation} is stable
on it domain. After that some results concerning the
\emph{modified entropy equation} will follow.
Finally, we will end this section with some open problems.

\subsubsection{Stability of the entropy equation}

In what follows, our aim  is to prove that \emph{the entropy equation}, i.e., equation
\begin{equation}\label{Eq3.3.1}
H\left(x, y, z\right)=H\left(x+y, 0, z\right)+H\left(x, y, 0\right)
\end{equation}
is stable on the set
\[
 C=\left\{(x, y, z)\in\mathbb{R}^{3}\vert x\geq 0, y\geq 0, z\geq 0, x+y+z>0\right\}.
\]

In \cite{KamMik74} A.~Kami\'{n}ski and J.~Mikusi\'{n}ski
determined the continuous and $1$-homoge\-neous solutions of
equation (\ref{Eq3.3.1}) on the set $\mathbb{R}^{3}$.
This result was strengthened by
J.~Acz\'{e}l in \cite{Acz77}. After that, using a result of Jessen--Karpf--Thorup \cite{JesKarTho68},
which concerns the solution of
the cocycle equation, Z.~Dar\'{o}czy proved the following (see \cite{Dar76}).

\begin{thm}\label{T2.3.1}
If a function $H:C\to\mathbb{R}$ is symmetric in $C$ and
satisfies the equation \eqref{Eq3.3.1} in the interior of $C$ and
the map $(x, y)\mapsto H\left(x, y, 0\right)$ is positively
homogeneous (of order $1$) for all $x, y\in\mathbb{R}_{++}$,
then there exists a function
$\varphi:\mathbb{R}_{++}\to\mathbb{R}$ such that
\[
\varphi\left(xy\right)=x\varphi\left(y\right)+y\varphi\left(x\right)
\]
holds for all $x, y \in\mathbb{R}_{++}$ and
\[
H\left(x, y, z\right)=\varphi\left(x+y+z\right)-\varphi(x)-\varphi(y)-\varphi(z)
\]
for all $\left(x, y, z\right)\in C$.
\end{thm}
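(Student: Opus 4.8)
The plan is to reduce the three–variable entropy equation to the two–variable cocycle equation and then invoke the structure theorem of Jessen, Karpf and Thorup. First I would set $F(x,y)=H(x,y,0)$ for $x,y\ge 0$ with $x+y>0$. The symmetry of $H$ gives at once that $F$ is symmetric, the hypothesis gives that $F$ is positively homogeneous of order $1$, and, writing $H(x+y,0,z)=H(x+y,z,0)=F(x+y,z)$, the entropy equation \eqref{Eq3.3.1} becomes $H(x,y,z)=F(x+y,z)+F(x,y)$ on the interior of $C$. Permuting the three arguments of $H$ by its symmetry (for instance, comparing the groupings arising from $H(x,y,z)$ and $H(y,z,x)$) and using that $F$ is symmetric, I would obtain the cocycle equation $F(x+y,z)+F(x,y)=F(x,y+z)+F(y,z)$ for all $x,y,z>0$.

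At this point the information comes from the algebraic theory of the cocycle equation. Applying the Jessen--Karpf--Thorup result to the symmetric cocycle $F$ produces a function $\varphi$ on $\mathbb{R}_{++}$ with $F(x,y)=\varphi(x+y)-\varphi(x)-\varphi(y)$; the extra hypothesis that $F$ is homogeneous of order $1$ is exactly what forces the chosen $\varphi$ to satisfy $\varphi(xy)=x\varphi(y)+y\varphi(x)$. Equivalently, homogeneity lets me write $F(x,y)=(x+y)f\!\left(\tfrac{y}{x+y}\right)$ with $f(t)=F(1-t,t)$, and the cocycle equation turns into the fundamental equation of information for $f$, whose general solution is $f(x)=\varphi(x)+\varphi(1-x)$ with $\varphi$ as above, the symmetry $f(t)=f(1-t)$ killing any extra linear term. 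I expect this to be the main obstacle: the coboundary representation determines $\varphi$ only up to an additive function (and a priori admits a symmetric biadditive correction), and one must use the degree-$1$ homogeneity to discard that correction and to select the representative obeying the multiplicative-derivation identity. This is precisely the delicate algebraic core handled by the Jessen--Karpf--Thorup machinery.

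With $\varphi$ in hand the reconstruction is a telescoping computation. On the interior I would substitute $F(a,b)=\varphi(a+b)-\varphi(a)-\varphi(b)$ into $H(x,y,z)=F(x+y,z)+F(x,y)$ and obtain $H(x,y,z)=\varphi(x+y+z)-\varphi(x)-\varphi(y)-\varphi(z)$, since the two occurrences of $\varphi(x+y)$ cancel.

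It remains to pass to the boundary of $C$, adopting the convention $\varphi(0)=0$ (consistent with \eqref{Eq1.1.1} and with $\varphi(1)=0$, which follows from the derivation identity). For a point with one vanishing coordinate the formula is immediate from the definition $H(x,y,0)=F(x,y)=\varphi(x+y)-\varphi(x)-\varphi(y)$ together with the symmetry of $H$. The only genuinely separate check is at the corners, where two coordinates vanish: there I would verify $H(x,0,0)=0$, which is what the claimed formula predicts, using the homogeneity of $F$ and the representation already established. Combining the interior formula with these boundary values then yields $H(x,y,z)=\varphi(x+y+z)-\varphi(x)-\varphi(y)-\varphi(z)$ throughout $C$, completing the proof.
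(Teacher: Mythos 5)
Your reduction is exactly the route the paper attributes to Dar\'oczy and itself follows (with $\varepsilon>0$) in the proof of Theorem \ref{T3.3.3}: pass to $F(x,y)=H(x,y,0)$, use the symmetry of $H$ to convert \eqref{Eq3.3.1} into the cocycle equation for $F$, and invoke the Jessen--Karpf--Thorup theorem for symmetric, $1$-homogeneous cocycles to get $F(x,y)=\varphi(x+y)-\varphi(x)-\varphi(y)$ with $\varphi(xy)=x\varphi(y)+y\varphi(x)$. The telescoping reconstruction of $H$ in the interior, and the identification of the formula at boundary points with exactly one vanishing coordinate (via $F$ and the symmetry of $H$, with the convention $\varphi(0)=0$), are both correct.

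The gap is your corner step, and it cannot be repaired from the hypotheses as stated. The equation is assumed only in the interior of $C$, and the two boundary points it involves there, $(x+y,0,z)$ and $(x,y,0)$, each have exactly \emph{one} zero coordinate; the symmetry hypothesis merely permutes the corner values $H(x,0,0)$, $H(0,x,0)$, $H(0,0,x)$ among themselves; and homogeneity is assumed only for $x,y\in\mathbb{R}_{++}$. Hence no hypothesis ever constrains a point with two vanishing coordinates: starting from any solution of the desired form, one can redefine $H(x,0,0)=H(0,x,0)=H(0,0,x)=g(x)$ with an arbitrary function $g$ and all assumptions remain valid, while $H(x,0,0)=0$ fails. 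So your proposed verification of $H(x,0,0)=0$ ``using the homogeneity of $F$ and the representation already established'' cannot go through --- neither homogeneity nor the representation says anything about corners. To pin down the corners one needs the equation on all of $C$, in which case $y=z=0$ gives $H(x,0,0)=H(x,0,0)+H(x,0,0)$, i.e. $H(x,0,0)=0$ immediately; this is presumably the reading intended in Dar\'oczy's original paper. It is consistent with this subtlety that the paper's own $\varepsilon=0$ consequence of Theorem \ref{T3.3.3} asserts the representation only on $C^{\circ}$, not on all of $C$.
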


During the proof of the main result the stability of the \emph{cocycle equation} is needed.
This theorem can be found in \cite{Sze95}.

\begin{thm}\label{T3.3.2}
Let $S$ be a right amenable semigroup and let
$F:S\times S\to\mathbb{C}$ be a function, for which the function
\begin{equation}\label{Eq3.3.2}
\left(x, y, z\right)\longmapsto
F\left(x, y\right)+F\left(x+y, z\right)-F\left(x, y+z\right)-F\left(y, z\right)
\end{equation}
is bounded on $S\times S\times S$.
Then there exists a function
$\Psi:S\times S\to\mathbb{C}$ satisfying the cocycle equation,
i.e.,
\begin{equation}\label{Eq3.3.3}
\Psi\left(x, y\right)+\Psi\left(x+y, z\right)=\Psi\left(x, y+z\right)+\Psi\left(y, z\right)
\end{equation}
for all $x, y, z \in S$ and for which the function $F-\Psi$ is bounded by the same constant as
the map defined by \eqref{Eq3.3.2}.
\end{thm}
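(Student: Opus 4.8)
The plan is to prove this by the method of invariant means, since the right amenability of $S$ is the hypothesis naturally tailored to the cocycle equation. Write $\Lambda(x,y,z)=F(x,y)+F(x+y,z)-F(x,y+z)-F(y,z)$ for the map appearing in \eqref{Eq3.3.2} and let $\delta=\sup_{x,y,z\in S}|\Lambda(x,y,z)|<\infty$ be its bound. First I would fix a right invariant mean $M$ on the space of bounded functions $S\to\mathbb{C}$; applying it separately to real and imaginary parts one obtains a complex-linear functional, still denoted $M$, satisfying $|M(g)|\le\sup_{w}|g(w)|$ and the right invariance $M_{w}[g(w+a)]=M_{w}[g(w)]$ for every fixed $a\in S$, where $M_w$ indicates that the mean is taken in the variable $w$.

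The candidate is
\[
\Psi(x,y)=M_{w}\!\left[F(w,x)+F(w+x,y)-F(w,x+y)\right].
\]
For this to make sense one must check boundedness of the integrand, and the crucial point is that although the individual terms $F(w,x)$, $F(w+x,y)$, $F(w,x+y)$ need not be bounded in $w$, their combination is: indeed $F(w,x)+F(w+x,y)-F(w,x+y)=F(x,y)+\Lambda(w,x,y)$, which for fixed $x,y$ is a bounded function of $w$. From this same identity together with $|M(g)|\le\sup|g|$ one immediately obtains the required estimate, namely $\Psi(x,y)-F(x,y)=M_{w}[\Lambda(w,x,y)]$, so that $|F(x,y)-\Psi(x,y)|\le\delta$ for all $x,y\in S$.

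It then remains to verify that $\Psi$ solves the cocycle equation \eqref{Eq3.3.3}. I would substitute the definition into the four-term combination $\Psi(x,y)+\Psi(x+y,z)-\Psi(x,y+z)-\Psi(y,z)$ and use linearity of the mean to write it as $M_{w}[E(w)]$, where $E(w)$ is the corresponding alternating sum of the four integrands; each integrand is bounded, hence so is $E$, and $M_w[E]$ is legitimate. The heart of the matter is the bookkeeping identity
\[
E(w)=\Phi(w+x)-\Phi(w),\qquad \Phi(w)=F(w,y)+F(w+y,z)-F(w,y+z),
\]
which one confirms by cancelling the many $F$-terms; here $\Phi$ is again bounded by the argument above (it equals $F(y,z)+\Lambda(w,y,z)$). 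Right invariance then yields $M_{w}[\Phi(w+x)]=M_{w}[\Phi(w)]$, whence $M_{w}[E]=0$, which is precisely \eqref{Eq3.3.3}.

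The main obstacle, and the place where care is genuinely needed, is twofold. First, the mean may be applied only to bounded functions, so throughout one must group the values of $F$ into the specific combinations $F(\cdot,a)+F(\cdot+a,b)-F(\cdot,a+b)=\Lambda(\cdot,a,b)+F(a,b)$ that boundedness guarantees, and never to single values of $F$. Second, the handedness must match: because the averaging variable $w$ is introduced as the left-most argument, the translation appearing in the cocycle computation is $w\mapsto w+x$ acting on the right, so it is exactly right invariance of $M$ that is invoked — had $w$ been inserted on the right instead, left amenability would have been required. Once these two points are respected, the remaining cancellations are routine.
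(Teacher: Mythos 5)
Your proof is correct, and it is essentially the argument behind the result as the paper uses it: the paper does not prove Theorem \ref{T3.3.2} itself but cites Sz\'ekelyhidi \cite{Sze95}, whose proof is exactly this invariant-mean construction --- averaging the bounded combination $F(w,x)+F(w+x,y)-F(w,x+y)=F(x,y)+\Lambda(w,x,y)$ over $w$ and using right invariance to kill the difference $\Phi(w+x)-\Phi(w)$. Your handling of the two delicate points (applying the mean only to the bounded groupings, and matching right translation with right amenability) is exactly what makes that argument work, so nothing is missing.
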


About the symmetric, $1$--homogeneous solutions of the cocycle equation  one
can read in \cite{JesKarTho68}.
Furthermore, the symmetric and $\alpha$--homogeneous solutions of equation
\eqref{Eq3.3.3} can be found in \cite{Mak82}, as a consequence of Theorem 3.
The general solution of the cocycle equation without symmetry and homogeneity assumptions,
on cancellative abelian semigroups
was determined by M.~Hossz\'{u} in \cite{Hos71}.

Our main result concerning the stability of equation \eqref{Eq3.3.1} is the following, see also \cite{Gse10}.
\begin{thm}\label{T3.3.3}
Let $\varepsilon_{1}, \varepsilon_{2}, \varepsilon_{3}$ be arbitrary
nonnegative real numbers, $\alpha\in\mathbb{R}$, and assume that the function
$H:C\to\mathbb{R}$ satisfies the following system of inequalities.
\begin{equation}\label{Eq3.3.4}
\left|H(x, y, z)-H\left(\sigma(x), \sigma(y), \sigma(z)\right)\right|\leq \varepsilon_{1}
\end{equation}
for all $(x, y, z)\in C$ and for all
$\sigma:\left\{x, y, z\right\}\mapsto\left\{x, y, z\right\}$
permutation;
\begin{equation}\label{Eq3.3.5}
\left|H\left(x, y, z\right)-H\left(x+y, 0, z\right)-H\left(x, y, 0\right)\right|\leq \varepsilon_{2}
\end{equation}
for all $(x, y, z)\in C^{\circ}$, where $C^{\circ}$ denotes the interior of the set $C$;
\begin{equation}\label{Eq3.3.6}
\left|H\left(tx, ty, 0\right)-t^{\alpha}H(x, y, 0)\right|\leq \varepsilon_{3}
\end{equation}
holds for all $t, x, y\in\mathbb{R}_{++}$.
Then, in case $\alpha=1$ there exists a function $\varphi:\mathbb{R}_{++}\to\mathbb{R}$ which satisfies
the functional equation
\[
\varphi\left(xy\right)=x\varphi\left(y\right)+y\varphi\left(x\right),
\quad \left(x, y\in\mathbb{R}_{++}\right)
\]
and
\begin{equation}\label{Eq3.3.7}
\left|H\left(x, y, z\right)-
\left[\varphi\left(x+y+z\right)-\varphi\left(x\right)-\varphi\left(y\right)-\varphi\left(z\right)\right]\right|
\leq \varepsilon_{1}+\varepsilon_{2}
\end{equation}
holds for all $\left(x, y, z\right)\in C^{\circ}$;
in case $\alpha=0$ there exists a constant $a\in\mathbb{R}$ such that
\begin{equation}\label{Eq3.3.8}
\left|H\left(x, y, z\right)-a\right|\leq
8\varepsilon_{3}+25\varepsilon_{2}+49\varepsilon_{1}
\end{equation}
for all $\left(x, y, z\right)\in C^{\circ}$;
finally, in all other cases there exists a constant $c\in\mathbb{R}$ such that
\begin{equation}\label{Eq3.3.9}
\left|H\left(x, y, z\right)-c\left[\left(x+y+z\right)^{\alpha}-x^{\alpha}-y^{\alpha}-z^{\alpha}\right]\right|
\leq \varepsilon_{1}+\varepsilon_{2}
\end{equation}
holds on $C^{\circ}$.
\end{thm}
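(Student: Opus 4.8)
The plan is to push everything onto the two-variable function $F(x,y)=H(x,y,0)$ and to read the entropy equation \eqref{Eq3.3.1} as a disguised cocycle equation. First I would record the basic decomposition: applying \eqref{Eq3.3.5} once and \eqref{Eq3.3.4} once (to turn $H(x+y,0,z)$ into $H(x+y,z,0)=F(x+y,z)$) gives
\[
\left|H(x,y,z)-F(x+y,z)-F(x,y)\right|\le \varepsilon_1+\varepsilon_2
\]
on $C^{\circ}$, while the same computation run on the cyclically permuted triple $(y,z,x)$ yields $\left|H(x,y,z)-F(x,y+z)-F(y,z)\right|\le 2\varepsilon_1+\varepsilon_2$. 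Subtracting the two estimates shows that the cocycle difference
\[
F(x,y)+F(x+y,z)-F(x,y+z)-F(y,z)
\]
is bounded by $3\varepsilon_1+2\varepsilon_2$ on $\mathbb{R}_{++}^{3}$, i.e.\ $F$ satisfies the cocycle equation \eqref{Eq3.3.3} approximately. The carrier is the commutative, hence amenable, semigroup $(\mathbb{R}_{++},+)$, so Theorem \ref{T3.3.2} is at hand.

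The decisive point is a superstability phenomenon in the scaling variable, controlled by \eqref{Eq3.3.6}. When $\alpha\neq 0$ I would first upgrade $F$ to being \emph{exactly} $\alpha$-homogeneous: writing $h(t)=F(tx,ty)$ for a fixed base point and computing $h(ts)$ in two ways, \eqref{Eq3.3.6} forces $\left|F(sx,sy)-s^{\alpha}F(x,y)\right|\le 2\varepsilon_3\,t^{-\alpha}$ for every $t$, and letting $t\to\infty$ (if $\alpha>0$) or $t\to 0^{+}$ (if $\alpha<0$) annihilates the right-hand side. Once $F$ is genuinely $\alpha$-homogeneous, replacing $(x,y,z)$ by $(tx,ty,tz)$ in the bounded cocycle difference and in $\left|F(x,y)-F(y,x)\right|\le\varepsilon_1$ and taking the same limit promotes both to equalities: $F$ becomes an exact, symmetric, $\alpha$-homogeneous solution of the cocycle equation. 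This is the step that consumes \eqref{Eq3.3.4}--\eqref{Eq3.3.6} completely and is exactly why the constants in \eqref{Eq3.3.7} and \eqref{Eq3.3.9} collapse back to $\varepsilon_1+\varepsilon_2$.

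It then remains to insert the known classifications. For $\alpha=1$, Theorem \ref{T2.3.1} (equivalently the cocycle analysis of \cite{JesKarTho68}) gives $F(x,y)=\varphi(x+y)-\varphi(x)-\varphi(y)$ with $\varphi(xy)=x\varphi(y)+y\varphi(x)$; for $\alpha\notin\{0,1\}$ the symmetric $\alpha$-homogeneous cocycles are $F(x,y)=c\big[(x+y)^{\alpha}-x^{\alpha}-y^{\alpha}\big]$ by \cite{Mak82}. Substituting either form into the first displayed inequality telescopes $F(x+y,z)+F(x,y)$ into $\varphi(x+y+z)-\varphi(x)-\varphi(y)-\varphi(z)$, respectively $c\big[(x+y+z)^{\alpha}-x^{\alpha}-y^{\alpha}-z^{\alpha}\big]$, giving precisely \eqref{Eq3.3.7} and \eqref{Eq3.3.9} with error $\varepsilon_1+\varepsilon_2$.

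The genuinely hard case is $\alpha=0$, exactly because the scaling trick degenerates ($t^{-\alpha}\equiv 1$) and no exactness survives from \eqref{Eq3.3.6}. Here I would invoke Theorem \ref{T3.3.2} to replace $F$ by a true cocycle $\Psi$ with $\left|F-\Psi\right|\le 3\varepsilon_1+2\varepsilon_2$, note that $\Psi$ inherits approximate $0$-homogeneity and approximate symmetry, and then reduce $\Psi$ to its one-variable trace $\phi(s)=\Psi(s,1-s)$ on $]0,1[$, which now approximately satisfies the $\alpha=0$ fundamental equation of information. Theorem \ref{T3.1.1} identifies $\phi$ as $\ell(1-\cdot)+c$ up to a bounded error for some logarithmic $\ell$, and the approximate symmetry forces $\left|\ell(x)-\ell(1-x)\right|$ to be bounded; since a logarithmic function bounded on $\mathbb{R}_{++}$ vanishes identically, $\phi$ lies within a controlled constant of a single constant, and \eqref{Eq3.3.8} follows after propagating the estimate back through $\Psi$ and the first displayed inequality. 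I expect the main obstacle to be precisely this case: keeping the chain of triangle inequalities honest (which is what manufactures the explicit coefficients $8\varepsilon_3+25\varepsilon_2+49\varepsilon_1$) and justifying that the logarithmic component can be genuinely annihilated rather than merely bounded.
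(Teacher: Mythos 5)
Your handling of the cases $\alpha\neq 0$ is correct, and it actually streamlines the paper's argument. The paper's proof (sketched here, details in \cite{Gse10}) also reduces to $F(x,y)=H(x,y,0)$, derives approximate symmetry and an approximate cocycle inequality, and upgrades \eqref{Eq3.3.6} to exact $\alpha$-homogeneity by the same two-scalings trick; but it then invokes Theorem \ref{T3.3.2} (invariant means on the amenable semigroup) to produce a nearby exact cocycle, decomposes that cocycle by Hossz\'u's theorem \cite{Hos71}, shows the antisymmetric part vanishes, and only afterwards transfers exactness back to $F$. You instead scale the approximate cocycle and symmetry inequalities for $F$ itself: exact homogeneity makes the defect at $(tx,ty,tz)$ equal to $t^{\alpha}$ times the defect at $(x,y,z)$, while it stays uniformly bounded, so letting $t^{\alpha}\to\infty$ (i.e.\ $t\to\infty$ for $\alpha>0$, $t\to 0^{+}$ for $\alpha<0$) kills it. This is precisely the hyperstability device used in the proof of Theorem \ref{T3.2.1}; it renders Theorem \ref{T3.3.2} and Hossz\'u's decomposition unnecessary when $\alpha\neq 0$, and your endgame (classification via \cite{JesKarTho68} for $\alpha=1$ and \cite{Mak82} for $\alpha\notin\{0,1\}$, then telescoping in the first displayed inequality) correctly yields \eqref{Eq3.3.7} and \eqref{Eq3.3.9} with constant $\varepsilon_1+\varepsilon_2$. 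Your defect bound $3\varepsilon_1+2\varepsilon_2$ is even slightly sharper than the paper's $4\varepsilon_1+2\varepsilon_2$.

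The genuine gap is the case $\alpha=0$, and it is not merely a matter of ``keeping the triangle inequalities honest.'' Routing through Theorem \ref{T3.1.1} structurally cannot produce the stated bound \eqref{Eq3.3.8}. First, the displayed formula for $K(\alpha)$ is unusable at $\alpha=0$ (the factor $\left|2^{-\alpha}-1\right|$ in a denominator vanishes); the constant actually delivered by Case~I of that proof is $63$. Second, before applying it you must convert the exact cocycle equation for $\Psi$ together with its \emph{approximate} $0$-homogeneity into an approximate fundamental equation of information for the trace; the homogeneity error is already $\varepsilon_3+2(3\varepsilon_1+2\varepsilon_2)$ after transferring from $F$ to $\Psi$, and it is consumed several times in the normalization, so the resulting defect $\varepsilon'$ has coefficients of order several units in each $\varepsilon_i$. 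Multiplying by $63$, then paying again to annihilate the logarithmic term, to de-homogenize back from the simplex to all of $\mathbb{R}_{++}^{2}$, and to return from $F$ to $H$, gives coefficients in the hundreds --- a valid stability estimate, but strictly weaker than $8\varepsilon_3+25\varepsilon_2+49\varepsilon_1$. The constants of that size require working directly with the Hossz\'u representation $G(x,y)=f(x+y)-f(x)-f(y)$ of the nearby cocycle (as the paper's sketch indicates and \cite{Gse10} carries out), not with Theorem \ref{T3.1.1}. A smaller point in the same case: boundedness of $\ell(x)-\ell(1-x)$ on $]0,1[$ does not by itself bound $\ell$; you need the extension of $\ell$ to $\mathbb{R}_{++}$ and the substitution $x=p/(p+q)$, exactly as in the proof of Theorem \ref{T3.1.2}, before concluding that a bounded logarithmic function vanishes.
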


\begin{proof}
For the sake of brevity, here we present only the sketch of proof of the above statemant. 
For details, the reader should consult \cite{Gse10}. 

Using inequality \eqref{Eq3.3.6} it can be shown that the map
\[
 (x, y)\mapsto H\left(x, y, 0\right) 
\qquad 
\left(x, y\in\mathbb{R}_{++}\right)
\]
is homogeneous of degree $\alpha$, assuming that $\alpha\neq 0$. 

Let us consider the function $F\colon \mathbb{R}^{2}_{++}\to\mathbb{R}$ defined by 
\[
 F(x, y)=H(x, y, 0)
\qquad 
\left(x, y\in\mathbb{R}_{++}\right). 
\]
From inequalities \eqref{Eq3.3.4} and \eqref{Eq3.3.5} we can deduce that 
\begin{equation}\label{Eq3.3.16}
\left|F(x, y)-F(y, x)\right|\leq \varepsilon_{1}, \quad \left(x, y\in \mathbb{R}_{++}\right)
\end{equation}
and 
\begin{equation}\label{Eq3.3.17}
\left|F(x+y, z)+
F(x, y)-
F(x, y+z)-
F(y, z)\right|\leq 2\varepsilon_{2}+4\varepsilon_{1}.
\quad (x, y, z\in\mathbb{R}_{++})
\end{equation}. 
Furthermore, in case $\alpha\neq 0$, $H(x, y, 0)$ is
homogeneous of degree $\alpha$, therefore
\begin{equation}\label{Eq3.3.18}
F(tx, ty)=t^{\alpha}F(x, y) \quad \left(\alpha\neq 0, t, x, y \in\mathbb{R}_{++}\right)
\end{equation}
and if $\alpha=0$, 
\begin{equation}\label{Eq3.3.19}
\left|F(tx, ty)-F(x, y)\right|\leq \varepsilon_{3},  \quad \left(t, x, y\in\mathbb{R}_{++}\right)
\end{equation}
is fulfilled. 

The set $C^{\circ}$ is a commutative semigroup with the usual addition.
Thus it is amenable, as well.
Therefore, by Theorem \ref{T3.3.2}., there exists a function
$G:\mathbb{R}^{2}_{++}\to\mathbb{R}$ which is a solution of the cocycle
equation, and for which
\begin{equation}\label{Eq3.3.20}
\left|F\left(x, y\right)-G\left(x, y\right)\right|
\leq 2\varepsilon_{2}+4\varepsilon_{1}
\end{equation}
holds for all $x, y\in\mathbb{R}_{++}$.
Additionally, by a result of
\cite{Hos71} there exist a function $f:\mathbb{R}_{++}\to\mathbb{R}$ and a function
$B:\mathbb{R}^{2}_{++}\to\mathbb{R}$ which satisfies the following system
\[
\begin{array}{rcl}
B(x+y, z)&=&B(x, z)+B(y, z), \\
B(x, y)+B(y, x)&=&0,
\end{array}
\quad (x, y, z\in\mathbb{R}_{++})
\]
such that
\[
G\left(x, y\right)=B\left(x, y\right)+f\left(x+y\right)-f\left(x\right)-f\left(y\right).
\quad \left(x, y\in\mathbb{R}_{++}\right)
\]
All in all, this means that
\begin{equation}\label{Eq3.3.21}
\left|F(x, y)-\left(B(x, y)+f(x+y)-f(x)-f(y)\right)\right|\leq 2\varepsilon_{2}+4\varepsilon_{1}
\end{equation}
holds for all $x, y\in\mathbb{R}_{++}$.

Using the above properties of the function $B$, we
can show that $B$ is identically zero on $\mathbb{R}^{2}_{++}$.
Additionally, after some computation, we obtain that 
\[
F(x+y, z)+F(x, y)=F(x, y+z)+F(y, z). \quad \left(x, y, z\in\mathbb{R}_{++}\right)
\]
This means that also the function $F$ satisfies the cocycle equation on $\mathbb{R}^{2}_{++}$.
Additionally, $F$ is homogeneous of degree $\alpha$ ($\alpha\neq 0$) and symmetric.
Using Theorem 5. in \cite{JesKarTho68}, in case
$\alpha=1$, and a result of \cite{Mak82} in all other cases,
we get that
\begin{equation}\label{Eq3.3.23}
F(x, y)=\left\{
\begin{array}{lcl}
c\left[(x+y)^{\alpha}-x^{\alpha}-y^{\alpha}\right], & \hbox{if} & \alpha\notin \left\{0,1\right\} \\
\varphi\left(x+y\right)-\varphi(x)-\varphi(y), & \hbox{if}& \alpha=1
\end{array}
\right.
\end{equation}
where the function $\varphi:\mathbb{R}_{++}\to\mathbb{R}$ satisfies the functional equation
\[
\varphi\left(xy\right)=x\varphi(y)+y\varphi(x)
\]
for all $x, y\in\mathbb{R}_{++}$, and $c\in\mathbb{R}$ is a constant.
In view of the definition of the function $F$, this yields that
\begin{equation}\label{Eq3.3.24}
H(x, y, 0)= c\left[(x+y)^{\alpha}-x^{\alpha}-y^{\alpha}\right]
\end{equation}
for all $x, y\in\mathbb{R}_{++}$ in case $\alpha\notin\left\{0, 1\right\}$, and
\begin{equation}\label{Eq3.3.25}
H(x, y, 0)=\varphi\left(x+y\right)-\varphi(x)-\varphi(y)
\end{equation}
for all $x, y\in\mathbb{R}_{++}$ in case $\alpha=1$.

Using this representations and inequalities \eqref{Eq3.3.4} and \eqref{Eq3.3.5}, the 
statement of our theorem can be deduceed. 
\end{proof}

With the choice $\varepsilon_{1}=\varepsilon_{2}=\varepsilon_{3}=0$ one can
recognize the solutions
of equation \eqref{Eq3.3.1}.

\begin{cor}
Assume that the function
$H:C\to\mathbb{R}$ is symmetric, homogeneous of degree $\alpha$, where
$\alpha\in\mathbb{R}$ is arbitrary but fixed. Furthermore, suppose that
$H$ satisfies equation \eqref{Eq3.3.1} on the set $C^{\circ}$.
Then, in case $\alpha=1$ there exists a function $\varphi:\mathbb{R}_{++}\to\mathbb{R}$ which satisfies
the functional equation
\[
\varphi\left(xy\right)=x\varphi\left(y\right)+y\varphi\left(x\right),
\quad \left(x, y\in\mathbb{R}_{++}\right)
\]
and
\begin{equation}
H\left(x, y, z\right)=
\varphi\left(x+y+z\right)-\varphi\left(x\right)-\varphi\left(y\right)-\varphi\left(z\right)
\end{equation}
holds for all $\left(x, y, z\right)\in C^{\circ}$;
in all other cases there exists a constant $c\in\mathbb{R}$ such that
\begin{equation}
H\left(x, y, z\right)=c\left[\left(x+y+z\right)^{\alpha}-x^{\alpha}-y^{\alpha}-z^{\alpha}\right]
\end{equation}
holds on $C^{\circ}$.
\end{cor}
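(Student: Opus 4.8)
The plan is to read the Corollary as the exact ($\varepsilon_1=\varepsilon_2=\varepsilon_3=0$) specialization of Theorem \ref{T3.3.3}, so the first step is to match hypotheses. Symmetry of $H$ in all three arguments is precisely \eqref{Eq3.3.4} with $\varepsilon_1=0$; the assumption that $H$ satisfies the entropy equation \eqref{Eq3.3.1} on $C^{\circ}$ is \eqref{Eq3.3.5} with $\varepsilon_2=0$; and homogeneity of $H$ of degree $\alpha$, restricted to the slice $z=0$, yields $H(tx,ty,0)=t^{\alpha}H(x,y,0)$ for all $t,x,y\in\mathbb{R}_{++}$, which is \eqref{Eq3.3.6} with $\varepsilon_3=0$. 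Thus all three inequalities of Theorem \ref{T3.3.3} hold with vanishing right-hand sides, and I may invoke that theorem directly.

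Next I would push the zero error bounds through each case of the conclusion. In the case $\alpha=1$, estimate \eqref{Eq3.3.7} becomes $\left|H(x,y,z)-\left[\varphi(x+y+z)-\varphi(x)-\varphi(y)-\varphi(z)\right]\right|\leq 0$, which forces the exact identity $H(x,y,z)=\varphi(x+y+z)-\varphi(x)-\varphi(y)-\varphi(z)$ on $C^{\circ}$, with $\varphi\colon\mathbb{R}_{++}\to\mathbb{R}$ the function supplied by the theorem satisfying $\varphi(xy)=x\varphi(y)+y\varphi(x)$. Likewise, when $\alpha\notin\{0,1\}$, inequality \eqref{Eq3.3.9} reduces to $\left|H(x,y,z)-c\left[(x+y+z)^{\alpha}-x^{\alpha}-y^{\alpha}-z^{\alpha}\right]\right|\leq 0$, giving the claimed equality outright.

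The one place demanding a little bookkeeping—and the only genuine obstacle—is the case $\alpha=0$, where Theorem \ref{T3.3.3} delivers \eqref{Eq3.3.8} with bound $0$, i.e.\ $H\equiv a$ for a constant $a$, rather than a visibly power-type expression. To fold this into the unified formula of the Corollary I would observe that on $C^{\circ}$ all coordinates are positive, so $(x+y+z)^{0}-x^{0}-y^{0}-z^{0}=1-3=-2$; hence setting $c=-a/2$ gives $H(x,y,z)=c\left[(x+y+z)^{0}-x^{0}-y^{0}-z^{0}\right]$, exactly the stated form for the remaining cases. With the three cases assembled, the Corollary follows. I should note that any attempt at a self-contained argument would merely reproduce the substance of Theorem \ref{T3.3.3}—the cocycle stability result Theorem \ref{T3.3.2}, the Hossz\'u decomposition of \cite{Hos71}, and the classification of symmetric $\alpha$-homogeneous cocycles from \cite{JesKarTho68} and \cite{Mak82}—so citing that theorem is both the natural and the economical route.
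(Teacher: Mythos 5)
Your proposal is correct and is exactly the paper's own route: the corollary is stated there as the immediate specialization of Theorem \ref{T3.3.3} with $\varepsilon_{1}=\varepsilon_{2}=\varepsilon_{3}=0$, with the hypothesis matching (symmetry, the entropy equation on $C^{\circ}$, and $\alpha$-homogeneity restricted to the slice $z=0$) done just as you describe. Your extra bookkeeping for $\alpha=0$, noting that on $C^{\circ}$ one has $(x+y+z)^{0}-x^{0}-y^{0}-z^{0}=-2$ so a constant $a$ equals $c\left[(x+y+z)^{0}-x^{0}-y^{0}-z^{0}\right]$ with $c=-a/2$, is a detail the paper leaves implicit but is needed to fold that case into the stated power-type formula.
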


\begin{rem}
Our theorem says that the entropy equation is stable in the sense of Hyers and Ulam.
\end{rem}

\subsubsection{Stability of the modified entropy equation}

In this part of the paper we investigate the stability problem concerning
the functional equation
\begin{equation}\label{Eq3.3.33}
f(x, y, z)=f(x, y+z, 0)+(y+z)^{\alpha}f\left(0, \frac{y}{y+z}, \frac{z}{y+z}\right),
\end{equation}
where $x, y, z$ are positive real numbers and $\alpha$ is a given real number.
Equation \eqref{Eq3.3.33} is a special case of the
so-called \emph{modified entropy equation},
\begin{equation}\label{Eq3.3.34}
f(x, y, z)=f(x, y+z, 0)+\mu(y+z)f\left(0, \frac{y}{y+z}, \frac{z}{y+z}\right),
\end{equation}
where $\mu$ is a given multiplicative function defined on the positive cone of $\mathbb{R}^{k}$ and
\eqref{Eq3.3.34} is supposed to hold for all elements $x, y, z$ of the above mentioned cone
and all operations on vectors are to be understood componentwise.
The symmetric solutions of equation \eqref{Eq3.3.34} were determined in \cite{Gse08}
(see also \cite{AbbGseMakSun08}).

By a real interval we always mean a subinterval of positive length of $\mathbb{R}$.
Furthermore, in case $U$ and $V$ are real intervals, then their sum
\[
U+V=\left\{u+v \mid u\in U, v\in V\right\}
\]
is obviously a real interval, as well.

During the proof of our main result of this subsection the stability of a simple associativity
equation should be used which is contained in the following theorem, see \cite{Gse10a}.

\begin{thm}\label{T3.3.4}
Let $U, V, W$ be real intervals,
$A:(U+V)\times W\rightarrow\mathbb{R}$,
$B:U\times (V+W)\rightarrow\mathbb{R}$
and suppose that
\begin{equation}\label{Eq3.3.35}
\left|A(u+v, w)-B(u, v+w)\right|\leq \varepsilon
\end{equation}
holds for all $u\in U$, $v\in V$ and $w\in W$.
Then there exists a function $\varphi:U+V+W\rightarrow\mathbb{R}$ such that
\begin{equation}\label{Eq3.3.36}
\left|A(p, q)-\varphi(p+q)\right|\leq 2\varepsilon \quad \left(p\in (U+V), q\in W\right)
\end{equation}
and
\begin{equation}\label{Eq3.3.37}
\left|B(t, s)-\varphi(t+s)\right|\leq \varepsilon \quad \left(t\in U, s\in (V+W)\right)
\end{equation}
hold.
\end{thm}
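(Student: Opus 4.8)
The plan is to collapse both functions onto a single function $\varphi$ of the sum, using the hypothesis \eqref{Eq3.3.35} as the only available bridge between $A$ and $B$. Concretely, for a fixed total $r\in U+V+W$ I want to show that all the numbers $B(t,s)$ with $t\in U$, $s\in V+W$, $t+s=r$ lie in an interval of length at most $2\varepsilon$, and that every $A(p,q)$ with $p+q=r$ is within $\varepsilon$ of one of them. First I would rewrite the hypothesis as a transfer: for any admissible split $s=v+w$ of the second argument of $B$ we have $\left|B(t,s)-A(t+v,w)\right|\le\varepsilon$, since $A(t+v,w)=A((t+v),w)$ and $(t+v)+w=r$; conversely, writing $p=u+v$ gives $\left|A(p,q)-B(u,v+q)\right|\le\varepsilon$. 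Thus each value of one function on the level set $\{\,\cdot+\cdot=r\,\}$ is $\varepsilon$-close to a value of the other on the same level set.

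The decisive step is to connect two values of $B$ lying over the same total through a shared value of $A$. Given $t_1+s_1=t_2+s_2=r$, I would look for $w\in W$ with $s_1-w\in V$ and $s_2-w\in V$; for such a $w$ both $B(t_1,s_1)$ and $B(t_2,s_2)$ are within $\varepsilon$ of the single number $A(r-w,w)$, because $t_i+(s_i-w)=r-w$ for $i=1,2$. The triangle inequality would then yield
\[
\left|B(t_1,s_1)-B(t_2,s_2)\right|\le 2\varepsilon .
\]
Here is where the interval structure must be used in full: since $V$ has positive length and $s_1,s_2\in V+W$, the sets $W\cap(s_1-V)$ and $W\cap(s_2-V)$ are nonempty subintervals of $W$, and the required $w$ is any common point of them. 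I expect the existence of this common $w$ to be the main obstacle, and the place where one genuinely needs the hypotheses on $U,V,W$: sliding the split $s=v+w$ within $V$ allows one to move the shared $A$-argument $r-w$ only by an amount controlled by the length of $V$, so for far-apart decompositions one must reparametrize through a chain of intermediate totals and argue that the associated subintervals of $W$ overlap. Carrying this bridging argument out so that the bound remains $2\varepsilon$ (rather than accumulating) is the technical heart of the proof.

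Granting the oscillation estimate $\operatorname{osc}_{t+s=r}B\le 2\varepsilon$, I would define $\varphi(r)$ to be the midpoint of the range $\{B(t,s):t\in U,\,s\in V+W,\,t+s=r\}$. This choice immediately gives $\left|B(t,s)-\varphi(t+s)\right|\le\varepsilon$, which is \eqref{Eq3.3.37}. For \eqref{Eq3.3.36} I would take any $(p,q)$ with $p\in U+V$, $q\in W$, write $p=u+v$ with $u\in U$, $v\in V$, and chain the two transfers:
\[
\left|A(p,q)-\varphi(p+q)\right|
\le\left|A(u+v,q)-B(u,v+q)\right|+\left|B(u,v+q)-\varphi(r)\right|
\le\varepsilon+\varepsilon=2\varepsilon .
\]
The asymmetry of the two constants is thus structural rather than accidental: $\varphi$ is modelled directly on $B$ (one transfer, giving $\varepsilon$), whereas every value of $A$ is reached only after one additional crossing of the hypothesis (hence $2\varepsilon$). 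If the direct bridging in the second paragraph proves awkward, an alternative I would try is to exploit the amenability machinery already used in the paper, defining $\varphi(r)$ as an invariant-mean average of $B(t,r-t)$ over the admissible splits; this sidesteps the need to hit a single common $w$, though it still rests on the same level-set oscillation bound, which remains the crux.
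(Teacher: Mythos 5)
The paper itself offers no proof of Theorem \ref{T3.3.4} (it is quoted from \cite{Gse10a}), so your proposal has to stand on its own merits. Its scaffolding is fine: conclusion \eqref{Eq3.3.37} is indeed equivalent to the assertion that on every level set $\{(t,s)\in U\times(V+W)\,:\,t+s=r\}$ the oscillation of $B$ is at most $2\varepsilon$ (the midpoint of the range then serves as $\varphi(r)$), and \eqref{Eq3.3.36} then follows with constant $2\varepsilon$ by one further crossing of \eqref{Eq3.3.35}, exactly as you write. But the oscillation bound is precisely what you never prove. Your bridge needs a point $w\in W\cap(s_{1}-V)\cap(s_{2}-V)$; you correctly observe that this intersection is empty once $|s_{1}-s_{2}|$ exceeds the length of $V$, and you then defer to an unspecified chaining argument whose whole difficulty (keeping the bound at $2\varepsilon$ instead of letting it accumulate) you acknowledge but do not resolve. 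Since, as you note, the theorem is equivalent to that bound, what you have written is a reduction of the statement to itself; the invariant-mean fallback changes nothing, because by your own account it rests on the same bound.

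Moreover, the missing step cannot be supplied in the stated generality: each crossing of \eqref{Eq3.3.35} genuinely costs $\varepsilon$, and chaining genuinely accumulates. Concretely, let $\varepsilon>0$, $U=[0,10]$, $V=W=[0,1]$, and put $A(p,q)=2\varepsilon p$, $B(t,s)=2\varepsilon t+\varepsilon$. Then $A(u+v,w)-B(u,v+w)=2\varepsilon v-\varepsilon\in[-\varepsilon,\varepsilon]$, so \eqref{Eq3.3.35} holds everywhere; yet $B(4,2)=9\varepsilon$ and $B(6,0)=13\varepsilon$ lie on the same level set $t+s=6$, so no value $\varphi(6)$ can satisfy \eqref{Eq3.3.37} (one would need $\varphi(6)\in[8\varepsilon,10\varepsilon]\cap[12\varepsilon,14\varepsilon]=\emptyset$), and by lengthening $U$ and $W$ the level-set oscillation can be made arbitrarily large relative to $\varepsilon$. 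Thus the theorem as reproduced here (arbitrary intervals, absolute constants $2\varepsilon$ and $\varepsilon$) is false, and the obstacle you flagged is not a technicality but the exact point where it breaks. Your direct argument does go through whenever the common $w$ automatically exists, e.g.\ when the diameter of $U$ is at most that of $V$: then $s_{1}-V$, $s_{2}-V$ and $W$ pairwise intersect, and three pairwise intersecting intervals on the line have a common point. In particular it works for $U=V=W=\left]0,n\right]$ or $U=V=W=\mathbb{R}_{++}$, which are the domains on which Theorem \ref{T3.3.5} actually relies. So the productive conclusion is: either add such a hypothesis on the intervals (under which your proof is essentially complete, modulo writing out the Helly-type step), or accept constants that depend on the ratio of the lengths of $U$ and $V$.
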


With the choice $\varepsilon_{1}=\varepsilon_{2}=0$, we get the following theorem.
Nevertheless, it was proved in Maksa \cite{Mak00}.
\begin{cor}
Let $U, V$ and $W$ be real intervals,
$A:(U+V)\times W\rightarrow\mathbb{R}$,
$B:U\times (V+W)\rightarrow\mathbb{R}$ and suppose that
\[
A(u+v, w)=B(u, v+w)
\]
holds for all $u\in U, v\in V$ and $w\in W$.
Then there exists a function $\varphi:U+V+W\rightarrow\mathbb{R}$
such that
\begin{equation}
A(p, q)=\varphi(p+q)
\end{equation}
for all $p\in U+V$ and $q\in W$ and
\begin{equation}
B(t, s)=\varphi(t+s)
\end{equation}
for all $t\in U$ and $s\in V+W$.
\end{cor}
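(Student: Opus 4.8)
The plan is to obtain the corollary as the degenerate case $\varepsilon=0$ of Theorem~\ref{T3.3.4}, which has already been established. First I would observe that, under the hypothesis $A(u+v,w)=B(u,v+w)$ holding for all $u\in U$, $v\in V$, $w\in W$, the stability inequality \eqref{Eq3.3.35} is satisfied with $\varepsilon=0$. Hence Theorem~\ref{T3.3.4} applies verbatim and furnishes a function $\varphi\colon U+V+W\to\mathbb{R}$ for which the estimates \eqref{Eq3.3.36} and \eqref{Eq3.3.37} hold, again with $\varepsilon=0$.

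The conclusion is then immediate. Inequality \eqref{Eq3.3.36} becomes $\left|A(p,q)-\varphi(p+q)\right|\leq 0$ for all $p\in U+V$ and $q\in W$, which forces $A(p,q)=\varphi(p+q)$; likewise \eqref{Eq3.3.37} becomes $\left|B(t,s)-\varphi(t+s)\right|\leq 0$ for all $t\in U$ and $s\in V+W$, which forces $B(t,s)=\varphi(t+s)$. This is exactly the assertion of the corollary, so no further work is required once the stability theorem is in hand; the only point worth stating explicitly is that a nonnegative quantity bounded by $2\varepsilon=0$ (respectively $\varepsilon=0$) must vanish.

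Should a self-contained argument be preferred, I would instead construct $\varphi$ directly by setting $\varphi(r)=A(p,q)$ for an arbitrary decomposition $r=p+q$ with $p\in U+V$ and $q\in W$. Writing $p=u+v$ and invoking the hypothesis, $A(u+v,q)=B(u,v+q)$, one sees that the candidate value depends only on $u$ and on $s:=v+q$, subject to $u+s=r$. The key step, and the main obstacle, is then to show that $B(t,s)$ depends on $t$ and $s$ only through the sum $t+s$; this well-definedness is precisely what renders $\varphi$ unambiguous. I expect to prove it by a chaining argument: for two decompositions of the same $r$, one connects them through the set $\{s\in (V+W)\colon r-s\in U\}$, which equals $(V+W)\cap(r-U)$ and is therefore nonempty and connected as an intersection of real intervals, and one applies the hypothesis along sufficiently small increments, on each of which a common $w\in W$ can be chosen so that the equation transfers the value. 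This connectedness-and-overlap mechanism is exactly the feature that the interval assumption supplies, and it is already carried out inside the proof of Theorem~\ref{T3.3.4} for the stable case, which is why quoting that theorem is the most economical route here.
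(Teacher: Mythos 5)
Your proof is correct and is exactly the paper's own route: the corollary is obtained by specializing Theorem \ref{T3.3.4} to $\varepsilon=0$, whereupon the bounds $2\varepsilon=0$ and $\varepsilon=0$ force the inequalities \eqref{Eq3.3.36} and \eqref{Eq3.3.37} to become the stated equalities. The alternative self-contained construction you sketch is unnecessary here, since the paper likewise treats the corollary purely as the degenerate case of the stability theorem.
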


In view of the results of the previous sections (that is Theorems \ref{T3.1.1} and \ref{T3.2.1}) 
and with the help of Theorem \ref{T3.3.4}, the following result can be proved. 
For the details of the proof see \cite{Gse10a}.

\begin{thm}\label{T3.3.5}
Let $\alpha, \varepsilon\in\mathbb{R}$, $\alpha\neq 1, \varepsilon\geq 0$
and $f:\mathbb{R}^{3}_{+}\rightarrow\mathbb{R}$ be a function.
Assume that
\begin{equation}\label{Eq3.3.40}
\left|f(x, y, z)-f(x, y+z, 0)-(y+z)^{\alpha}f\left(0, \frac{y}{y+z}, \frac{z}{y+z}\right)\right|\leq \varepsilon_{1}
\end{equation}
and
\begin{equation}\label{Eq3.3.41}
\left|f(x, y, z)-f\left(\sigma(x), \sigma(y), \sigma(z)\right)\right|\leq \varepsilon_{2}
\end{equation}
hold for all $x, y, z\in\mathbb{R}_{++}$ and for all permutations
$\sigma:\left\{x, y, z\right\}\rightarrow\left\{x, y, z\right\}$.

Then, in case $\alpha<0$, there exist $a\in\mathbb{R}$ and a function
$\varphi_{1}:\mathbb{R}_{++}\rightarrow\mathbb{R}$ such that
\begin{equation}\label{Eq3.3.42}
\left|f(x, y, z)-\left[ax^{\alpha}+ay^{\alpha}+az^{\alpha}+\varphi_{1}(x+y+z)\right]\right|\leq
2\varepsilon_{1}+3\varepsilon_{2}
\end{equation}
holds for all $x, y, z\in\mathbb{R}_{++}$.

Furthermore, if $\alpha=0$, then there exists a function $\varphi_{2}:\mathbb{R}_{++}\rightarrow\mathbb{R}$
such that
\begin{equation}\label{Eq3.3.43}
\left|f(x, y, z)-\varphi_{2}(x+y+z)\right|\leq 191\varepsilon_{1}+1263\varepsilon_{2}
\end{equation}
holds for all $x, y, z\in\mathbb{R}_{++}$.

Finally, if $1\neq \alpha>0$, then for all $n\in\mathbb{N}$, there exists a function
$\psi_{n}:]0, 3n]\rightarrow\mathbb{R}$ such that
\[
\left|f(x, y, z)-\left[ax^{\alpha}+ay^{\alpha}+az^{\alpha}+\psi_{n}(x+y+z)\right]\right|
\leq c_{n}(\alpha)\varepsilon_{n}+d_{n}(\alpha)\varepsilon_{2}
\]
holds for all $x, y, z\in ]0, n]$, where
\[
c_{n}(\alpha)=2+7\cdot 2^{\alpha}n^{\alpha}K(\alpha)
\quad
\text{and}
\quad
d_{n}(\alpha)=4+7\cdot 2^{\alpha+2}n^{\alpha}K(\alpha).
\]
\end{thm}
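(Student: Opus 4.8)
The plan is to reduce the three–variable problem to the parametric fundamental equation of information treated in Theorems \ref{T3.1.1} and \ref{T3.2.1}, and then to recover the dependence on the total mass $x+y+z$ by means of the associativity stability result Theorem \ref{T3.3.4}. First I would introduce the one–variable trace $h\colon\,]0,1[\,\to\mathbb{R}$, $h(t)=f(0,1-t,t)$, and show that it approximately solves the parametric fundamental equation of information. Writing a normalized triple as $(x,y,z)$ with $x+y+z=1$ and $y=1-x-z>0$, the term $(y+z)^{\alpha}f(0,\tfrac{y}{y+z},\tfrac{z}{y+z})$ is exactly $(1-x)^{\alpha}h(\tfrac{z}{1-x})$, while $f(x,y+z,0)$ becomes $h(x)=f(0,y+z,x)$ after one use of the approximate symmetry \eqref{Eq3.3.41}. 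Doing this for $(x,y,z)$, for the swapped triple $(z,y,x)$, and comparing the two via \eqref{Eq3.3.41} yields
\[
\left|h(x)+(1-x)^{\alpha}h\!\left(\tfrac{z}{1-x}\right)-h(z)-(1-z)^{\alpha}h\!\left(\tfrac{x}{1-z}\right)\right|\le 2\varepsilon_{1}+3\varepsilon_{2}
\]
for all $(x,z)\in D^{\circ}$. Thus $h$ satisfies \eqref{Eq3.1.1}/\eqref{Eq3.2.1} with $\varepsilon=2\varepsilon_{1}+3\varepsilon_{2}$.

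Next I would feed this into the already established results. For $\alpha<0$ Theorem \ref{T3.2.1} is hyperstable, so $h$ is an \emph{exact} solution, $h(t)=ct^{\alpha}+d(1-t)^{\alpha}-d$; the approximate symmetry $h(t)\approx h(1-t)$ then forces $c=d=:a$, since $|c-d|\,|t^{\alpha}-(1-t)^{\alpha}|$ is bounded while $t^{\alpha}-(1-t)^{\alpha}\to+\infty$ as $t\to0+$. For $0\le\alpha\neq1$ Theorem \ref{T3.1.1} places $h$ within $K(\alpha)(2\varepsilon_{1}+3\varepsilon_{2})$ of the model solution ($a x^{\alpha}+b(1-x)^{\alpha}-b$ if $\alpha>0$, and $l(1-x)+c$ with $l$ logarithmic if $\alpha=0$). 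Substituting the form of $h$ back into \eqref{Eq3.3.40} replaces the scaled term by the explicit expression $a z^{\alpha}+b y^{\alpha}-b(y+z)^{\alpha}$, so that, setting $R(x,s):=f(x,s,0)-a x^{\alpha}-a s^{\alpha}$, the remainder $f(x,y,z)-a(x^{\alpha}+y^{\alpha}+z^{\alpha})$ is, up to the stated error, equal to $R(x,y+z)$, a function of $x$ and $y+z$ alone (in the case $\alpha<0$, where $a=b$).

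I would then use symmetry once more to convert this into an associativity inequality. Applying the previous step to $(x,y,z)$ and to $(z,y,x)$ and cancelling the symmetric power terms gives $|R(z,x+y)-R(x,y+z)|\le 2\varepsilon_{1}+\varepsilon_{2}$, which is \eqref{Eq3.3.35} with $A(u+v,w)=R(w,u+v)$, $B(u,v+w)=R(u,v+w)$ and $U=V=W=\mathbb{R}_{++}$. Theorem \ref{T3.3.4} produces $\varphi$ with $|R(x,s)-\varphi(x+s)|\le 2\varepsilon_{1}+\varepsilon_{2}$, and back–substitution collapses the $x$– and $(y+z)$–dependence into $x+y+z$, giving the representation $a(x^{\alpha}+y^{\alpha}+z^{\alpha})+\varphi_{1}(x+y+z)$ for $\alpha<0$; the additive constants are then tracked to reach the bound $2\varepsilon_{1}+3\varepsilon_{2}$. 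In the case $\alpha=0$ one extra argument eliminates the logarithmic summand: extending $l$ to $\mathbb{R}_{++}$ and combining $h(t)\approx h(1-t)$ with the logarithmic identity gives $|l(p)-l(q)|\le\mathrm{const}$ for all $p,q\in\mathbb{R}_{++}$, so $l$ is a bounded logarithmic function and hence identically $0$; this is why only $\varphi_{2}(x+y+z)$ survives, at the price of the large accumulated constants $191$ and $1263$.

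The main obstacle will be the case $1\neq\alpha>0$, and it is precisely the reason the conclusion there is only \emph{local}. Because Theorem \ref{T3.1.1} yields genuine stability rather than hyperstability, the error in $h$ is a fixed multiple of $K(\alpha)\varepsilon$, and when it is carried through the scaling factor $(y+z)^{\alpha}$ it is magnified to $(y+z)^{\alpha}K(\alpha)\varepsilon$, which is unbounded as $y+z\to\infty$. Restricting to $x,y,z\in\,]0,n]$ bounds this factor by $2^{\alpha}n^{\alpha}$ and produces exactly the $n^{\alpha}$–dependent constants $c_{n}(\alpha)$, $d_{n}(\alpha)$ together with the domain $]0,3n]$ of $\psi_{n}$. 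A second delicate point here is that approximate symmetry no longer forces $c=d$, because $t^{\alpha}-(1-t)^{\alpha}$ stays bounded for $\alpha>0$; the two coefficients coincide only up to an $O(\varepsilon)$ error, which must be absorbed into the error term (after magnification by $n^{\alpha}$), and this is the source of the somewhat unusual shape of $c_{n}$ and $d_{n}$. Apart from these two points, the remaining work is careful but routine propagation of the additive constants through the three reduction steps.
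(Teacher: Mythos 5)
Your overall strategy is exactly the one the paper indicates (the paper only sketches this proof, deferring details to \cite{Gse10a}): pass to the simplex trace $h(t)=f(0,1-t,t)$, verify the fundamental-equation inequality for it, apply Theorem \ref{T3.2.1} (hyperstability, $\alpha<0$) resp.\ Theorem \ref{T3.1.1} ($0\leq\alpha\neq 1$), and recover the dependence on $x+y+z$ via Theorem \ref{T3.3.4}. The first genuine problem is that you repeatedly apply the approximate symmetry \eqref{Eq3.3.41} to triples containing a zero coordinate --- to replace $f(x,y+z,0)$ by $f(0,y+z,x)$, and to get $h(t)\approx h(1-t)$ --- whereas \eqref{Eq3.3.41} is assumed only for $x,y,z\in\mathbb{R}_{++}$. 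The second of these uses is repairable: comparing \eqref{Eq3.3.40} at $(x,y,z)$ and at $(x,z,y)$ and using interior symmetry gives $(y+z)^{\alpha}\bigl|h\bigl(\tfrac{z}{y+z}\bigr)-h\bigl(\tfrac{y}{y+z}\bigr)\bigr|\leq 2\varepsilon_{1}+\varepsilon_{2}$ with $y+z>0$ arbitrary, and letting $y+z\to 0+$ (if $\alpha<0$) or $y+z\to\infty$ (if $\alpha>0$) forces $h(t)=h(1-t)$ exactly. The first use is not repairable by such a trick: it is precisely what reduces the problem to a \emph{single} unknown function, and with interior symmetry alone you are left with a Pexider-type fundamental inequality in the two traces $G(x)=f(x,1-x,0)$ and $h$, which Theorems \ref{T3.1.1} and \ref{T3.2.1} do not cover. (In \cite{Gse10a} the symmetry hypothesis covers the boundary of the cone, which legitimizes this step; against the statement as worded here, it is a gap that at minimum must be flagged.)

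The second problem is the constants. Following your own chain in the case $\alpha<0$: one $\varepsilon_{1}$ is spent to replace $f(x,y,z)-a(x^{\alpha}+y^{\alpha}+z^{\alpha})$ by $R(x,y+z)$, and then the $B$-estimate \eqref{Eq3.3.37} of Theorem \ref{T3.3.4}, applied with input $2\varepsilon_{1}+\varepsilon_{2}$, gives $|R(x,s)-\varphi(x+s)|\leq 2\varepsilon_{1}+\varepsilon_{2}$; the triangle inequality then yields the final bound $3\varepsilon_{1}+\varepsilon_{2}$. Since $3\varepsilon_{1}+\varepsilon_{2}$ neither equals nor implies $2\varepsilon_{1}+3\varepsilon_{2}$ (take $\varepsilon_{2}=0<\varepsilon_{1}$), no amount of ``tracking the additive constants'' along the route you describe can land on the stated bound \eqref{Eq3.3.42}; the error budget would have to be distributed differently (fewer applications of \eqref{Eq3.3.40}, more of \eqref{Eq3.3.41}) to reproduce the theorem's coefficients. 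The same caveat applies to the asserted-but-underived constants $191$ and $1263$ in the case $\alpha=0$ and to $c_{n}(\alpha)$, $d_{n}(\alpha)$ in the case $1\neq\alpha>0$; your qualitative explanation of why that last case is only local (the factor $(y+z)^{\alpha}\leq 2^{\alpha}n^{\alpha}$ magnifying $K(\alpha)\varepsilon$, and the failure of $t^{\alpha}-(1-t)^{\alpha}$ to be unbounded) is correct and matches the intended mechanism, but it does not by itself certify those specific constants.
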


With the choice $\varepsilon_{1}=\varepsilon_{2}=0$ we get the general
solutions of equation \eqref{Eq3.3.33}, in the investigated cases.

\begin{cor}\label{C3.3.3}
Let $\alpha\in\mathbb{R}$, $\alpha\neq 1$ and
suppose that the function $f:\mathbb{R}^{3}_{+}\rightarrow\mathbb{R}$
is symmetric and satisfies
functional equation \eqref{Eq3.3.33} for all $x, y, z\in\mathbb{R}_{++}$.

Then, in case $\alpha\neq 0$, there exist $a\in\mathbb{R}$ and a function
$\varphi_{1}:\mathbb{R}_{++}\rightarrow\mathbb{R}$ such that
\[
f(x, y, z)=ax^{\alpha}+ay^{\alpha}+az^{\alpha}+\varphi_{1}(x+y+z)
\]
holds for all $x, y, z\in\mathbb{R}_{++}$.

In case $\alpha=0$, there exists a function $\varphi_{2}:\mathbb{R}_{++}\rightarrow\mathbb{R}$
such that
\[
f(x, y, z)=\varphi_{2}(x+y+z)
\]
is fulfilled for all $x, y, z\in\mathbb{R}_{++}$.
\end{cor}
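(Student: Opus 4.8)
The plan is to derive this corollary as the vanishing-error specialization of Theorem \ref{T3.3.5}. First I would check that the two standing hypotheses of the corollary match the hypotheses of that theorem with $\varepsilon_1=\varepsilon_2=0$: the assumption that $f$ satisfies \eqref{Eq3.3.33} identically is exactly \eqref{Eq3.3.40} with $\varepsilon_1=0$, and the symmetry of $f$ is exactly \eqref{Eq3.3.41} with $\varepsilon_2=0$. Hence Theorem \ref{T3.3.5} applies, and in each of its cases the error bound is a nonnegative linear combination of $\varepsilon_1$ and $\varepsilon_2$, so it collapses to $0$.

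In the cases $\alpha<0$ and $\alpha=0$ this finishes the matter outright. For $\alpha<0$ the bound $2\varepsilon_1+3\varepsilon_2$ is zero, so \eqref{Eq3.3.42} becomes the exact identity $f(x,y,z)=ax^{\alpha}+ay^{\alpha}+az^{\alpha}+\varphi_1(x+y+z)$ for all $x,y,z\in\mathbb{R}_{++}$, with the globally defined $\varphi_1$ furnished directly by the theorem. For $\alpha=0$ the bound $191\varepsilon_1+1263\varepsilon_2$ is zero, so \eqref{Eq3.3.43} gives $f(x,y,z)=\varphi_2(x+y+z)$ on all of $\mathbb{R}_{++}$. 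These are precisely the two displayed conclusions of the corollary in those cases.

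The remaining case $0<\alpha\neq 1$ is the only one needing extra work, because there Theorem \ref{T3.3.5} supplies, for each $n$, only a local function $\psi_n$ on $]0,3n]$ together with the identity $f(x,y,z)=ax^{\alpha}+ay^{\alpha}+az^{\alpha}+\psi_n(x+y+z)$ valid on the cube $]0,n]^3$ (the error term being a combination of $\varepsilon_1$ and $\varepsilon_2$, which vanishes). I must glue these local pieces into a single $\varphi_1$ on $\mathbb{R}_{++}$. To do so I would set $g(x,y,z)=f(x,y,z)-a(x^{\alpha}+y^{\alpha}+z^{\alpha})$ and note that the local identities say $g$ depends only on the sum $x+y+z$ on each $]0,n]^3$. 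Given two triples with the same sum $s$, I choose $n$ larger than all six of their coordinates so that both lie in $]0,n]^3$; then both values of $g$ equal $\psi_n(s)$ and so coincide. Since $\bigcup_n \left]0,n\right]^3=\mathbb{R}_{++}^3$, this shows $g$ is a function of $x+y+z$ alone on the whole positive octant, and defining $\varphi_1(s)$ to be that common value yields $f(x,y,z)=a(x^{\alpha}+y^{\alpha}+z^{\alpha})+\varphi_1(x+y+z)$ everywhere, as required under $\alpha\neq 0$.

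The one delicate point in this patching, and the main (though still routine) obstacle, is whether the constant $a$ produced by the theorem might secretly depend on $n$. I would rule this out by comparing two local representations on an overlap $]0,\min(m,n)]^3$, where they force $(a_n-a_m)(x^{\alpha}+y^{\alpha}+z^{\alpha})=\psi_m(x+y+z)-\psi_n(x+y+z)$. Fixing the sum $s$ and varying the triple over $x+y+z=s$, the right-hand side is constant while the left-hand side is nonconstant unless $a_n=a_m$, because for $\alpha\neq 1$ the quantity $x^{\alpha}+y^{\alpha}+z^{\alpha}$ is not determined by $x+y+z$ alone. Hence a single $a$ serves for all $n$, the gluing above is legitimate, and the corollary follows.
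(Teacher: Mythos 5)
Your proposal is correct, and at its core it follows the same route as the paper, whose entire proof of the corollary is the one-line remark that the choice $\varepsilon_{1}=\varepsilon_{2}=0$ in Theorem \ref{T3.3.5} yields the general solutions. However, you go genuinely beyond the paper in the case $0<\alpha\neq 1$, and rightly so: there, even with vanishing $\varepsilon$'s, Theorem \ref{T3.3.5} supplies only, for each $n\in\mathbb{N}$, a function $\psi_{n}:\left]0,3n\right]\to\mathbb{R}$ and the exact identity $f(x,y,z)=ax^{\alpha}+ay^{\alpha}+az^{\alpha}+\psi_{n}(x+y+z)$ on the cube $\left]0,n\right]^{3}$, so the corollary's global conclusion for all $\alpha\neq 0$ does not follow by bare substitution. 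Your local-to-global patching is exactly the missing supplement, and it is sound: the key step, ruling out dependence of $a$ on $n$, works because for $\alpha\notin\{0,1\}$ the quantity $x^{\alpha}+y^{\alpha}+z^{\alpha}$ is nonconstant on each slice $x+y+z=s$ (compare, say, $(s/3,s/3,s/3)$ with $(s/2,s/4,s/4)$, using strict convexity or concavity of $t\mapsto t^{\alpha}$, and choose $s$ small enough that both triples lie in $\left]0,\min(m,n)\right]^{3}$), after which any two triples with equal sum lie in a common cube and the values of $f(x,y,z)-a(x^{\alpha}+y^{\alpha}+z^{\alpha})$ agree, so a single $\varphi_{1}$ on $\mathbb{R}_{++}$ is well defined. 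The paper leaves this gluing implicit (its proof of Theorem \ref{T3.3.5} is itself only sketched, with details deferred to \cite{Gse10a}); your version makes the corollary self-contained relative to the theorem exactly as stated, which is a genuine improvement in rigor, while the paper's approach buys brevity by treating the corollary as an immediate specialization.
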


In view of Corollary \ref{C3.3.3}., our theorem says that the modified entropy equation
is stable in the sense of Hyers and Ulam on its one-dimensional domain with the
multiplicative function $\mu(x)=x^{\alpha}$ ($\alpha\leq 0, x\in\mathbb{R}_{++}$).

In case $1\neq \alpha>0$ we obtain however that functional equation
\eqref{Eq3.3.33} is stable on every cartesian product of bounded real
intervals of the form $]0, n]^{3}$ , where $n\in\mathbb{N}$.
Nevertheless, an easy computation shows that
\[
\lim_{n\rightarrow +\infty}c_{n}(\alpha)=+\infty
\quad
\lim_{n\rightarrow +\infty}d_{n}(\alpha)=+\infty.
\quad
\left(1\neq \alpha>0\right)
\]

To the best of our knowledge, this is a new phenomenon in the
stability theory of functional equations.
Since we cannot prove the 'standard' Hyers--Ulam stability in this case,
the following problem can be raised.

\begin{Opp}
Let $\alpha, \varepsilon_{1}, \varepsilon_{2}\in\mathbb{R}$,
$\alpha>0, \varepsilon_{1}, \varepsilon_{2}\geq 0$, and
$f:\mathbb{R}^{3}_{+}\rightarrow\mathbb{R}$ be a function.
Assume that
\[
\left|f(x, y, z)-f(x, y+z, 0)-(y+z)^{\alpha}f\left(0, \frac{y}{y+z}, \frac{z}{y+z}\right)\right|\leq \varepsilon_{1}
\]
and
\[
\left|f(x, y, z)-f\left(\sigma(x), \sigma(y), \sigma(z)\right)\right|\leq \varepsilon_{2}
\]
holds for all $x, y, z\in\mathbb{R}_{++}$ and for all
$\sigma:\left\{x, y, z\right\}\rightarrow\left\{x, y, z\right\}$ permutations.

Is is true that there exists a solution of equation
\eqref{Eq3.3.33} $h:\mathbb{R}^{3}_{++}\rightarrow\mathbb{R}$ such that
\[
\left|f(x, y, z)-h(x, y, z)\right|\leq K_{1}\varepsilon_{1}+K_{2}\varepsilon_{2}
\]
holds for all $x, y, z\in\mathbb{R}_{++}$ with some $K_{1}, K_{2}\in\mathbb{R}$?
\end{Opp}

The second open problem that can be raised is the stability problem of
the modified entropy equation itself, i.e.,
equation \eqref{Eq3.3.34}.

\begin{Opp}
Let $\varepsilon_{1}, \varepsilon_{2}\geq 0$, $\mu:\mathbb{R}^{k}_{++}\rightarrow\mathbb{R}$
be a given multiplicative function, $f:\mathbb{R}^{3k}_{+}\rightarrow\mathbb{R}$.
Assume that
\[
\left|f(x, y, z)-f(x, y+z, 0)-\mu(y+z)f\left(0, \frac{y}{y+z}, \frac{z}{y+z}\right)\right|\leq \varepsilon_{1}
\]
and
\[
\left|f(x, y, z)-f\left(\sigma(x), \sigma(y), \sigma(z)\right)\right|\leq \varepsilon_{2}
\]
holds for all $x, y, z\in\mathbb{R}^{k}_{++}$ and for all
$\sigma:\left\{x, y, z\right\}\rightarrow\left\{x, y, z\right\}$ permutation.

Is is true that there exists a solution of equation
\eqref{Eq3.3.34} $h:\mathbb{R}^{3k}_{++}\rightarrow\mathbb{R}$ such that
\[
\left|f(x, y, z)-h(x, y, z)\right|\leq K_{1}\varepsilon_{1}+K_{2}\varepsilon_{2}
\]
holds for all $x, y, z\in\mathbb{R}^{k}_{++}$ with certain $K_{1}, K_{2}\in\mathbb{R}$?
\end{Opp}

\subsection{Stability of sum form equations}

We have to begin with an open problem since there is no stability result on equation \eqref{Eq1.2.2}
\begin{equation*}
\sum_{i=1}^{n}\sum_{j=1}^{m}f(p_{i}q_{j})=\sum_{i=1}^{n}f(p_{i})+\sum_{j=1}^{m}f(q_{j})+(2^{1-\alpha}-1)\sum_{i=1}^{n}f(p_{i})\sum_{j=1}^{m}f(q_{j})
\end{equation*}
in case $\alpha=1$.

\begin{Opp}
Suppose that $n\geq 2, m\geq 2, 0\leq\varepsilon\in\mathbb{R}, f:I\to\mathbb{R}$ and the stability inequality
\[
\left|\sum_{i=1}^{n}\sum_{j=1}^{m}f(p_{i}q_{j})-\sum_{i=1}^{n}f(p_{i})-\sum_{j=1}^{m}f(q_{j})  \right|\leq\varepsilon
\]
holds for all $(p_{1}, \ldots, p_{n})\in\mathcal{G}_{n}, (q_{1}, \ldots, q_{n})\in\mathcal{G}_{m}$. Prove or disprove that
$f$ is the sum of a solution of \eqref{Eq1.2.2} with $\alpha=1$ and a bounded function.
\end{Opp}

A somewhat related result however is proved in Kocsis--Maksa \cite{KocMak98} which reads as follows.
\begin{thmszn}
Let $n\geq 3, m\geq 3,\,\, 0\leq\varepsilon\in\mathbb{R},\,\, f:[0,1]\to\mathbb{R},\,\, \alpha,\, \beta\in\mathbb{R}$ and suppose that
\[
\left|\sum_{i=1}^{n}\sum_{j=1}^{m}f(p_{i}q_{j})-\sum_{i=1}^{n}f(p_{i})\sum_{j=1}^{m}q_{j}^{\beta}-\sum_{j=1}^{m}f(q_{j})\sum_{i=1}^{n}p_{i}^{\alpha} \right|\leq\varepsilon
\]
holds for all $(p_{1}, \ldots, p_{n})\in\Gamma_{n},\, (q_{1}, \ldots, q_{n})\in\Gamma_{m}$.

Then there exist an additive function $a:\mathbb{R}\to\mathbb{R}$, a function $\ell:\mathbb{R}_{+}\to\mathbb{R},\, \ell(0)=0, \,\ell$ is logarithmic on $\mathbb{R}_{++}$,
a bounded function $b:[0,1]\to\mathbb{R}$, and a real number $c$ such that $a(1)=0$,
\[
f(p)=a(p)+c(p^{\alpha}-p^{\beta})+b(p) \quad \text{if} \quad p\in [0,1], \,\, \beta\neq \alpha
\]
and
\[
f(p)=a(p)+p^{\alpha}\ell(p)+b(p) \quad \text{if} \quad p\in [0,1], \,\, \beta=\alpha\neq 1.
\]
\end{thmszn}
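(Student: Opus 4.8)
The plan is to collapse the $n$- and $m$-place inequality to one in two real variables, to read off from it the symmetric combination $f(x)+f(1-x)$ (and with it the constant $c$, resp.\ the logarithmic function $\ell$), and finally to recover the additive summand $a$ by a Hyers-type argument, everything else being swept into the bounded function $b$. First I would restrict to distributions supported on at most two nonzero coordinates. Writing $\bar t=1-t$ and taking $P=(x,\bar x,0,\dots,0)\in\Gamma_n$, $Q=(y,\bar y,0,\dots,0)\in\Gamma_m$ with $x,y\in\,]0,1[$, the convention $0^\gamma=0$ makes the power sums collapse to $x^\alpha+\bar x^\alpha$ and $y^\beta+\bar y^\beta$, while the double sum contributes the four products $f(xy),f(x\bar y),f(\bar xy),f(\bar x\bar y)$ together with explicit multiples of $f(0)$. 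The one-point substitutions $P=(1,0,\dots,0)$ and $Q=(1,0,\dots,0)$ produce bounds tying $f(1)$ to $f(0)$ and $\varepsilon$; since $f(0)$ and $f(1)$ only need to be matched by $b$ at the two points $0$ and $1$, they may be carried along as constants. Writing $F(t)=f(t)+f(\bar t)$, $u(t)=t^\alpha+\bar t^\alpha$, $v(t)=t^\beta+\bar t^\beta$, the hypothesis then gives, on every compact subrectangle of $\,]0,1[^{2}$,
\[
\bigl|f(xy)+f(x\bar y)+f(\bar xy)+f(\bar x\bar y)-F(x)\,v(y)-F(y)\,u(x)\bigr|\le C_1,
\]
with $C_1$ a constant (a multiple of $\varepsilon$ together with a controlled multiple of $|f(0)|$).

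Suppose first $\beta\neq\alpha$. Putting $y=\tfrac12$ and, separately, $x=\tfrac12$ in the displayed inequality yields two one-variable estimates whose left-hand sides are the common term $2[f(x/2)+f(\bar x/2)]$; subtracting them removes this term and leaves $\bigl|(2^{1-\alpha}-2^{1-\beta})F(x)+F(\tfrac12)(v(x)-u(x))\bigr|\le C_2$. As $2^{1-\alpha}-2^{1-\beta}\neq0$ here, dividing shows that $F(x)=f(x)+f(1-x)$ agrees, up to a bounded error, with $c\bigl(x^\alpha+\bar x^\alpha-x^\beta-\bar x^\beta\bigr)$ for the explicit constant $c=F(\tfrac12)/(2^{1-\alpha}-2^{1-\beta})$ of the theorem. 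I would then replace $f$ by $h=f-c(x^\alpha-x^\beta)$; since $x\mapsto c(x^\alpha-x^\beta)$ is an exact solution and the sum-form expression is linear in $f$, the function $h$ satisfies the same stability inequality, and in addition $h(x)+h(1-x)$ is bounded.

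To recover the additive summand I would use the standard sum-form substitution (as in Maksa \cite{Mak89}): comparing the three-atom distribution $(s,t,1-s-t,0,\dots,0)$ with its two-atom merge $(s+t,1-s-t,0,\dots,0)$ against a fixed $Q$ and subtracting the two instances of the hypothesis for $h$, the merged atoms cancel and the Cauchy difference $h(s+t)-h(s)-h(t)$ appears, accompanied by power sums $v(q)$, $u$-type terms and the scaled Cauchy differences $h(sq)+h(tq)-h((s+t)q)$, all of which are bounded on compact subtriangles. Combining this with the boundedness of $h(x)+h(1-x)$ should yield boundedness of the Cauchy difference of $h$ on the triangle $\{s,t>0,\ s+t<1\}$; Hyers' theorem \cite{Hye41} and the Dar\'oczy--Losonczi extension theorem \cite{DarLos77} then furnish an additive $a:\mathbb R\to\mathbb R$ with $a(1)=0$ for which $h-a$ is bounded, which is exactly $f(p)=a(p)+c(p^\alpha-p^\beta)+b(p)$. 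In the degenerate case $\beta=\alpha\neq1$ the subtraction of the two specializations collapses, so I would instead exploit that the power sum is \emph{exactly} multiplicative, $\sum_{i,j}(p_iq_j)^\alpha=\bigl(\sum_i p_i^\alpha\bigr)\bigl(\sum_j q_j^\alpha\bigr)$. Writing $A(P)=\sum_i f(p_i)$ and $M(P)=\sum_i p_i^\alpha$, the hypothesis reads $|A(P\ast Q)-A(P)M(Q)-A(Q)M(P)|\le\varepsilon$; dividing by $M(P\ast Q)=M(P)M(Q)$ (which stays bounded away from $0$ on compact ranges of the two-point distributions) shows that $A/M$ is approximately additive under the convolution $\ast$, i.e.\ $f/(\cdot)^\alpha$ is approximately logarithmic. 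The logarithmic-stability step used in Case~I of the proof of Theorem \ref{T3.1.1} then yields a genuine logarithmic $\ell$, uniquely extendable to $\mathbb R_{++}$ with $\ell(0)=0$, after which the same Cauchy-difference argument separates off an additive $a$ with $a(1)=0$, giving $f(p)=a(p)+p^\alpha\ell(p)+b(p)$.

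The main obstacle is twofold. First, the additive summand $a$ is invisible to the symmetrized relation for $F$ and lives in the kernel of the reduced equation; recovering it honestly requires producing a genuinely bounded Cauchy difference on a full triangle and only then invoking Hyers' theorem and the extension theorem, and the companion scaled Cauchy differences appearing in the merge substitution make this boundedness the technical heart of the argument. Second, for negative $\alpha$ or $\beta$ the functions $u,v$ and the boundary contributions blow up near $0$ and $1$, so one must work on compact subrectangles and afterwards propagate the conclusion to all of $[0,1]$ via the extension theorems, checking throughout that the accumulated error remains a globally bounded function and may thus be absorbed into $b$.
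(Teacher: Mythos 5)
Your opening moves (two-point distributions, extraction of the symmetrized part $F(x)=f(x)+f(1-x)$ and of the candidate constant $c$) are sound for $\alpha,\beta\geq 0$, but the proposal has genuine gaps exactly where the theorem lives, and they are not the route of the proof the paper points to. The central one is the recovery of the additive part. Subtracting the two instances of the hypothesis for the merge of $(s,t,1-s-t,0,\dots,0)$ into $(s+t,1-s-t,0,\dots,0)$ against a fixed $Q$ gives
\begin{multline*}
\Bigl|\sum_{j=1}^{m}\bigl[h(sq_{j})+h(tq_{j})-h((s+t)q_{j})\bigr]-mh(0)\\
-\bigl[h(s)+h(t)-h(s+t)-h(0)\bigr]\sum_{j=1}^{m}q_{j}^{\beta}
-\bigl[s^{\alpha}+t^{\alpha}-(s+t)^{\alpha}\bigr]\sum_{j=1}^{m}h(q_{j})\Bigr|\leq 2\varepsilon ,
\end{multline*}
in which the plain Cauchy difference of $h$ is entangled with the scaled Cauchy differences $h(sq_{j})+h(tq_{j})-h((s+t)q_{j})$. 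Declaring the latter ``bounded on compact subtriangles'' is circular: their boundedness is precisely what you are trying to prove, and combining with the boundedness of $h(x)+h(1-x)$ does not decouple the scales (take $Q$ uniform: you only obtain a self-referential relation between the Cauchy difference at scale $1$ and at scale $1/m$). The correct reading of this very inequality --- and this is the approach indicated in the paper --- is to keep $Q$ fixed and treat $\varphi_{Q}(p)=\sum_{j=1}^{m}f(pq_{j})-f(p)\sum_{j=1}^{m}q_{j}^{\beta}-p^{\alpha}\sum_{j=1}^{m}f(q_{j})$ as a \emph{single} unknown function of $p$: the hypothesis states exactly that $\bigl|\sum_{i=1}^{n}\varphi_{Q}(p_{i})\bigr|\leq\varepsilon$ on $\Gamma_{n}$ with $n\geq 3$, so the merge argument applies verbatim to $\varphi_{Q}$, and the lemma of \cite{Mak94} quoted at the end of this subsection (proved via restricted-domain Cauchy stability) yields $\varphi_{Q}-\varphi_{Q}(0)=A_{Q}+b_{Q}$ with $A_{Q}$ additive and $|b_{Q}|\leq\varepsilon$ uniformly in $Q$. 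The substantive part of the Kocsis--Maksa proof is then disentangling $f$ from this family of identities indexed by $Q$ --- the ``analysis of the methods by which the solutions were found'' mentioned in the text --- and that stage, where the dichotomy $c(p^{\alpha}-p^{\beta})$ versus $p^{\alpha}\ell(p)$ actually arises, is absent from your outline.

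Two further steps would fail as written. In the case $\beta=\alpha\neq 1$, approximate additivity of the functional $P\mapsto\bigl(\sum_{i}f(p_{i})\bigr)/\bigl(\sum_{i}p_{i}^{\alpha}\bigr)$ under $\ast$ is a statement about sums over whole distributions, and your ``i.e.\ $f/(\cdot)^{\alpha}$ is approximately logarithmic'' is a non sequitur: there is no pointwise inequality of the form $|g(tv)-g(t)-g(v)|\leq C\varepsilon$ here, so Case I of the proof of Theorem \ref{T3.1.1} cannot be invoked; worse, the hypothesis holds only for the single fixed pair $(n,m)$, and two-point distributions are not closed under $\ast$, so there is no semigroup on which your quotient is an approximate homomorphism. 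Finally, when $\alpha<0$ or $\beta<0$, the terms $(n-2)f(0)v(y)$ and $(m-2)f(0)u(x)$ that you sweep into $C_{1}$ are unbounded on $]0,1[^{2}$, so all your constants degrade as the compact subrectangle grows (also, the two specializations $y=\frac{1}{2}$, $x=\frac{1}{2}$ then produce the distinct coefficients $F(\frac{1}{2})+(m-2)f(0)$ and $F(\frac{1}{2})+(n-2)f(0)$ for $u$ and $v$, so you do not even get a single $c$ before controlling $f(0)$). Extension theorems of Dar\'oczy--Losonczi type extend exact additive or logarithmic functions from subdomains; they do not propagate error bounds, so the concluding claim that the accumulated error ``remains a globally bounded function'' has no mechanism behind it.
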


If $\varepsilon=0$ then $b=0$ can be chosen here, so the above theorem is of stability type which however does not cover just the
Shannon case $\beta=\alpha=1$.

In case $\alpha\neq 1$ the problem of the stability of equation \eqref{Eq1.2.2} can easily be handled at least whenever both $n$ and $m$
are not less then three. First of all, introducing a new function $g$ by $g(p)=p+(2^{1-\alpha}-1)f(p), \,\, p\in I$, the stability inequality

\begin{equation*}
\left|\sum_{i=1}^{n}\sum_{j=1}^{m}f(p_{i}q_{j})-\sum_{i=1}^{n}f(p_{i})-\sum_{j=1}^{m}f(q_{j})-(2^{1-\alpha}-1)\sum_{i=1}^{n}f(p_{i})\sum_{j=1}^{m}f(q_{j})\right|\leq\varepsilon
\end{equation*}

goes over into

\begin{equation}\label{Eq3.4.1}
\left|\sum_{i=1}^{n}\sum_{j=1}^{m}g(p_{i}q_{j})-\sum_{i=1}^{n}g(p_{i})\sum_{j=1}^{m}g(q_{j})\right|\leq\varepsilon 
\cdot \left|2^{1-\alpha}-1 \right|
\end{equation}

and the following theorem can be proved. (see Maksa \cite{Mak94})

\begin{thmszn}
Let $n\geq 3, m\geq 3,\,\, 0\leq\varepsilon\in\mathbb{R},\,\, g:[0,1]\to\mathbb{R},\,$ and suppose that \eqref{Eq3.4.1} holds
for all $(p_{1}, \ldots, p_{n})\in\Gamma_{n},\, (q_{1}, \ldots, q_{n})\in\Gamma_{m}$. Then e
\[
g(p)=a(p)+m(p)+b(p) \qquad (p\in [0,1])
\]
where $a:\mathbb{R}\to\mathbb{R}$ is an additive, $b:[0,1]\to\mathbb{R}$ is a bounded, and $m:[0,1]\to\mathbb{R}$ is a multiplicative
function, respectively.
\end{thmszn}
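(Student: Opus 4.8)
The plan is to read the hypothesis as a near-multiplicativity statement and then to separate the genuinely multiplicative behaviour from a bounded remainder. Writing $\delta=\varepsilon\left|2^{1-\alpha}-1\right|$, inequality \eqref{Eq3.4.1} says that the \emph{sum functional} $S(P)=\sum_i g(p_i)$ satisfies $\left|S(P\ast Q)-S(P)S(Q)\right|\le\delta$, i.e. $S$ is multiplicative with respect to the product $\ast$ of distributions up to the additive error $\delta$ (the three occurrences of $S$ live on $\Gamma_n,\Gamma_m,\Gamma_{nm}$, so this is only heuristic and must be made rigorous through a one--variable reduction). This already explains the shape of the conclusion: the multiplicative part of $S$ will produce $m$, a bounded defect will produce $b$, and the ``neutral'' part that is invisible to $\ast$ will produce the additive $a$. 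Throughout I would use that zero probabilities are admitted (the domain is $[0,1]$, so $\mathcal{G}_n=\Gamma_n$) and that $n,m\ge 3$, which leaves spare coordinates to manoeuvre.

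First I would reduce the many--variable inequality to statements about one and two real variables. Applying \eqref{Eq3.4.1} to $P=(x,y,1-x-y,0,\dots,0)$ and to $P'=(x+y,1-x-y,0,\dots,0)$ with a common $Q$ and subtracting, the bilinear right--hand side cancels and one obtains a bound of the form $\left|g(x)+g(y)-g(x+y)-g(0)\right|\le C\delta$ on the triangle $\{x,y\ge 0,\ x+y\le 1\}$; here $n\ge 3$ is exactly what supplies the third coordinate. In the same spirit, specialising both $P$ and $Q$ to three nonzero entries yields a two--variable inequality controlling $\sum_{i,j}g(x_iy_j)$ by the product of the two block sums, while the constants $g(0)$ and $g(1)$ are pinned down by feeding in degenerate distributions (all mass on one atom), so that they can afterwards be absorbed into the bounded term.

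With these reductions in hand I would split into two cases according to whether $S$ is bounded on $\Gamma_n$. If $S$ is bounded, the triangle estimate above shows that $g-g(0)$ has a bounded Cauchy difference on a triangle; by the stability theorem of Maksa \cite{Mak80} together with the Dar\'oczy--Losonczi extension theorem \cite{DarLos77} this gives an additive $a:\mathbb{R}\to\mathbb{R}$ with $g-g(0)-a$ bounded on $[0,1]$, that is $g=a+b$ (the multiplicative term being $m\equiv 0$). If $S$ is unbounded, then for each fixed $Q$ the map $t\mapsto\sum_j g(tq_j)$ equals, up to an additive and a bounded term, a scalar multiple $S(Q)\,g(t)$; letting $Q$ vary isolates a one--variable function with bounded multiplicative Cauchy difference, to which Baker's superstability theorem \cite{Bak80} applies and forces it to be \emph{exactly} multiplicative. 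Feeding this back, $g$ becomes an exact solution of \eqref{Eq2.2.4}, whose general solution for $n\ge 3,\,m\ge 2$ is known (Losonczi--Maksa \cite{LosMak82}, Maksa \cite{Mak87}) to be additive plus multiplicative, so $g=a+m$ (now with $b\equiv 0$). In either case $g=a+m+b$, as claimed.

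The main obstacle is the passage, in the unbounded case, from the fixed--$(n,m)$ inequality to a genuine \emph{single} one--variable multiplicative Cauchy difference: because $\ast$ changes the arity, $S$ cannot be fed into Baker's theorem directly, and one must carefully disentangle the additive contribution (which is invisible to the boundedness test $\sum_i H_Q(p_i)=O(\delta)$, where $H_Q(t)=\sum_j g(tq_j)-S(Q)g(t)$) from the multiplicative one before any superstability argument can bite. The accompanying bookkeeping of the constants $g(0),g(1)$ and of the block sums is routine but must be arranged so that the additive and multiplicative pieces come out cleanly separated; here the method of invariant means on the (amenable) multiplicative semigroup could instead be used to average the bounded defect and extract $a$ and $m$ simultaneously, which would streamline this step.
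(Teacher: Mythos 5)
Your central reduction step is false, and a counterexample is provided by an exact solution of the equation itself. Take $g(p)=p^{-1}$ for $0<p\le 1$ and $g(0)=0$ (recall the convention \eqref{Eq1.1.1}). Since the nonzero entries of $P\ast Q$ are exactly the products of nonzero entries, $\sum_{i,j}g(p_iq_j)=\sum_i g(p_i)\sum_j g(q_j)$ holds exactly, so \eqref{Eq3.4.1} is satisfied with $\varepsilon=0$; yet $g(x)+g(y)-g(x+y)-g(0)=x^{-1}+y^{-1}-(x+y)^{-1}$ is unbounded on the triangle, so no estimate of the form $\left|g(x)+g(y)-g(x+y)-g(0)\right|\le C\delta$ can follow from \eqref{Eq3.4.1}. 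The reason your subtraction argument breaks is that the bilinear right-hand side does \emph{not} cancel: applying \eqref{Eq3.4.1} to $P=(x,y,1-x-y,0,\dots,0)$ and $P'=(x+y,1-x-y,0,\dots,0)$ with a common $Q$ and subtracting leaves
\[
\Bigl|\sum_{j=1}^{m}\bigl[g(xq_j)+g(yq_j)-g((x+y)q_j)-g(0)\bigr]-\bigl[g(x)+g(y)-g(x+y)-g(0)\bigr]\sum_{j=1}^{m}g(q_j)\Bigr|\le 2\delta,
\]
in which the Cauchy difference of $g$ appears multiplied by $\sum_j g(q_j)$ and weighed against a sum-form expression; nothing bounded about $g$ alone comes out. (Cancellation of the bilinear term would require $\sum_i g(p_i)=\sum_i g(p'_i)$, which is precisely the Cauchy equation you are trying to establish.) This is exactly why the corner-point tool in the paper is the inequality \eqref{Eq3.4.2}, $\left|\sum_i\varphi(p_i)\right|\le\varepsilon$, which has no bilinear term: the correct use of the spare coordinates is to fix $Q$, set $\varphi_Q(p)=\sum_j g(pq_j)-g(p)\sum_j g(q_j)$, observe that \eqref{Eq3.4.1} says $\left|\sum_i\varphi_Q(p_i)\right|\le\delta$, and apply the theorem on \eqref{Eq3.4.2} to $\varphi_Q$ --- retaining the additive term and the constant in its conclusion, which your claimed cancellation silently discards.

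Second, the unbounded case --- which is the entire content of the theorem, since every multiplicative function is nonnegative and a bounded one can simply be absorbed into $b$ --- is not proved. Baker's superstability theorem \cite{Bak80} concerns a single function with $\left|f(xy)-f(x)f(y)\right|\le\varepsilon$; as you yourself concede, the sum functional $S$ cannot be fed into it (the three sums live on different simplices, and an additive summand of $g$ contributes only the constant $a(1)$ to each sum, so near-multiplicativity of $S$ cannot detect it). Your assertion that unboundedness of $S$ forces $b\equiv 0$, i.e.\ that \eqref{Eq3.4.1} then forces $g$ to solve \eqref{Eq2.2.4} exactly, is the hard structural statement of the theorem; the sketch via $H_Q$ and invariant means names this obstacle but does not remove it. By contrast, your bounded case can be repaired and is essentially trivial: if $\sum_i g(p_i)$ is bounded by $M$ on $\Gamma_n$, then evaluating it at $P$ and $P'$ above gives $\left|g(x)+g(y)-g(x+y)-g(0)\right|\le 2M$ directly (with constant $2M$, not $C\delta$), after which restricted-domain Cauchy stability yields $g=a+b$ with $m\equiv 0$. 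So what your proposal actually proves is only this degenerate half. The route indicated in the paper --- the stability theorem for \eqref{Eq3.4.2} combined with re-running, error terms included, the known solution method of \eqref{Eq2.2.4} for fixed $n\ge 3$, $m\ge 3$ --- is what produces the multiplicative part in the unbounded case, and nothing in your proposal substitutes for it.
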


The corner point in the proofs of these theorems is the following stability result (see \cite{Mak94}).
\begin{thmszn}
Let $n\geq 3, \,\, 0\leq\varepsilon\in\mathbb{R},\,\, \varphi:[0,1]\to\mathbb{R},\,$ and suppose that
\begin{equation}\label{Eq3.4.2}
\left|\sum_{i=1}^{n}\varphi(p_{i})\right|\leq\varepsilon
\end{equation}
holds for all $(p_{1}, \ldots, p_{n})\in\Gamma_{n}$. Then there exist an additive function $A:\mathbb{R}\to\mathbb{R}$
and a function $b:[0,1]\to\mathbb{R}$ such that $b(0)=0, \,\,|b(x)|\leq\varepsilon$ for all $x\in [0,1]$ and
\[
\varphi(p)-\varphi(0)=A(p)+b(p) \qquad  (p\in [0,1]).
\]
\end{thmszn}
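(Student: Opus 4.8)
The plan is to reduce the hypothesis to an approximate Cauchy equation for a normalized function, extract from it a genuine additive function, and only afterwards squeeze the deviation down to $\varepsilon$. First I would set $\psi=\varphi-\varphi(0)$, so that $\psi(0)=0$, and rewrite \eqref{Eq3.4.2}. Since padding a distribution with extra zero coordinates leaves the sum unchanged apart from the fixed constant $\kappa:=n\varphi(0)$, the hypothesis is equivalent to $\left|\kappa+\sum_{j=1}^{k}\psi(q_{j})\right|\le\varepsilon$ for every $2\le k\le n$ and every system of positive $q_{1},\dots,q_{k}$ with $\sum_{j=1}^{k}q_{j}=1$, the case $k=1$ giving $\left|\kappa+\psi(1)\right|\le\varepsilon$.

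Next I would produce the approximate additivity. Applying the bound once to the three-part distribution $(x,y,1-x-y,0,\dots,0)$ and once to the two-part distribution $(x+y,1-x-y,0,\dots,0)$ and subtracting, the constant $\kappa$ and the common term $\psi(1-x-y)$ cancel, which leaves
\[
\left|\psi(x)+\psi(y)-\psi(x+y)\right|\le 2\varepsilon
\]
on the triangle $\{(x,y):x,y\ge 0,\ x+y\le 1\}$. Thus the Cauchy difference of $\psi$ is bounded on a triangle, exactly the situation treated by the restricted-domain stability theory: the method of Maksa \cite{Mak80} (alternatively an invariant mean on the amenable semigroup $(\mathbb{R}_{++},+)$, cf.\ \cite{Day57}) yields a function additive on $[0,1]$ that approximates $\psi$ to within a multiple of $\varepsilon$, and the extension theorem of Dar\'oczy--Losonczi \cite{DarLos77} promotes it to an additive $A\colon\mathbb{R}\to\mathbb{R}$ with $\psi-A$ bounded on $[0,1]$.

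I would then normalize. Since adding a linear function preserves both additivity and the boundedness of the remainder, I may replace $A$ by $A(x)-(\kappa+A(1))x$, after which $A(1)=-\kappa$. Writing $b:=\psi-A$ and using $\sum_{j}A(q_{j})=A\!\left(\sum_{j}q_{j}\right)=A(1)=-\kappa$ on every full partition of $1$, the reformulated hypothesis collapses to $\left|\sum_{j=1}^{k}b(q_{j})\right|\le\varepsilon$ for every partition of $1$ into at most $n$ positive parts, while $b(0)=0$ and $b$ is (so far only) bounded; as $A$ is additive, $b$ inherits the same bounded Cauchy difference as $\psi$. In particular the uniform partitions give $\left|b(1/m)\right|\le\varepsilon/m$ for $2\le m\le n$.

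The final and hardest step is to upgrade this to the sharp estimate $\left|b(x)\right|\le\varepsilon$. The idea is that once the \emph{entire} additive part of $\psi$ has been absorbed into $A$, the residual $b$ is pinned down by the single-distribution bounds, and one tries to realize $b(p)$ itself as such a sum with the remaining terms controlled. I expect the main obstacle to sit precisely here: the Cauchy-difference route only delivers $2\varepsilon$, and the restriction to at most $n$ summands forbids the naive device of splitting $1-p$ into arbitrarily many vanishing parts, so obtaining the clean constant $\varepsilon$ requires combining the bounded Cauchy difference of $b$ with a careful choice of test partitions (and the smallness of $b$ at the points $1/m$) rather than a limiting argument. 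Granting this, $b(0)=0$ and $\left|b(x)\right|\le\varepsilon$ hold, and $\varphi(p)-\varphi(0)=A(p)+b(p)$ is the asserted decomposition.
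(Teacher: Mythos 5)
You follow the paper's own route exactly: the paper, too, pads distributions with zeros, applies the hypothesis to $(x,y,1-x-y,0,\dots,0)$ and to $(x+y,1-x-y,0,\dots,0)$, obtains $\left|\varphi(x+y)-\varphi(x)-\varphi(y)+\varphi(0)\right|\leq 2\varepsilon$ on the triangle, and then appeals to stability theorems for the Cauchy equation on restricted domains (Skof and Tabor--Tabor, where you appeal to Maksa's method, invariant means, and the Dar\'oczy--Losonczi extension theorem) to produce the additive function $A$. Your reformulation with $\kappa=n\varphi(0)$, the derivation of the approximate Cauchy inequality, the extraction of $A$ with $\psi-A$ bounded by a multiple of $\varepsilon$, and the normalization $A(1)=-\kappa$ are all correct and faithful to the paper's argument.

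The step you flagged and left open --- forcing $|b(x)|\leq\varepsilon$ with the sharp constant --- is a genuine gap, but it is not one you could have closed: with the constant $\varepsilon$ the statement is false, already for $n=3$. Let $\varepsilon>0$, put $\varphi(0)=\varphi(1)=0$ and $\varphi(p)=(2-5p)\varepsilon$ for $p\in\,]0,1[\,$. For $(p_1,p_2,p_3)\in\Gamma_3$ one has $\sum_{i=1}^{3}\varphi(p_i)=\varepsilon$ if all $p_i$ are positive, $-\varepsilon$ if exactly one $p_i$ is zero, and $0$ if some $p_i=1$, so the hypothesis holds. Now suppose $\varphi-\varphi(0)=\varphi=A+b$ with $A$ additive and $|b|\leq\varepsilon$ on $[0,1]$. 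Since $\varphi$ and $b$ are bounded, $A$ is bounded on $[0,1]$, hence, being additive and bounded on a set of positive measure (as recalled in Subsection 1.3), $A(x)=cx$ for some $c\in\mathbb{R}$. But then $b(p)=(2-5p)\varepsilon-cp\to 2\varepsilon$ as $p\to 0+$, whatever $c$ is, so $\sup_{[0,1]}|b|\geq 2\varepsilon$, a contradiction; in this example no decomposition does better than $|b|\leq 2\varepsilon$ (attained with $c=-2\varepsilon$), while for larger fixed $n$ the analogous construction only yields a deviation $2\varepsilon/(n-2)$, which is why the defect is invisible except at $n=3$. This also shows that the paper's own sketch cannot deliver its stated constant: the Cauchy difference is controlled only by $2\varepsilon$, so any restricted-domain stability theorem returns a bound of at least $2\varepsilon$. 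In short, your argument proves the theorem with $|b(x)|\leq C\varepsilon$ for some absolute constant $C$ --- which is all the cited restricted-domain results can give --- the final sharpening you attempted is impossible, and the discrepancy lies in the printed statement rather than in your approach.
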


By an argument similar to that we used in the subsection on sum form equations in connection with
the inequality \eqref{Eq2.2.3}, inequality \eqref{Eq3.4.2} and the triangle inequality imply that
\[
|\varphi(x+y)-\varphi(x)-\varphi(y)+\varphi(0)|\leq 2\varepsilon,
\]
that is, the classical stability inequality holds for the function $\varphi-\varphi(0)$ on the
restricted domain $\{(x,y)\in\mathbb{R}^{2}\, \vert \,  x,y, x+y \in [0,1]\}$. Therefore results
(see Skof \cite{Sko83}, Tabor and Tabor \cite{TabTab98}) on the stability of the Cauchy equation on restricted domain
can be applied to finish the proof of the above theorem.

We remark that the other basic tool for proving stability results for sum form equations was the analysis of the methods
with the help of which the solutions of these equations were found. These and similar ideas proved to be fruitful in the investigations 
of the the stability of the sum form equations on open domain (excluding zero probabilities) and also of the several variable case.
(See Kocsis \cite{Koc00}, \cite{Koc01}, \cite{Koc04}.)

%\bibliography{stability_bib}
%\bibliographystyle{amsplain}

\providecommand{\bysame}{\leavevmode\hbox to3em{\hrulefill}\thinspace}
\providecommand{\MR}{\relax\ifhmode\unskip\space\fi MR }
% \MRhref is called by the amsart/book/proc definition of \MR.
\providecommand{\MRhref}[2]{%
  \href{http://www.ams.org/mathscinet-getitem?mr=#1}{#2}
}
\providecommand{\href}[2]{#2}

\end{document}